\numberwithin{equation}{section}
\newtheorem{lemma}{Lemma}
\newtheorem{theorem}{Theorem}
\newtheorem{corollary}{Corollary}
\newtheorem{remark}{Remark}
\newtheorem{example}{Example}
\begin{document}

\date{}


\begin{center}
\LARGE \bf Content
\end{center}

\vspace{0.5cm}

\noindent This report includes the original manuscript (pp.~2--31) and the supplementary material (pp.~32--38) of ``Realization of Biquadratic Impedance as Five-Element Bridge Networks''.

\vspace{1cm}

\noindent Authors: Michael Z. Q. Chen, Kai Wang, Chanying Li, and Guanrong Chen

\newpage

\title{\LARGE \bf Realization of Biquadratic Impedances as Five-Element Bridge Networks}
\date{}

\author{Michael Z. Q. Chen,~\IEEEmembership{Senior Member,~IEEE,}
        Kai Wang,\\
        Chanying Li,~\IEEEmembership{Member,~IEEE,}
        and~Guanrong Chen,~\IEEEmembership{Fellow,~IEEE}
\thanks{M.~Z.~Q. Chen and K. Wang are with Department of Mechanical Engineering, The University of Hong Kong, Pokfulam
Road, Hong Kong.}
\thanks{C. Li is with
Academy of Mathematics and Systems Science, Chinese Academy of Sciences, Beijing, P. R. China.}
\thanks{G. Chen is with Department of Electronic Engineering, City University of Hong Kong, Tat Chee Avenue, Kowloon, Hong Kong.}
\thanks{Correspondence: MZQ Chen, mzqchen@hku.hk.}
\thanks{This research was partially supported by the Research Grants
Council, Hong Kong, through the General Research Fund
under Grant 17200914,  Grant CityU 11201414,
and the Natural Science Foundation of China under Grant 61374053 and 61422308.}
}


\maketitle

\begin{abstract}
This paper studies the passive network synthesis problem of biquadratic impedances as   five-element bridge networks.
By investigating the realizability conditions of the configurations that can cover all the possible cases, a necessary and sufficient condition is obtained for   biquadratic impedances to be realizable as   two-reactive five-element bridge networks. Based on the types of the two reactive elements, the discussion is divided into two parts, and a canonical form for biquadratic impedances is utilized to simplify and combine the conditions. Moreover, the realizability result for the biquadratic impedance is extended to the general five-element bridge networks. Some numerical examples are presented for illustration.

\medskip

\noindent{\em Keywords:}
Network synthesis, passivity, biquadratic impedance, bridge network.
\end{abstract}


\section{Introduction} \label{sec: introduction}

Passive network synthesis has been one of the most important subjects in circuit and system theories. This field has experienced a ``golden era'' from the 1930s to the 1970s \cite{AV73,BD49,Gui57,SR61}.
The most classical transformerless realization method, the Bott-Duffin procedure \cite{BD49}, shows that any positive-real impedance (resp. admittance) is realizable with a finite number of resistors, capacitors, and inductors. However, the resulting networks typically contain a large number of redundant elements, and the minimal realization problem is far from being solved even today for the biquadratic impedance.
Recently, interest in investigation on passive networks has revived \cite{CPSWS09,CS09,CS09(2),CWSL13,CWZC15,JS11,JS12,Smi02,WC12,WCH14,WC15,YKKP14}, due to its connection with passive mechanical control using a new mechanical element named inerter \cite{CHHC14,HCSH15,Smi02}.
In parallel, there are also some new results on the negative imaginary systems \cite{XPL12}.
In addition to the control of mechanical systems, synthesis of passive networks
can also be applied to a series of other fields, such as the microwave antenna circuit design \cite{LBHD11},
filter design \cite{RMB14}, passivity-preserving balanced truncation \cite{RS10}, and biometric image processing \cite{Sae14}. Noticeably, the need for a renewed attempt to passive network synthesis and its contribution to systems theory has been highlighted in \cite{Kal10}.

The realization problem of biquadratic impedances has been a focal  topic in the theory of passive network synthesis \cite{Lad48,Lad64,Rei69,WC12,WCH14}, yet its minimal realization problem has not  been completely  solved to date. In fact, investigation on synthesis of biquadratic impedances can provide significant guidance on  realization  of more general functions. By the Bott-Duffin procedure \cite{BD49} and Pantell's simplification \cite{Pan54}, one needs at most eight elements to realize a general positive-real biquadratic impedance. In \cite{Lad48}, Ladenheim listed $108$ configurations containing
at most five elements and at most two reactive elements that can realize the biquadratic impedance. For each of them, values of the elements are explicitly expressed in terms of the coefficients of the biquadratic function, without any derivation given. Realizability conditions of series-parallel networks are listed, but
those of bridge networks are not.
Furthermore, the realizability problem of biquadratic impedances as five-element networks containing three reactive elements \cite{Lad64}  has been investigated.
Recently, a new concept named regularity is introduced
and applied  to investigate the realization problem of the biquadratic impedances as five-element networks in \cite{JS11}, in which a necessary and sufficient condition  is derived for a biquadratic impedance to be realizable as such a network.
It is noted that only the realizability conditions for five-element bridge networks that are not necessarily equivalent to the corresponding series-parallel ones are investigated in \cite{JS11}. Hence,  necessary and sufficient conditions for the biquadratic impedances to be realizable as   five-element bridge networks are still unknown today.

The present paper is concerned with the realization   of   biquadratic impedances as five-element bridge networks. As discussed above, this problem remains unsolved today.
Pantell's simplification \cite{Pan54} shows that non-series-parallel networks may often contain less redundancy.
Besides, the non-series-parallel structure sometimes has its own advantages in practice \cite{CHD12}. It is essential to construct a  five-element bridge network, the simplest non-series-parallel network to solve the minimal realization problem of biquadratic impedances.
This paper focuses on deriving some realizability conditions of biquadratic impedances as two-reactive five-element bridge networks.
Based on these and some previous results in \cite{Lad64}, the realization result of five-element bridge networks without limiting the number of reactive elements will follow. The discussion on realization of two-reactive five-element bridge networks is divided into two parts, based on whether the two reactive elements are of the same type or not. Through investigating realizability conditions for configurations that can cover all the possible cases, a necessary and sufficient condition is obtained for a biquadratic impedance to be realizable as a two-reactive five-element bridge network. A canonical form for biquadratic impedances is   utilized to simplify and combine the conditions.
Furthermore, the corresponding result of   general five-element bridge networks is further obtained.
Throughout, it is assumed that the given biquadratic impedance is realizable with at least five elements. A part of this paper has appeared as a conference paper in Chinese \cite{CWLC14} (Section V-C and a part of contents in Sections~III and IV).

\section{Preliminaries}  \label{sec: Preliminaries}

A real-rational function $H(s)$ is \textit{positive-real} if $H(s)$ is analytic and $\Re(H(s)) \geq 0$ for $\Re(s) > 0$ \cite{Gui57}. An \textit{impedance} $Z(s)$ is defined as $Z(s) = V(s)/I(s)$, and an \textit{admittance} is $Z^{-1}(s)$, where $V(s)$ and $I(s)$ denote the voltage and current, respectively.
A linear one-port time-invariant network is passive if and only if its impedance (resp. admittance) is positive-real, and any positive-real function is realizable as the impedance (resp. admittance) of  a one-port network  consisting of a finite number of resistors, capacitors, and inductors \cite{BD49,Gui57}, thus the network \textit{realizes} (or being a \textit{realization} of) its impedance (resp. admittance). A \textit{regular} function $H(s)$ is a class of positive-real functions with the smallest value of $\Re(H(j\omega))$ or $\Re(H^{-1}(j\omega))$ being at $\omega = 0 \cup \infty$ \cite{JS11}. The capacitors and inductors are called \textit{reactive elements}, and resistors are called \textit{resistive elements}.
Moreover, the concept of the \textit{network duality} is presented in \cite{CWLC15}.

\section{Problem Formulation}  \label{sec: Problem formulation}

The general form of a \textit{biquadratic impedance} is
\begin{equation}   \label{eq: biquadratic impedance}
Z(s) = \frac{a_2 s^2 + a_1 s + a_0}{b_2 s^2 + b_1 s + b_0},
\end{equation}
where $a_2$, $a_1$, $a_0$, $b_2$, $b_1$, $b_0$ $\geq 0$. It is known from \cite{CS09(2)} that its positive-realness is equivalent to $(\sqrt{a_0b_2}-\sqrt{a_2b_0})^2 \leq a_1b_1$. For brevity, the following notations are introduced:
$\mathds{A} = a_0b_1 - a_1b_0$, $\mathds{B} = a_0b_2 - a_2b_0$, $\mathds{C} = a_1b_2 - a_2b_1$,
$\mathds{D}_a := a_1 \mathds{A} - a_0 \mathds{B}$,
$\mathds{D}_b := -b_1 \mathds{A} + b_0 \mathds{B}$,
$\mathds{E}_a := a_2 \mathds{B} - a_1 \mathds{C}$,
$\mathds{E}_b := -b_2 \mathds{B} + b_1 \mathds{C}$,
$\mathds{M} := a_0 b_2 + a_2 b_0$,
$\Delta_a  := a_1^2-4a_0a_2$,
$\Delta_b  := b_1^2-4b_0b_2$,
$\Delta_{ab}  := a_1b_1 - 2\mathds{M}$,
$\mathds{R} := \mathds{A} \mathds{C} - \mathds{B}^2$,
$\Gamma_a   := \mathds{R} + b_0b_2 \Delta_a$, and
$\Gamma_b  :=  \mathds{R} + a_0a_2 \Delta_b$.



As shown in \cite{JS11},  if at least one of  $a_2$, $a_1$, $a_0$, $b_2$, $b_1$, and $b_0$ is zero, then $Z(s)$ is realizable with at most two reactive elements and two resistors.
In \cite{WCH14}, a necessary and sufficient condition for a biquadratic impedance with positive coefficients to be realizable with at most four elements was established, as below.
\begin{lemma} \cite{WCH14}  \label{lemma: condition of at most four}
{A biquadratic impedance $Z(s)$ in the form of \eqref{eq: biquadratic
impedance}, where $a_2$, $a_1$, $a_0$, $b_2$, $b_1$, $b_0$ $> 0$, can be realized with at most four elements if and only if at least one of the following conditions  holds: 1) $\mathds{R} = 0$; 2) $\mathds{B} = 0$; 3) $\mathds{B} > 0$ and $\mathds{D}_a \mathds{E}_b = 0$; 4) $\mathds{B} < 0$ and $\mathds{D}_b \mathds{E}_a = 0$; 5) $\Gamma_a \Gamma_b = 0$. }
\end{lemma}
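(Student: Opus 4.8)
The plan is to reduce both directions to a finite exhaustion over network topologies, trimming the casework with the natural symmetries of biquadratics. Two reductions come first. Since a Wheatstone bridge needs five edges, every one-port with at most four elements is series-parallel, so only series-parallel $RLC$ networks enter the analysis. Second, one checks that $\mathds{A},\mathds{B},\mathds{C}$ are, up to sign, the three $2\times2$ minors of $\left(\begin{smallmatrix} a_2 & a_1 & a_0 \\ b_2 & b_1 & b_0\end{smallmatrix}\right)$, so that $\mathds{R}=\mathds{A}\mathds{C}-\mathds{B}^2$ is, up to sign, the Sylvester resultant of the numerator and denominator; hence condition 1), $\mathds{R}=0$, holds exactly when they share a root, i.e. when $Z(s)$ collapses to a bilinear function, which is realizable with at most three elements. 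This disposes of condition 1) and lets us assume $\mathds{R}\neq 0$ in all that follows, so that $Z$ has McMillan degree exactly two and any four-element realization must contain at least two reactive elements; the enumeration can then be organized by the number and type of the reactive elements.

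For sufficiency I would take each remaining condition in turn and exhibit one explicit four-element series-parallel network with closed-form, non-negative element values realizing $Z(s)$. Condition 2), $\mathds{B}=0$, means $Z(0)=Z(\infty)$ and matches a symmetric configuration. Conditions 3) and 4) are interchanged by the frequency-reciprocal substitution $s\mapsto 1/s$, which sends $a_0\leftrightarrow a_2$, $b_0\leftrightarrow b_2$, hence $\mathds{B}\mapsto-\mathds{B}$ and $\mathds{D}_a\leftrightarrow\mathds{E}_a$, $\mathds{D}_b\leftrightarrow\mathds{E}_b$; and the two factors of condition 5) are interchanged by impedance-admittance duality $Z\mapsto Z^{-1}$, which swaps the $a$'s and $b$'s and hence $\Gamma_a\leftrightarrow\Gamma_b$. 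It is therefore enough to realize one representative of each symmetry class and transport it by these maps. This direction is pure verification: substitute the proposed values, expand $Z(s)$, and confirm the coefficients are proportional to $a_2,\dots,b_0$; the sign hypotheses $\mathds{B}>0$ and $\mathds{B}<0$ in 3)--4) and the positive-realness inequality $(\sqrt{a_0b_2}-\sqrt{a_2b_0})^2\le a_1b_1$ are precisely what force the element values to be non-negative.

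For necessity I would, for each admissible two-reactive four-element topology (the reactive pair being of type $LL$, $LC$, or $CC$), compute $Z(s)$ in terms of the element values, read off the induced $a_i,b_i$, and eliminate the element values to obtain the single algebraic relation they impose in coefficient space. The decisive step is to rewrite each such relation through the invariants $\mathds{A},\mathds{B},\mathds{C},\mathds{R},\Delta_a,\Delta_b,\Gamma_a,\Gamma_b,\mathds{D}_a,\mathds{D}_b,\mathds{E}_a,\mathds{E}_b$, using the identities linking them (starting from $\mathds{R}=\mathds{A}\mathds{C}-\mathds{B}^2$ and the expressions of $\mathds{D}_a,\mathds{E}_a,\mathds{D}_b,\mathds{E}_b$ in terms of $\mathds{A},\mathds{B},\mathds{C}$), so that each configuration is shown to force one of $\mathds{B}=0$, $\Gamma_a\Gamma_b=0$, or $\mathds{D}_a\mathds{E}_b=0$ (resp. $\mathds{D}_b\mathds{E}_a=0$), with the sign of $\mathds{B}$ inherited from the positivity of the element values sorting this last family into the correct branch 3) or 4). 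Duality and frequency inversion roughly quarter the enumeration by identifying each topology with its dual and its reciprocal.

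The main obstacle is the necessity direction, on two counts. First, \emph{exhaustiveness}: one must be certain the catalogue of four-element series-parallel networks is complete, covering not only the two-reactive families but also the degenerate distributions (three reactive and one resistor, or a redundant/cancelling element) and verifying that each of these either reproduces a two-reactive realization or again satisfies one of 1)--5) --- the three-reactive, constant-resistance networks being the subtle case. Second, \emph{bookkeeping}: the per-configuration relations emerge as bulky polynomials in the coefficients, and collapsing a dozen or so of them onto the five invariant conditions while carrying the positivity constraints throughout --- so that the outcome is a genuine equivalence and not merely one inclusion --- is where the real labor lies.
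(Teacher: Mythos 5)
This lemma is imported verbatim from \cite{WCH14} and the paper offers no proof of it, so there is no in-paper argument to compare against; judged on its own, your plan is sound and follows essentially the same route as the cited reference: enumerate the four-element series-parallel topologies (organized by the number and type of reactive elements, with the $\mathds{R}=0$ resultant case and the three-reactive one-resistor case, which forces $Z(0)=Z(\infty)$ and hence $\mathds{B}=0$, split off first), derive each topology's realizability relation in the invariants, and use frequency inversion and duality to cut the casework. Your symmetry bookkeeping ($\mathds{D}_a\leftrightarrow\mathds{E}_a$, $\mathds{D}_b\leftrightarrow\mathds{E}_b$ under $s\mapsto 1/s$ and $\Gamma_a\leftrightarrow\Gamma_b$ under $a_i\leftrightarrow b_i$) checks out, so the remaining work is exactly the exhaustive verification you identify, not a missing idea.
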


Therefore, when investigating the realizability problem of five-element networks, it suffices to assume that $a_2$, $a_1$, $a_0$, $b_2$, $b_1$, $b_0$ $> 0$ but the condition of Lemma~\ref{lemma: condition of at most four} does not hold in the consideration of minimal realizations. For brevity, the set of all such biquadratic functions (with positive coefficients and with the condition of Lemma~\ref{lemma: condition of at most four} not being  satisfied) is denoted by $\mathcal{Z}_b$ in this paper.

The present paper aims to  derive a necessary and sufficient condition for a biquadratic impedance $Z(s) \in \mathcal{Z}_b$ to be realizable as a two-reactive five-element bridge network (Theorem~\ref{theorem: final condition}) and the corresponding result for a general five-element bridge network (Theorem~\ref{theorem: final corollary}). Figs.~\ref{fig: Quartet-01-same-kind}--\ref{fig: Quartet-03-different-kind} and \ref{fig: Three-Reactive-Quartet} are the corresponding realizations.
The configurations are assumed to be passive one-port  time-invariant transformerless networks containing at most three kinds of passive elements, which are resistors, capacitors, and inductors, and the values of the elements are all positive and finite.

\section{A Canonical Biquadratic Form}
\label{sec: A Canonical Biquadratic Form}

A canonical form $Z_c(s)$ for biquadratic impedances stated in is expressed as
\begin{equation}  \label{eq: canonical form}
Z_c(s) = \frac{s^2 + 2U\sqrt{W}s + W}{s^2 + (2V/\sqrt{W})s + 1/W},
\end{equation}
where
\begin{equation}  \label{eq: from Zc to Z}
W = \sqrt{\frac{a_0b_2}{a_2b_0}}, ~~~ U = \frac{a_1}{2\sqrt{a_0a_2}}, ~~~ V = \frac{b_1}{2\sqrt{b_0b_2}}.
\end{equation}
It is not difficult to verify that $Z_c(s)$ can be obtained from $Z(s)$ through
$Z_c(s) = \alpha Z(\beta s)$, where $\alpha = b_2/a_2$ and
$\beta = \sqrt[4]{a_0b_0/(a_2b_2)}$. If $Z(s)$ is realizable as a network $N$, then the corresponding $Z_c(s)$ must be realizable as another network $N_c$ with the same one-terminal-pair labeled graph by a proper transformation of the element values, and \textit{vice versa}.
Therefore, the realizability condition for $Z_c(s)$ as a network whose one-terminal-pair labeled graph is $\mathcal{N}$ in terms of $U$, $V$, $W$ $> 0$ can be determined from that of $Z(s)$ in terms of $a_2$, $a_1$, $a_0$, $b_2$, $b_1$, $b_0$ $> 0$, via transformation
\begin{equation}  \label{eq: from Z to Zc}
\begin{split}
a_2 = 1, \ a_1 = 2U\sqrt{W},  \ a_0 = W,   \\
b_2 = 1, \ b_1 = 2V/\sqrt{W}, \ b_0  = 1/W.
\end{split}
\end{equation}
Conversely, the realizability condition for $Z(s)$ as a network with one-terminal-pair labeled graph  $\mathcal{N}$ in terms of $a_2$, $a_1$, $a_0$, $b_2$, $b_1$, $b_0$ $> 0$ can be determined from that for $Z_c(s)$ in terms of $U$, $V$, $W$ $> 0$, via transformation \eqref{eq: from Zc to Z}.
Furthermore, through \eqref{eq: from Z to Zc}, one concludes that $Z_c(s)$ is positive-real if and only if $\sigma_c := 4UV + 2 - (W + W^{-1}) \geq 0$, as stated in \cite{JS11}. Notations $\Delta_{ab}$, $\mathds{R}$, $\Gamma_a$, and $\Gamma_b$, as defined in Section~\ref{sec: Problem formulation} are respectively converted to
$\Delta_{ab_c} := 4 U V - 2 (W + W^{-1})$,
$\mathds{R}_c := -4 U^2 - 4 V^2 + 4 U V (W + W^{-1}) - (W - W^{-1})^2$, $\Gamma_{a_c} := - 4 V^2 + 4 U V (W + W^{-1}) - (W + W^{-1})^2$, and $\Gamma_{b_c} := - 4 U^2 + 4 U V (W + W^{-1}) - (W + W^{-1})^2$.  Also, $\mathds{M} \mathds{R} + 2a_0a_2b_0b_2\Delta_{ab}$ is converted to $-(W + W^{-1})^3 + 4 U V (W + W^{-1})^2 - 4(U^2 + V^2) (W + W^{-1}) + 8 U V$.
Moreover, for brevity, denote $\lambda_c := 4 U V - 4 V^2 W + (W - W^{-1})$.
Defining $\rho^{\ast}(U,V,W)=\rho(U,V,W^{-1})$ and $\rho^{\dag}(U,V,W)=\rho(V,U,W)$ for any rational function $\rho(U,V,W)$, one can see that
$\lambda_c^{\ast\dag}W$, $\lambda_c/W$, $\lambda_c^{\dag}$, and $\lambda_c^{\ast}$
correspond to
$\mathds{D}_a$, $\mathds{D}_b$, $\mathds{E}_a$, $\mathds{E}_b$,
respectively, through
\eqref{eq: from Z to Zc}. Besides, by denoting  $\eta_{c} := 4 U^2 + 4 V^2 + 4 U V (3 W - W^{-1}) + (W - W^{-1})(9W - W^{-1})$ and $\zeta_{c} :=-4 U^2 - 4 V^2 + 4 U V (W + W^{-1}) - (W - W^{-1})(3W - W^{-1})$,  corresponding to $(-\mathds{R} + 4a_0b_2(a_1b_1+2\mathds{B}))$ and $(\mathds{R} - 2a_0b_2\mathds{B})$, respectively,  one has $\eta_{c}^{\ast} = \eta_{c}^{\ast\dag}$ and $\zeta_{c}^{\ast} = \zeta_{c}^{\ast\dag}$ corresponding to $(-\mathds{R} + 4a_2b_0(a_1b_1 - 2\mathds{B}))$ and $(\mathds{R} + 2a_2b_0\mathds{B})$, respectively.

Denote $\mathcal{Z}_{b_c}$ as the set of biquadratic functions in the form of \eqref{eq: canonical form}, where the coefficients $U$, $V$, $W$ $> 0$
and they do not satisfy the condition of Lemma~\ref{lemma: condition of at most four} transformed through \eqref{eq: from Z to Zc}. It is clear that $Z(s) \in \mathcal{Z}_b$ if and only if $Z_c(s) \in \mathcal{Z}_{b_c}$.

In this paper, the canonical biquadratic form as in
\eqref{eq: canonical form} is introduced so as to further simplify the realizability conditions
of \eqref{eq: biquadratic impedance} (in the proof of Theorems~2, 4, and 6).

\section{Main Results}  \label{sec: main results}

Section~\ref{subsec: Preliminaries} presents some basic lemmas that will be used in the following discussions. Section~\ref{subsec: same type} investigates the realization   of   biquadratic impedances as a five-element bridge network containing two reactive elements of the same type.
In Section~\ref{subsec: different type}, the realization problem of biquadratic impedances as a five-element bridge network containing one
inductor and one capacitor is investigated.
Section~\ref{subsec: summary and notes} presents the final results (Theorems~\ref{theorem: final condition} and \ref{theorem: final corollary}).

\subsection{Basic Lemmas} \label{subsec: Preliminaries}


\begin{lemma} \cite{HS12}   \label{lemma: type of elements}
{If a biquadratic impedance $Z(s) \in \mathcal{Z}_b$ is realizable
with two reactive elements of different types and an arbitrary number of resistors, then $\mathds{R} < 0$. If a biquadratic impedance $Z(s) \in \mathcal{Z}_b$ is realizable with two reactive elements of the same type and an arbitrary number of resistors, then $\mathds{R} > 0$.}
\end{lemma}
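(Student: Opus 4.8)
The plan is to translate the statement into a question about the relative location of the roots of the numerator $p(s)=a_2s^2+a_1s+a_0$ and the denominator $q(s)=b_2s^2+b_1s+b_0$ of $Z(s)$. The starting observation is that $\mathds{R}$ is, up to sign, the resultant of $p$ and $q$: expanding the Sylvester determinant gives $\mathrm{Res}(p,q)=\mathds{B}^2-\mathds{A}\mathds{C}=-\mathds{R}$. Since $Z\in\mathcal{Z}_b$ the condition of Lemma~\ref{lemma: condition of at most four} fails, so in particular $\mathds{R}\neq0$; hence $p$ and $q$ are coprime and $Z$ is genuinely of degree two. As all coefficients are positive, both $p$ and $q$ are Hurwitz, so each has either two negative real roots or a conjugate complex pair in the open left half-plane. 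Writing $\mathrm{Res}(p,q)=a_2^2b_2^2\prod_{i,j}(\alpha_i-\beta_j)$ with $\alpha_i,\beta_j$ the roots of $p,q$, a short case check on these root patterns yields the dictionary I would use throughout: $\mathds{R}>0$ exactly when all four roots are real and the poles and zeros interlace on the negative real axis, and $\mathds{R}<0$ in every remaining case (a complex pair in $p$ or in $q$, or real but non-interlacing roots). Both halves of the lemma then reduce to identifying the root pattern forced by the element types.

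For the same-type case I would argue as follows. If the two reactive elements are both inductors the one-port is an RL network, and if both capacitors an RC network; the two situations are exchanged by the substitution $s\mapsto 1/s$, under which $\mathds{R}$ is invariant while inductors and capacitors swap, so it suffices to treat one of them. By the classical Foster--Cauer theory of RL/RC driving-point functions, the poles and zeros of such an impedance are simple, lie on the negative real axis, and interlace. Since $Z$ is genuinely biquadratic this produces exactly two interlacing real poles and zeros, which by the dictionary above forces $\mathds{R}>0$.

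The different-type case is where the real work lies, and it is the step I expect to be the main obstacle. By the dictionary it is equivalent to showing that a genuine biquadratic realizable with one inductor, one capacitor, and resistors can never have real, interlacing poles and zeros. A reactance-sign argument alone is not enough: one computes $\Im Z(j\omega)\propto-\omega(\mathds{A}+\mathds{C}\omega^2)$, which only detects the sign of $\mathds{A}\mathds{C}$, whereas $\mathds{R}<0$ also carries the $\mathds{B}^2$ term. The route I would take is to model the network as a reciprocal resistive three-port with two ports terminated by $z_1=Ls$ and $z_2=1/(Cs)$, so that $Z$ is a bilinear function of $z_1$ and $z_2$. The decisive structural feature is that for different types the product $z_1z_2=L/C$ is a positive constant $k$; substituting this and clearing denominators expresses $a_2,a_1,a_0$ and $b_2,b_1,b_0$ affinely in $k$, and a computation then reduces $\mathrm{sign}(\mathds{R})$ to the sign of a single quadratic $F(k)$ in $k$.

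The final step, and the crux, is to show $F(k)<0$ for every admissible $k>0$. For this I would invoke the sign and paramountcy (Plücker-type) relations satisfied by the bilinear coefficients of a positive, reciprocal resistive multiport; reciprocity makes the relevant transfer coefficients symmetric and, I expect, collapses $F(k)$ into a negative-definite combination, essentially a negated perfect square, giving $\mathds{R}<0$. Pinning down exactly which relations are needed, and verifying that they force $F$ negative on the whole admissible range rather than only generically, is the delicate part. As a consistency check, the two extreme sub-cases behave correctly: whenever $\mathds{A}$ and $\mathds{C}$ have opposite signs the reactance already changes sign on $(0,\infty)$ and $\mathds{R}<0$ is immediate, so the genuine difficulty is confined to the regime $0\le\mathds{A}\mathds{C}<\mathds{B}^2$, where the poles or zeros are necessarily complex.
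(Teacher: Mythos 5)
The paper offers no proof of this lemma at all---it is imported verbatim from \cite{HS12}---so there is no in-paper argument to compare against, and your proposal must be judged on its own terms. Your reduction of the statement to root configurations is sound: the identity $\mathrm{Res}(p,q)=\mathds{B}^2-\mathds{A}\mathds{C}=-\mathds{R}$ is correct, and so is the resulting dictionary ($\mathds{R}>0$ precisely when the four roots are real, negative and strictly alternating; $\mathds{R}<0$ in every other case, the strictness being guaranteed because $Z(s)\in\mathcal{Z}_b$ excludes condition 1) of Lemma~\ref{lemma: condition of at most four}, so $\mathds{R}\neq 0$). On that basis the same-type half is complete: a degree-two RL or RC impedance has simple, interlacing negative-real poles and zeros by Foster--Cauer theory, and the $s\mapsto s^{-1}$ symmetry, which fixes $\mathds{R}$ while exchanging the two cases, lets you treat only one of them. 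That half I would accept as a proof.

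The different-type half, however, contains a genuine gap, and you have located it yourself. After extracting the inductor and capacitor as ports of a reciprocal resistive three-port and using $z_1z_2=L/C=k$, everything reduces to showing that a certain quadratic $F(k)$ is negative for every admissible choice of the three-port and of $k>0$; that inequality is never established. This is not a routine verification that can be waved through: the entire content of the assertion is concentrated in exactly this step, since the sub-case $\mathds{A}\mathds{C}<0$ is trivial and the remaining regime is where reciprocity and the paramountcy-type constraints on the bilinear coefficients must actually be exploited. You say you ``expect'' these relations to collapse $F(k)$ into a negated perfect square, but you neither identify which relations are required nor verify that they hold uniformly (including for resistive three-ports lacking an impedance matrix, which forces a hybrid-parameter treatment rather than the $Z$-matrix Schur complement you implicitly use). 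Until that computation is carried out, the second statement of Lemma~\ref{lemma: type of elements} remains unproved; the options are to complete the algebra or to do what the paper does and cite \cite{HS12}.
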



Let $\mathcal{P}(a,a')$ denote the \textit{path} (see \cite[pg.~14]{SR61}) whose \textit{terminal vertices} (see \cite[pg.~14]{SR61}) are $a$ and $a'$ \cite{CWSL13}; let $\mathcal{C}(a,a')$ denote the \textit{cut-set} (see \cite[pg.~28]{SR61}) that separates $\mathcal{N}$ into two connected subgraphs $\mathcal{N}_1$ and $\mathcal{N}_2$ containing $a$ and $a'$, respectively \cite{CWSL13}.

\begin{lemma} \cite{WCH14}  \label{lemma: topological structure}
{For a network with two terminals $a$ and $a'$ that realizes a biquadratic impedance $Z(s) \in \mathcal{Z}_b$, its network graph can neither contain  the  path $\mathcal{P}(a,a')$ nor contain the cut-set $\mathcal{C}(a,a')$ whose edges correspond to only one kind of reactive elements.}
\end{lemma}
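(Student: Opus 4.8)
The plan is to exploit the degenerate behaviour of a single kind of reactive element at the two extreme frequencies $s = 0$ and $s = \infty$, together with the fact that for $Z(s) \in \mathcal{Z}_b$ all six coefficients are positive, so that $Z(0) = a_0/b_0$ and $Z(\infty) = a_2/b_2$ are both finite and strictly positive. Every case is then an argument by contradiction: the presence of a forbidden path or cut-set would force one of these two boundary values to be $0$ or $\infty$, contradicting the above.

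First I would record the two limiting facts: as $s \to \infty$, a capacitor behaves as a short ($1/(Cs) \to 0$) and an inductor as an open ($Ls \to \infty$); as $s \to 0$, the roles reverse. Now suppose the graph contains a path $\mathcal{P}(a,a')$ whose edges are all capacitors. Its series impedance is $\sum_k 1/(C_k s) \to 0$ as $s \to \infty$, so this branch short-circuits the terminals in that limit and hence $\lim_{s\to\infty} Z(s) = 0$, contradicting $Z(\infty) = a_2/b_2 > 0$. If instead $\mathcal{P}(a,a')$ consists entirely of inductors, the same branch has impedance $\left(\sum_k L_k\right) s \to 0$ as $s \to 0$, forcing $Z(0) = 0$ and contradicting $Z(0) = a_0/b_0 > 0$.

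For the cut-set cases I would argue symmetrically (or invoke the network duality of Section~\ref{sec: Preliminaries}, under which paths and cut-sets, and capacitors and inductors, interchange while $1/Z(s)$ remains in $\mathcal{Z}_b$, since the condition of Lemma~\ref{lemma: condition of at most four} is invariant under swapping the $a_i$ with the $b_i$). Concretely, if $\mathcal{C}(a,a')$ consists only of capacitors, then at $s = 0$ every cut-set edge is an open circuit; since removing these edges disconnects $\mathcal{N}$ into the parts $\mathcal{N}_1 \ni a$ and $\mathcal{N}_2 \ni a'$, the terminals become mutually open and $Z(0) = \infty$, contradicting finiteness of $a_0/b_0$. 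Likewise an all-inductor cut-set is open at $s = \infty$, giving $Z(\infty) = \infty$ and the same contradiction.

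The routine facts (positivity of the coefficients fixing the boundary values; the short/open limits of the reactive elements) are immediate, so the one point needing care is justifying the topological implications rigorously: that a branch whose impedance tends to $0$ between $a$ and $a'$ forces the driving-point impedance to vanish, and dually that an all-open cut-set forces it to diverge. I would make the first precise by writing $Z(s) = Z_{\mathcal{P}}(s) \parallel Z_{\mathrm{rest}}(s)$ and letting $s \to \infty$ (resp. $s \to 0$), and the second by noting that an open cut-set leaves no conducting route between $\mathcal{N}_1$ and $\mathcal{N}_2$. This is the main, though modest, obstacle; the remainder is direct substitution.
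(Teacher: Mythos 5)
The paper gives no proof of this lemma---it is quoted from \cite{WCH14}---but your argument is the standard one for such topological exclusions and is correct: a monochromatic reactive path shorts the terminals at $s\to\infty$ (capacitors) or $s\to 0$ (inductors), while a monochromatic reactive cut-set opens them at the opposite extreme, contradicting $0<Z(0)=a_0/b_0<\infty$ and $0<Z(\infty)=a_2/b_2<\infty$, which hold since all six coefficients of $Z(s)\in\mathcal{Z}_b$ are positive. The only phrasing to tighten is the decomposition $Z=Z_{\mathcal{P}}\parallel Z_{\mathrm{rest}}$, which is not literally a parallel connection for a general graph; the clean justification is the monotonicity bounds $Z(\sigma)\le\sum_{e\in\mathcal{P}}z_e(\sigma)$ and $Z(\sigma)^{-1}\le\sum_{e\in\mathcal{C}}z_e(\sigma)^{-1}$ for real $\sigma>0$, which give exactly the limits you need.
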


\begin{lemma}   \label{lemma: U V W lemma}
{If $U$, $V$, $W$ $> 0$ satisfy $W \neq 3$,
\begin{equation}
- 4U^2 - 4V^2 + 4UV(W +  W^{-1})-(W -  W^{-1})^2 > 0,
\label{eq: lemma U V W condition 01}
\end{equation}
and
\begin{equation}
\begin{split}
-4U^2 - 4V^2 &+ 4UV(W -  3W^{-1}) \\
&- (W -  W^{-1})(W - 9W^{-1}) \geq 0,
\label{eq: lemma U V W condition 02}
\end{split}
\end{equation}
then
\begin{equation}  \label{eq: lemma U V W condition 03}
4\zeta_{c}^{\ast}W^{-1}(UW-V)(UW-3V) + 8\lambda_c^{\ast}(V^2 - U^2) > 0.
\end{equation}}
\end{lemma}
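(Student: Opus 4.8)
The plan is to translate both hypotheses into the canonical quantities of Section~\ref{sec: A Canonical Biquadratic Form} and then exhibit \eqref{eq: lemma U V W condition 03} as a non-negative combination of them. First I would observe that \eqref{eq: lemma U V W condition 01} is precisely $\mathds{R}_c > 0$, while \eqref{eq: lemma U V W condition 02}, after expanding $(W-W^{-1})(W-9W^{-1}) = W^2 - 10 + 9W^{-2}$, is precisely $-\eta_c^{\ast} \geq 0$ (equivalently $\eta_c^{\ast} \leq 0$). Writing $T$ for the left-hand side of \eqref{eq: lemma U V W condition 03} and $f := W^{-1}(UW-V)(UW-3V) = WU^2 - 4UV + 3W^{-1}V^2$, the goal becomes to show $T = 4\zeta_c^{\ast} f + 8\lambda_c^{\ast}(V^2 - U^2) > 0$.

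The key preparatory step is the algebraic identity
\begin{equation}  \label{eq: plan identity}
4\zeta_c^{\ast} = 3\mathds{R}_c - \eta_c^{\ast} + 16 W^{-1} U V ,
\end{equation}
which one verifies by matching the $U^2$, $V^2$, $UV$, and constant coefficients. Since $\mathds{R}_c > 0$, $-\eta_c^{\ast} \geq 0$, and $16W^{-1}UV > 0$ under the hypotheses, \eqref{eq: plan identity} yields $\zeta_c^{\ast} > 0$. This fixes the sign of the coefficient multiplying the sign-indefinite factor $(UW-V)(UW-3V)$ in $T$, which is what makes the inequality plausible but not immediate, since $f$ and $V^2-U^2$ each change sign on the admissible region.

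With $\zeta_c^{\ast} > 0$ in hand, I would expand $T$ into an explicit polynomial in $U,V$ with Laurent-polynomial coefficients in $W$, and seek a representation $T = P\,\mathds{R}_c + Q\,(-\eta_c^{\ast}) + S$, where $P$ and $Q$ are polynomials that are non-negative for $U,V,W>0$ and $S$ is a manifestly non-negative remainder (a sum of squares times a positive power of $W$). Matching the quartic part of $T$ against $P$ and $Q$ times the quadratic forms of $\mathds{R}_c$ and $-\eta_c^{\ast}$ is generically solvable, since those two forms span the space of quartic forms and leave a one-parameter family of multipliers; the remaining freedom, together with the degree-zero parts of $P$ and $Q$, is then used to match the quadratic part of $T$ and, crucially, to force $P,Q \geq 0$.

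The hard part will be securing non-negativity of the multipliers and, above all, the strictness of the conclusion, and this is exactly where $W \neq 3$ enters. At $W = 3$ the hypothesis \eqref{eq: lemma U V W condition 02} collapses to $-4(U-V)^2 \geq 0$, forcing $U = V$; substituting $W = 3$, $U = V$ into $T$ gives $f = 0$ and $V^2 - U^2 = 0$, hence $T = 0$. Thus strict positivity genuinely fails at $W = 3$, so the decomposition must be arranged so that the remainder $S$ carries a factor vanishing at $W = 3$ (so that $S$ alone cannot certify $T>0$ there), while for $W \neq 3$ the strict inequality $\mathds{R}_c > 0$ together with the non-negativity of the other contributions yields $T > 0$. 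Verifying that the chosen multipliers are indeed non-negative and that the combined remainder is a valid sum of squares for all $W \neq 3$ is the main computational obstacle.
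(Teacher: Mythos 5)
Your preparatory observations are correct and worth keeping: condition \eqref{eq: lemma U V W condition 01} is indeed $\mathds{R}_c>0$, condition \eqref{eq: lemma U V W condition 02} is indeed $-\eta_c^{\ast}\geq 0$, the identity $4\zeta_c^{\ast}=3\mathds{R}_c-\eta_c^{\ast}+16W^{-1}UV$ checks out (so $\zeta_c^{\ast}>0$ under the hypotheses), and your diagnosis of the boundary case $W=3$, $U=V$, where the left-hand side of \eqref{eq: lemma U V W condition 03} degenerates to $0$, is exactly right. But the proof stops where the lemma actually begins. The entire content of the statement is the existence of a positivity certificate, and you never produce the representation $T=P\,\mathds{R}_c+Q\,(-\eta_c^{\ast})+S$ nor argue that one of the required form exists: you only note that matching the quartic part is ``generically solvable'' and defer the sign constraints on $P$, $Q$, $S$ to ``the main computational obstacle.'' Those constraints are severe. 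Since $-\eta_c^{\ast}$ and $S$ may both vanish on the feasible set, strictness of the conclusion must come entirely from $P\,\mathds{R}_c$, so $P$ must be strictly positive on the relevant region for $W\neq 3$ yet vanish at $W=3$, $U=V$ (where $\mathds{R}_c>0$ but $T=0$); whether a Laurent-polynomial multiplier with these properties exists within the one-parameter family you describe is precisely what needs proving, and nothing in the proposal establishes it. As written, the argument proves nothing beyond $\zeta_c^{\ast}>0$.

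For comparison, the paper's own proof (in the supplementary material) does not attempt an algebraic certificate at all. It first deduces $W>3$, applies the rotation $U=\tfrac{\sqrt{2}}{2}(x-y)$, $V=\tfrac{\sqrt{2}}{2}(x+y)$, rewrites the two hypotheses as lower bounds $x>x_{\Psi_1}(y)$ and $x\geq x_{\Psi_2}(y)$, and then shows the target $\Sigma(x,y)$ is increasing in $x$ on the feasible region by proving $\Sigma_{xx}>0$ and $\Sigma_x>0$ on the boundary curve; the boundary sign checks are certified by comparing squares and by a Sturm-chain count of real roots. If you want to salvage your route, you must either exhibit explicit $P$, $Q$, $S$ and verify their signs, or fall back on a monotonicity-plus-boundary argument of the kind the paper uses.
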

\begin{proof}
See \cite{CWLC15} for   details.
\end{proof}



\subsection{Five-Element Bridge Networks with Two Reactive Elements of the Same Type}  \label{subsec: same type}

\begin{lemma}  \label{lemma: realization of Five-Element Bridge Networks with Two Reactive Elements of the Same Type}
{A biquadratic impedance $Z(s) \in \mathcal{Z}_b$ is realizable as a five-element bridge network containing two reactive elements of the same type if and only if $Z(s)$ is the impedance of one of   configurations in Figs.~\ref{fig: Quartet-01-same-kind} and \ref{fig: Quartet-02-same-kind}.}
\end{lemma}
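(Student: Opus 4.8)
The plan is to prove both directions by a finite enumeration of bridge topologies, reduced by the available symmetries, and then by matching the resulting driving-point impedances against \eqref{eq: biquadratic impedance}. First I would fix the graph of a five-element bridge. Up to relabeling, its one-terminal-pair graph has four vertices --- the two terminals $a,a'$ and two internal nodes --- and five edges, namely the two arms incident to $a$, the two arms incident to $a'$, and the single bridging edge between the internal nodes; equivalently it is $K_4$ with the port edge $aa'$ deleted and the port taken across that deletion. The topological symmetries preserving the impedance are the interchange of the two terminals $a\leftrightarrow a'$ and the interchange of the two internal nodes, generating a group of order four. On top of these, two operations act on the realizability data: frequency inversion $s\mapsto 1/s$ (the $^{\ast}$ map $W\mapsto W^{-1}$, which exchanges the ``both inductors'' and ``both capacitors'' cases) and network duality (the $^{\dag}$ map $U\leftrightarrow V$). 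It therefore suffices to enumerate the placements of two reactive elements of one fixed type together with three resistors on the five edges, modulo this combined group, each equivalence class accounting for a ``quartet'' of configurations.

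Second, I would prune the enumeration using the structural constraints already established. Since $Z(s)\in\mathcal{Z}_b$ is realizable with two same-type reactive elements, Lemma~\ref{lemma: type of elements} gives $\mathds{R}>0$, and Lemma~\ref{lemma: topological structure} forbids any path $\mathcal{P}(a,a')$ or cut-set $\mathcal{C}(a,a')$ built only from reactive edges. With exactly two reactive edges of the same type in the bridge graph, these conditions immediately discard every placement in which the two reactive edges form an $a$-to-$a'$ path or together separate $a$ from $a'$. This leaves only a short list of admissible configurations, which I would show are precisely those drawn in Figs.~\ref{fig: Quartet-01-same-kind} and \ref{fig: Quartet-02-same-kind}.

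Third, for each admissible configuration I would compute the driving-point impedance across $a,a'$ by nodal (or mesh) analysis --- the bridge is non-series-parallel, so the impedance must be obtained as a ratio of two determinants rather than by series/parallel combination --- and then match its coefficients against \eqref{eq: biquadratic impedance}. Passing to the canonical form $Z_c(s)$ via \eqref{eq: from Z to Zc} and using the $^{\ast}$ and $^{\dag}$ symmetries to fold together the members of each quartet, the realizability conditions collapse into inequalities in $U,V,W$ expressing positive-realness together with positivity and finiteness of the five element values. The technical inequality of Lemma~\ref{lemma: U V W lemma} is invoked here to settle the sign of the decisive expression in \eqref{eq: lemma U V W condition 03}, thereby confirming that the derived conditions are met exactly by the two figures.

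Finally, sufficiency is the easy direction: if $Z(s)$ is the impedance of a configuration in Figs.~\ref{fig: Quartet-01-same-kind} or \ref{fig: Quartet-02-same-kind}, then that network is the required realization. The main obstacle is necessity, and within it the explicit solution of the coefficient-matching system for each surviving bridge: one must express the element values in closed form, impose positivity, and rule out every admissible bridge outside the two figures. The determinant-based impedance formulas make this step algebraically heavy, and it is where the canonical form and the symmetry reductions do the essential work of keeping the case analysis finite and the inequalities tractable.
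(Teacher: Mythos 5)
Your proposal is correct and follows essentially the same route as the paper, whose entire proof is the one-line observation that the result follows from Lemma~\ref{lemma: topological structure} by enumeration: one lists the $\binom{5}{2}$ placements of two same-type reactive elements on the bridge graph, quotients by the terminal/internal-node symmetries and duality, discards the placements forming a reactive path $\mathcal{P}(a,a')$ or cut-set $\mathcal{C}(a,a')$, and is left with exactly the two orbits shown in Figs.~\ref{fig: Quartet-01-same-kind} and \ref{fig: Quartet-02-same-kind}. Your third paragraph (impedance matching, canonical form, Lemma~\ref{lemma: U V W lemma}) is not needed for this purely topological lemma --- that work belongs to Theorems~\ref{theorem: condition of the No. 85 network} and \ref{theorem: condition of the No. 86 network} --- but its inclusion does not harm the argument.
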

\begin{proof}
The proof is straightforward based on
Lemma~\ref{lemma: topological structure}   using the method of enumeration.
\end{proof}

\begin{figure}[thpb]
      \centering
      \includegraphics[scale=1.1]{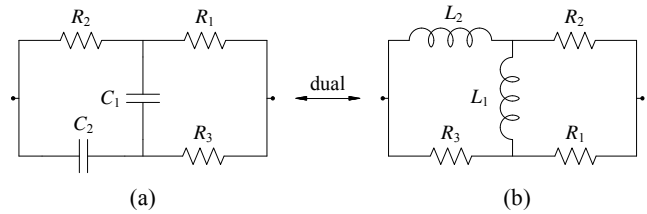}
      \caption{The two-reactive five-element bridge configurations containing the same type of reactive elements, which are  respectively supported by two
      one-terminal-pair labeled graphs $\mathcal{N}_1$ and $\text{Dual}(\mathcal{N}_1)$,  where (a) is No.~85 configuration in \cite{Lad48} and (b) is No.~60 configuration in \cite{Lad48}.}
      \label{fig: Quartet-01-same-kind}
\end{figure}

\begin{figure}[thpb]
      \centering
      \includegraphics[scale=1.1]{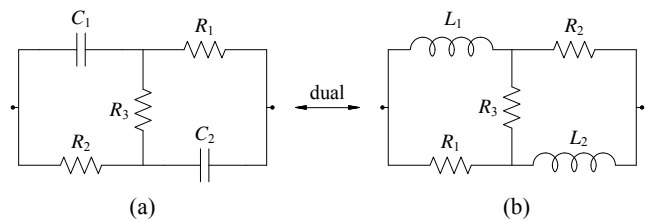}
      \caption{The two-reactive five-element bridge configurations containing the same type of reactive elements, which are respectively supported by two
      one-terminal-pair labeled graphs $\mathcal{N}_2$ and $\text{Dual}(\mathcal{N}_2)$, where (a) is No.~86 configuration in \cite{Lad48} and (b) is No.~61 configuration in \cite{Lad48}.}
      \label{fig: Quartet-02-same-kind}
\end{figure}


\begin{theorem}
\label{theorem: condition of the No. 85 network}
{A biquadratic impedance $Z(s) \in \mathcal{Z}_b$ is realizable as the configuration in
Fig.~\ref{fig: Quartet-01-same-kind}  if and only if
$\mathds{R} - 4a_0a_2b_0b_2 \geq 0$. Furthermore, if $\mathds{R} - 4a_0a_2b_0b_2 \geq 0$ and $\mathds{B} > 0$,
then $Z(s)$ is realizable as the configuration in Fig.~\ref{fig: Quartet-01-same-kind}(a) with values of elements satisfying
\begin{subequations}
\begin{align}
R_2 &= \frac{a_0 - b_0R_1}{b_0}, \label{eq: No. 85 values R2}  \\
R_3 &= \frac{a_2 R_1}{b_2 R_1 - a_2},  \label{eq: No. 85 values R3}       \\
C_1 &= \frac{(a_1 a_2b_0 + a_0 \mathds{C}) R_1 - a_0a_1a_2}{(a_0-b_0R_1)R_1^2 \mathds{M}},
\label{eq: No. 85 values C1}    \\
C_2 &= \frac{(a_2b_0b_1 + b_2 \mathds{A}) - b_0b_1b_2 R_1}{(a_0-b_0R_1) \mathds{M}},
\label{eq: No. 85 values C2}
\end{align}
\end{subequations}
and $R_1$ is the positive root of the following quadratic equation:
\begin{equation}  \label{eq: equation for the root No. 85}
b_0 b_2 \Gamma_a  R_1^2 - (\mathds{M}\mathds{R} + 2a_0a_2b_0b_2\Delta_{ab}) R_1 + a_0a_2 \Gamma_b = 0.
\end{equation}
}
\end{theorem}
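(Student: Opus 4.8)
The plan is to derive the driving-point impedance of the bridge in Fig.~\ref{fig: Quartet-01-same-kind}(a) directly from its one-terminal-pair labeled graph and match it, coefficient by coefficient, against \eqref{eq: biquadratic impedance}. Since both reactive elements are capacitors, a nodal analysis of the bridge produces a ratio of two quadratics in $s$ whose coefficients are polynomials in $R_1,R_2,R_3,C_1,C_2$; equating the coefficients of this impedance with $a_2,a_1,a_0,b_2,b_1,b_0$ up to a common factor yields five independent equations for the five element values. First I would solve the two resistive equations for $R_2$ and $R_3$, obtaining \eqref{eq: No. 85 values R2} and \eqref{eq: No. 85 values R3}, and the remaining equations for $C_1$ and $C_2$, obtaining \eqref{eq: No. 85 values C1} and \eqref{eq: No. 85 values C2}, all expressed through the single free parameter $R_1$. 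Substituting these back into the last coefficient equation is the consistency condition, which I expect to collapse to the quadratic \eqref{eq: equation for the root No. 85}; thus every admissible realization corresponds to a positive root $R_1$ of \eqref{eq: equation for the root No. 85} for which \eqref{eq: No. 85 values R2}--\eqref{eq: No. 85 values C2} are all positive, and conversely.

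The realizability question then becomes a root-location problem. Reading off the formulas, $R_3>0$ forces $R_1>a_2/b_2$ and $R_2>0$ forces $R_1<a_0/b_0$; since $a_0/b_0-a_2/b_2=\mathds{B}/(b_0b_2)$, the interval $(a_2/b_2,\,a_0/b_0)$ is nonempty exactly when $\mathds{B}>0$, which ties configuration~(a) to the case $\mathds{B}>0$ (the case $\mathds{B}<0$ being left to its dual, and $\mathds{B}=0$ excluded on $\mathcal{Z}_b$ by Lemma~\ref{lemma: condition of at most four}). Imposing $C_1>0$ and $C_2>0$ adds one further lower and one further upper bound on $R_1$, so the admissible set is an explicit open interval $\mathcal{I}$, and realizability as~(a) is equivalent to $\mathcal{I}$ containing a positive root of \eqref{eq: equation for the root No. 85}.

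To identify this with the stated inequality, I would pass to the canonical form \eqref{eq: canonical form} through \eqref{eq: from Z to Zc}, so that all conditions become polynomial in $U,V,W>0$. Writing $p:=W+W^{-1}$ and $D:=\mathds{R}_c-4$, the outer coefficients of \eqref{eq: equation for the root No. 85} become $\Gamma_{a_c}/W$ and $W\Gamma_{b_c}$ and the middle coefficient becomes $m:=pD+8UV$; the identities $\Gamma_{a_c}=D+4U^2$ and $\Gamma_{b_c}=D+4V^2$ then yield the compact discriminant $m^2-4\Gamma_{a_c}\Gamma_{b_c}=p^2\,\mathds{R}_c\,(\mathds{R}_c-4)$. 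Because Lemma~\ref{lemma: type of elements} forces $\mathds{R}>0$, i.e.\ $\mathds{R}_c>0$ (two reactive elements of the same type), the quadratic has real roots if and only if $\mathds{R}_c-4\ge0$; and when $\mathds{R}_c-4\ge0$ one has $\Gamma_{a_c},\Gamma_{b_c}>0$ and $m>0$, so by Vieta's formulas both roots are positive. Hence $\mathds{R}-4a_0a_2b_0b_2\ge0$ (the back-transform of $\mathds{R}_c-4\ge0$ via \eqref{eq: from Zc to Z}) is exactly the condition for \eqref{eq: equation for the root No. 85} to admit a positive root, which is necessary for realizability; for sufficiency it remains to check that one such root lies in $\mathcal{I}$ so that $C_1,C_2>0$, which I would settle using the sign data carried by $\lambda_c$, $\zeta_c$, and $\eta_c$ together with Lemma~\ref{lemma: U V W lemma} and the positive-realness bound $\sigma_c\ge0$. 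The case $\mathds{B}<0$ follows by duality, since the dual graph in Fig.~\ref{fig: Quartet-01-same-kind}(b) realizes the reciprocal impedance, under which $\mathds{B}$ changes sign while $\mathds{R}-4a_0a_2b_0b_2$ is invariant; the two cases together give the stated equivalence.

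The main obstacle will be the interval-membership step rather than the discriminant identity: once \eqref{eq: equation for the root No. 85} is known to have two positive roots, one must show that, under $\mathds{R}_c-4\ge0$ and $\mathds{B}>0$, at least one of them simultaneously meets $R_1>a_2/b_2$, $R_1<a_0/b_0$, and the two bounds imposed by $C_1,C_2>0$. In the original six coefficients these four endpoint comparisons are unwieldy, which is precisely why the reduction to $U,V,W$ is indispensable: there they collapse to a handful of polynomial inequalities in $p,U,V$ that can be decided with the identities above and Lemma~\ref{lemma: U V W lemma}.
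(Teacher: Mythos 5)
Your necessity argument is essentially the paper's: coefficient matching, elimination down to the single quadratic \eqref{eq: equation for the root No. 85}, the discriminant identity $\delta=\mathds{M}^2\mathds{R}(\mathds{R}-4a_0a_2b_0b_2)$ (your canonical-form computation $m^2-4\Gamma_{a_c}\Gamma_{b_c}=p^2\mathds{R}_c(\mathds{R}_c-4)$ is the same identity), Lemma~\ref{lemma: type of elements} to pin the sign of $\mathds{R}$, and duality for the $\mathds{B}<0$ case. That half is fine.

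The gap is in sufficiency, at exactly the step you flag as ``the main obstacle'' and then defer. Showing that the quadratic has two positive roots is not enough; you must show that some root makes $C_1$ and $C_2$ in \eqref{eq: No. 85 values C1}--\eqref{eq: No. 85 values C2} positive, and your plan to ``settle this using the sign data carried by $\lambda_c$, $\zeta_c$, and $\eta_c$ together with Lemma~\ref{lemma: U V W lemma}'' is both uncarried-out and misdirected: Lemma~\ref{lemma: U V W lemma} establishes the specific inequality \eqref{eq: lemma U V W condition 03}, which is the tool needed for the configuration of Fig.~\ref{fig: Quartet-02-same-kind} in Theorem~\ref{theorem: condition of the No. 86 network}, not for this bridge, and $\eta_c,\zeta_c$ likewise belong to that other proof. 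The paper closes the gap without any canonical-form computation, by a short sign argument: once the element values are defined by \eqref{eq: No. 85 values R2}--\eqref{eq: No. 85 values C2} (after first checking $R_1\neq a_0/b_0$ and $R_1\neq a_2/b_2$ by evaluating the quadratic at those points), the algebraic identity $k^4\mathds{R}=((R_1+R_3)R_1C_1+(R_1+R_2)R_3C_2)^2R_2^2C_1C_2$ together with $\mathds{R}>0$ forces $C_1C_2>0$; the case $C_1,C_2<0$ is excluded because it would give $(a_1a_2b_0+a_0\mathds{C})(a_2b_0b_1+b_2\mathds{A})<a_0a_1a_2b_0b_1b_2$, i.e.\ $\mathds{A}\mathds{C}<0$, contradicting $\mathds{A}\mathds{C}>\mathds{B}^2>0$; and finally the resulting window $(a_0a_1a_2)/(a_1a_2b_0+a_0\mathds{C})<R_1<(a_2b_0b_1+b_2\mathds{A})/(b_0b_1b_2)$ is strictly contained in $(a_2/b_2,\,a_0/b_0)$ because $\mathds{A}>0$ and $\mathds{C}>0$, so positivity of $R_2$ and $R_3$ comes for free rather than as two extra endpoint comparisons. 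Without this (or some equivalent) argument your proof does not establish sufficiency, so as written the proposal is incomplete.
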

\begin{proof}
\textit{Necessity.}
The impedance of the configuration in Fig.~\ref{fig: Quartet-01-same-kind}(a) is
\begin{equation}   \label{eq: general impedance of No. 85 network}
Z(s) = \frac{a(s)}{b(s)},
\end{equation}
where $a(s) = R_1R_2R_3C_1C_2s^2 + ((R_1R_2+R_2R_3+R_1R_3)C_1+(R_1+R_2)R_3C_2)s+(R_1+R_2)$ and $b(s) = (R_1+R_3)R_2C_1C_2s^2 + ((R_1+R_3)C_1+(R_1+R_2+R_3)C_2)s + 1$.
Supposing that $Z(s) \in \mathcal{Z}_b$ is realizable as the configuration in
Fig.~\ref{fig: Quartet-01-same-kind}(a), it follows that
\begin{subequations}
\begin{align}
R_1 R_2 R_3 C_1 C_2  &= ka_2,  \label{eq: No. 85 ka2}  \\
(R_1R_2+ R_3(R_1 + R_2))C_1+(R_1+R_2)R_3&C_2  = ka_1,
\label{eq: No. 85 ka1}   \\
R_1+R_2 &= ka_0,  \label{eq: No. 85 ka0} \\
(R_1+R_3)R_2 C_1C_2 &= kb_2, \label{eq: No. 85 kb2} \\
(R_1+R_3)C_1+(R_1+R_2+R_3)C_2 &= kb_1,  \label{eq: No. 85 kb1} \\
1 &= kb_0.  \label{eq: No. 85 kb0}
\end{align}
\end{subequations}
From \eqref{eq: No. 85 kb0}, one obtains
\begin{equation}  \label{eq: No. 85 k}
k = \frac{1}{b_0}.
\end{equation}
From \eqref{eq: No. 85 ka2} and \eqref{eq: No. 85 kb2}, it follows that
$1/R_1 + 1/R_3 = b_2/a_2$.
The assumption that $R_1 > 0$ and $R_3 > 0$ implies
\begin{equation}  \label{eq: No. 85 positive restriction 02}
b_2 R_1 - a_2 > 0.
\end{equation}
Hence, $R_3$ is solved as \eqref{eq: No. 85 values R3}. Based on \eqref{eq: No. 85 ka0} and \eqref{eq: No. 85 k}, $R_2$ is solved as \eqref{eq: No. 85 values R2}, implying
\begin{equation}
a_0 - b_0 R_1 > 0.    \label{eq: No. 85 positive restriction 01}
\end{equation}
Substituting \eqref{eq: No. 85 values R2}, \eqref{eq: No. 85 values R3}, and \eqref{eq: No. 85 k} into \eqref{eq: No. 85 ka1} and \eqref{eq: No. 85 kb1}, $C_1$ and $C_2$ can be solved as \eqref{eq: No. 85 values C1} and
\eqref{eq: No. 85 values C2}, implying
\begin{align}
(a_1 a_2b_0 + a_0 \mathds{C}) R_1 - a_0a_1a_2 &> 0,
\label{eq: No. 85 positive restriction 03}   \\
(a_2b_0b_1 + b_2 \mathds{A}) - b_0b_1b_2 R_1 &> 0.
\label{eq: No. 85 positive restriction 04}
\end{align}
Substituting \eqref{eq: No. 85 values R2}--\eqref{eq: No. 85 values C2} and \eqref{eq: No. 85 k} into \eqref{eq: No. 85 ka2} yields  \eqref{eq: equation for the root No. 85}. The discriminant of \eqref{eq: equation for the root No. 85} in $R_1$ is obtained as
\begin{equation}  \label{eq: discriminant of No. 85}
\begin{split}
\delta &= (- \mathds{M}\mathds{R} - 2a_0a_2b_0b_2\Delta_{ab})^2 - 4b_0b_2\Gamma_a a_0a_2\Gamma_b  \\
&= \mathds{M}^2 \mathds{R} (\mathds{R} - 4a_0a_2b_0b_2),
\end{split}
\end{equation}
which must be nonnegative. Together with Lemma~\ref{lemma: type of elements}, it follows that $\mathds{R} - 4a_0a_2b_0b_2 \geq 0$. Moreover, from \eqref{eq: No. 85 positive restriction 02} and \eqref{eq: No. 85 positive restriction 01}, one obtains
$\mathds{B} > 0$. Therefore, if $Z(s) \in \mathcal{Z}_b$ is realizable as the configuration in Fig.~\ref{fig: Quartet-01-same-kind}(b), then $\mathds{R} - 4a_0a_2b_0b_2 \geq 0$ and $\mathds{B} < 0$, which are obtained through the principle of duality ($a_2 \leftrightarrow b_2$, $a_1 \leftrightarrow b_1$, and $a_0 \leftrightarrow b_0$) \cite{CWLC15}.

\textit{Sufficiency.}
By the principle of duality, it suffices to show that if  $\mathds{R} - 4a_0a_2b_0b_2 \geq 0$ and
$\mathds{B} > 0$, then $Z(s) \in \mathcal{Z}_b$ is realizable as the configuration  in Fig.~\ref{fig: Quartet-01-same-kind}(a).
Since $\mathds{R} - 4a_0a_2b_0b_2 \geq 0$, it follows that
$\Gamma_a   \geq  4a_0a_2b_0b_2 + b_0b_2 \Delta_a =  a_1^2 b_0 b_2 > 0$, $\Gamma_b   \geq  4a_0a_2b_0b_2 + a_0a_2 \Delta_b = b_1^2 a_0 a_2 > 0$,
$\mathds{M}\mathds{R} + 2a_0a_2b_0b_2(a_1b_1 - 2 \mathds{M}) \geq 4a_0a_2b_0b_2 \mathds{M} + 2a_0a_2b_0b_2(a_1b_1 - 2 \mathds{M}) = 2a_0a_1a_2b_0b_1b_2 > 0$, and
the discriminant of  \eqref{eq: equation for the root No. 85} in $R_1$ as expressed in \eqref{eq: discriminant of No. 85} is nonnegative. Hence,  \eqref{eq: equation for the root No. 85} in $R_1$ has one or two nonzero real roots, which must be positive.

Moreover, $\mathds{A} \mathds{C} > 0$
since $\mathds{R} = \mathds{A} \mathds{C} - \mathds{B}^2 > 0$.
Assume that $\mathds{A} < 0$, that is,
$a_0 b_1 < a_1 b_0$.
Together with $\mathds{B} > 0$, that is,
$a_2 b_0 < a_0 b_2$,
one obtains that $(a_0 b_1) (a_2 b_0) < (a_1 b_0) (a_0 b_2)$, which is equivalent to
$\mathds{C} > 0$. This contradicts   the fact that $\mathds{A} \mathds{C} > 0$
as derived above. Therefore, it is only possible that
$\mathds{A} > 0$ and $\mathds{C} > 0$,
which implies that
$a_1 a_2 b_0 + a_0 \mathds{C} > 0$ and $a_2b_0b_1 + b_2 \mathds{A} > 0$. Replacing $R_1$ in \eqref{eq: equation for the root No. 85} by $a_0/b_0$ and $a_2/b_2$ yields
$a_0^2 \mathds{C}^2 > 0$ and
$a_2^2 \mathds{A}^2 > 0$, respectively. Therefore, $a_0 - b_0R_1 \neq 0$ and $b_2R_1 - a_2 \neq 0$ provided that $R_1$ is the positive root of  \eqref{eq: equation for the root No. 85}.

For the configuration in Fig.~\ref{fig: Quartet-01-same-kind}(a), let the values of the elements therein satisfy \eqref{eq: No. 85 values R2}--\eqref{eq: No. 85 values C2}, where $R_1$ is the positive root of \eqref{eq: equation for the root No. 85}. Let the value of $k$ satisfy \eqref{eq: No. 85 k}. It can be verified that \eqref{eq: No. 85 ka2}--\eqref{eq: No. 85 kb0} hold. Hence, it follows that
$k^4 \mathds{R} = ((R_1+R_3)R_1C_1 + (R_1+R_2)R_3C_2)^2R_2^2C_1C_2$,
which implies that $C_1$ and $C_2$ must be simultaneously  positive or negative. This means that
$((a_1a_2b_0 + a_0 \mathds{C})R_1 - a_0a_1a_2)$  and
$((a_2b_0b_1 + b_2 \mathds{A}) - b_0b_1b_2R_1)$ are simultaneously positive or negative. Assume that
$(a_1a_2b_0 + a_0 \mathds{C})R_1 - a_0a_1a_2 < 0$ and
$(a_2b_0b_1 + b_2 \mathds{A}) - b_0b_1b_2R_1 < 0$.
Then, one obtains
$(a_1a_2b_0 + a_0 \mathds{C})(a_2b_0b_1 + b_2 \mathds{A}) < a_0a_1a_2 b_0b_1b_2$,
which is equivalent to $\mathds{A} \mathds{C} < 0$.
This contradicts   $\mathds{A} \mathds{C} > 0$. Therefore, conditions~\eqref{eq: No. 85 positive restriction 03} and \eqref{eq: No. 85 positive restriction 04} hold, which is equivalent to
$(a_0a_1a_2)/(a_1a_2b_0+a_0\mathds{C}) < R_1 < (a_2b_0b_1 + b_2 \mathds{A})/(b_0b_1b_2)$. Since $(a_0a_1a_2)/(a_1a_2b_0+a_0\mathds{C}) > a_2/b_2$ and $(a_2b_0b_1 + b_2 \mathds{A})/(b_0b_1b_2) < a_0/b_0$
because of $\mathds{A} > 0$ and $\mathds{C} > 0$, it follows that conditions~\eqref{eq: No. 85 positive restriction 01} and \eqref{eq: No. 85 positive restriction 02} must hold. Hence, the values of elements as expressed in \eqref{eq: No. 85 values R2}--\eqref{eq: equation for the root No. 85} must be positive and finite. As a conclusion, the given impedance $Z(s)$ is realizable as the specified network.
\end{proof}

\begin{lemma}   \label{lemma: condition of the No. 86 network}
{A biquadratic impedance $Z(s) \in \mathcal{Z}_b$ is realizable as the configuration in
Fig.~\ref{fig: Quartet-02-same-kind}(a) with $R_1 \neq R_2$ if and only if there exists a positive root of
\begin{equation}  \label{eq: equation for the root No. 86}
b_0 \mathds{E}_b R_3^2 - (\mathds{R} + 2a_2b_0\mathds{B}) R_3 + a_2 \mathds{D}_a = 0
\end{equation}
in $R_3$ such that
\begin{subequations}
\begin{align}
b_2 R_3 - a_2 &> 0, \label{eq: No. 86 positive restriction 01}  \\
a_0 - b_0 R_3 &> 0, \label{eq: No. 86 positive restriction 02}  \\
b_0 (\mathds{C} - 2a_2b_1)\mathds{C}R_3 + a_2(a_0&a_2b_1^2 - a_1^2b_0b_2) > 0.  \label{eq: No. 86 positive restriction 03}
\end{align}
\end{subequations}
Furthermore, if the condition is satisfied and if $R_1 > R_2$,
then the values of the elements are expressed as
\begin{subequations}
\begin{align}
R_1 &= \frac{(a_0 - b_0R_3)(1+\sqrt{\Lambda})}{2b_0},
\label{eq: No. 86 values R1}  \\
R_2 &= \frac{(a_0 - b_0R_3)(1-\sqrt{\Lambda})}{2b_0},
\label{eq: No. 86 values R2}       \\
C_1 &= \frac{a_1 - b_1 R_2}{b_0(R_2+R_3)(R_1 - R_2)},
\label{eq: No. 86 values C1}    \\
C_2 &= \frac{b_1 R_1 - a_1}{b_0(R_1+R_3)(R_1 - R_2)},
\label{eq: No. 86 values C2}
\end{align}
\end{subequations}
where
\begin{equation}
\Lambda = 1 - \frac{4 a_2 b_0 R_3}{(b_2 R_3 - a_2)(a_0 - b_0R_3)},
\end{equation}
and $R_3$ is the positive root of
\eqref{eq: equation for the root No. 86} satisfying
\eqref{eq: No. 86 positive restriction 01}--\eqref{eq: No. 86 positive restriction 03}.
}
\end{lemma}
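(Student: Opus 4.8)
First I would write the driving-point impedance of the configuration in Fig.~\ref{fig: Quartet-02-same-kind}(a) as $Z(s)=a(s)/b(s)$, where $a(s)$ and $b(s)$ are quadratics whose coefficients are polynomials in $R_1,R_2,R_3,C_1,C_2$, and equate it with the $k$-scaled biquadratic \eqref{eq: biquadratic impedance}. This produces six coefficient-matching equations (one for each power $s^2,s^1,s^0$ in numerator and denominator), and exactly as in \eqref{eq: No. 85 k} the constant-term relations fix $k=1/b_0$. Throughout I would exploit the $R_1\leftrightarrow R_2$ symmetry of the network, which interchanges $C_1$ and $C_2$, so that $R_3$ plays here the distinguished role that $R_1$ played in the proof of Theorem~\ref{theorem: condition of the No. 85 network} and the pair $\{R_1,R_2\}$ is recovered only as an unordered pair until the ordering $R_1>R_2$ is imposed.

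For \emph{necessity}, I would eliminate the variables in an order that keeps the system triangular in $R_3$. The low- and high-degree coefficient equations determine the symmetric functions $R_1+R_2$ and $R_1R_2$ as explicit rational functions of $R_3$, namely $R_1+R_2=(a_0-b_0R_3)/b_0$ and $R_1R_2=a_2R_3(a_0-b_0R_3)/\big(b_0(b_2R_3-a_2)\big)$; these are precisely the content of \eqref{eq: No. 86 values R1}, \eqref{eq: No. 86 values R2} with the stated $\Lambda$. Substituting them, the two remaining linear-coefficient equations solve for $C_1,C_2$ as in \eqref{eq: No. 86 values C1}, \eqref{eq: No. 86 values C2}, and feeding the result back into one $s^2$-equation collapses to the single quadratic \eqref{eq: equation for the root No. 86} in $R_3$. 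Reality and positivity of $R_1,R_2$ then force $\Lambda>0$ (this is exactly where the hypothesis $R_1\neq R_2$ enters) together with $R_1R_2>0$ and $R_1+R_2>0$, which translate verbatim into \eqref{eq: No. 86 positive restriction 01} and \eqref{eq: No. 86 positive restriction 02}, while positivity of $C_1,C_2$ yields \eqref{eq: No. 86 positive restriction 03}.

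For \emph{sufficiency} I would run the chain backwards: given a positive root $R_3$ of \eqref{eq: equation for the root No. 86} meeting \eqref{eq: No. 86 positive restriction 01}--\eqref{eq: No. 86 positive restriction 03}, I define the five element values by \eqref{eq: No. 86 values R1}--\eqref{eq: No. 86 values C2} and verify that the six matching equations hold identically, so the network realizes $Z(s)$. The substantive point is positivity. Conditions \eqref{eq: No. 86 positive restriction 01} and \eqref{eq: No. 86 positive restriction 02} make $R_1R_2>0$ and $0<\Lambda<1$, hence $R_1>R_2>0$. Since $R_1>R_2$, the denominators in \eqref{eq: No. 86 values C1}, \eqref{eq: No. 86 values C2} are positive, so $C_1>0$ and $C_2>0$ are together equivalent to $R_2<a_1/b_1<R_1$, i.e.\ to the scalar $a_1/b_1$ lying strictly between the two roots of $t^2-(R_1+R_2)t+R_1R_2$. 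By the standard sign criterion this is equivalent to the negativity of that quadratic at $t=a_1/b_1$; clearing denominators and then using \eqref{eq: equation for the root No. 86} to eliminate the $R_3^2$ term reduces the inequality to a constant multiple of the left-hand side of \eqref{eq: No. 86 positive restriction 03}. (That $C_1$ and $C_2$ share a sign can also be seen independently through the analogue of the $k^4\mathds{R}$ identity used in Theorem~\ref{theorem: condition of the No. 85 network}, since $\mathds{R}>0$ here by Lemma~\ref{lemma: type of elements}.)

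The main obstacle is the bookkeeping in these two reductions. In the forward (necessity) half I must confirm that the elimination yields \eqref{eq: equation for the root No. 86} with precisely the coefficients $b_0\mathds{E}_b$, $-(\mathds{R}+2a_2b_0\mathds{B})$, and $a_2\mathds{D}_a$; in the backward (sufficiency) half I must verify that replacing $b_0\mathds{E}_bR_3^2$ by $(\mathds{R}+2a_2b_0\mathds{B})R_3-a_2\mathds{D}_a$ collapses the genuinely quadratic betweenness inequality into exactly the linear form \eqref{eq: No. 86 positive restriction 03}, tracking the sign contributed by $\mathds{E}_b$. To keep these polynomial identities tractable I would carry them out in the canonical coordinates $U,V,W$ of Section~\ref{sec: A Canonical Biquadratic Form}, where $\mathds{D}_a,\mathds{E}_b,\mathds{R},\mathds{B}$ have the compact images recorded there.
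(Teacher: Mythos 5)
Your overall strategy---coefficient matching with $k=1/b_0$, expressing the symmetric functions $R_1+R_2$ and $R_1R_2$ in terms of $R_3$, solving $C_1,C_2$ from the two linear-coefficient equations, and collapsing the remaining equation to the quadratic \eqref{eq: equation for the root No. 86}---is exactly the paper's. The gap is in your sufficiency half, and it comes from misassigning the role of condition \eqref{eq: No. 86 positive restriction 03}. You claim that \eqref{eq: No. 86 positive restriction 01} and \eqref{eq: No. 86 positive restriction 02} alone give $0<\Lambda<1$. They do not: they only give $\Lambda<1$, because
\[
\Lambda=\frac{-\bigl(b_0b_2R_3^2+(2a_2b_0-\mathds{B})R_3+a_0a_2\bigr)}{(b_2R_3-a_2)(a_0-b_0R_3)},
\]
and the quadratic in the numerator has positive leading and constant coefficients, so it can perfectly well be positive (hence $\Lambda<0$ and $R_1,R_2$ non-real) for an $R_3$ lying in $(a_2/b_2,\,a_0/b_0)$. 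In the paper, \eqref{eq: No. 86 positive restriction 03} is precisely the condition that forces $b_0b_2R_3^2+(2a_2b_0-\mathds{B})R_3+a_0a_2<0$ at a root of \eqref{eq: equation for the root No. 86}, i.e.\ $\Lambda>0$; it is \emph{not} the condition for $C_1,C_2>0$. As written, your sufficiency argument fails at the first step, since $R_1,R_2$ defined by \eqref{eq: No. 86 values R1}--\eqref{eq: No. 86 values R2} need not be real and distinct.

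Your identification of \eqref{eq: No. 86 positive restriction 03} with the betweenness condition $R_2<a_1/b_1<R_1$ does happen to be consistent: modulo \eqref{eq: equation for the root No. 86} one has $\mathds{E}_b\bigl(b_0b_1\mathds{C}R_3^2-\mathds{A}\mathds{C}R_3+a_1a_2\mathds{A}\bigr)=-\mathds{B}\bigl(b_0(\mathds{C}-2a_2b_1)\mathds{C}R_3+a_2(a_0a_2b_1^2-a_1^2b_0b_2)\bigr)$, and both sides are equivalent to $\Lambda>0$. But your reduction needs the signs of $\mathds{E}_b$ and $\mathds{B}$, and in the sufficiency direction you cannot invoke Lemma~\ref{lemma: type of elements} to get $\mathds{R}>0$ (hence $\mathds{E}_b>0$) before a realization is in hand. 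The paper sidesteps both issues: in sufficiency it substitutes \eqref{eq: equation for the root No. 86} into the product to get $C_1C_2=(b_2R_3-a_2)/\bigl((a_0-b_0R_3)R_3^2\bigr)>0$ directly from \eqref{eq: No. 86 positive restriction 01}--\eqref{eq: No. 86 positive restriction 02}, so the two capacitances share a sign, and $R_1>R_2$ rules out the both-negative case. Thus positivity of $C_1,C_2$ is automatic, and \eqref{eq: No. 86 positive restriction 03} is consumed entirely by the requirement $\Lambda>0$. Rewiring your sufficiency argument along these lines closes the gap.
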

\begin{proof}
\textit{Necessity.}
The impedance of the configuration in Fig.~\ref{fig: Quartet-02-same-kind}(a) is obtained as
\begin{equation}  \label{eq: general impedance of No. 86 network}
Z(s) = \frac{a(s)}{b(s)},
\end{equation}
where  $a(s) = R_1R_2R_3C_1C_2 s^2 + ((R_2+R_3)R_1C_1 + (R_1+R_3)R_2C_2)s + (R_1 + R_2 + R_3)$ and $b(s) = (R_1R_2 + R_2R_3 + R_3R_1)C_1C_2s^2 + ((R_2+R_3)C_1 + (R_1+R_3)C_2)s+1$.
Then,
\begin{subequations}
\begin{align}
R_1R_2R_3C_1C_2 &= ka_2,    \label{eq: No. 86 ka2}  \\
(R_2 + R_3)R_1 C_1 + (R_1 + R_3)R_2C_2 &= ka_1, \label{eq: No. 86 ka1}  \\
R_1 + R_2 + R_3 &= ka_0,  \label{eq: No. 86 ka0}  \\
(R_1R_2 + R_2R_3 + R_3R_1)C_1C_2 &= kb_2,  \label{eq: No. 86 kb2}  \\
(R_2+R_3)C_1 + (R_1+R_3)C_2 &= kb_1,  \label{eq: No. 86 kb1}  \\
1 &= kb_0.  \label{eq: No. 86 kb0}
\end{align}
\end{subequations}
It is obvious that \eqref{eq: No. 86 kb0} is equivalent to
\begin{equation}   \label{eq: No. 86 k}
k = \frac{1}{b_0}.
\end{equation}
Together with \eqref{eq: No. 86 ka1} and \eqref{eq: No. 86 kb1}, $C_1$ and $C_2$ are solved as \eqref{eq: No. 86 values C1} and \eqref{eq: No. 86 values C2}.
From \eqref{eq: No. 86 ka2} and \eqref{eq: No. 86 kb2}, one obtains
\begin{equation}  \label{eq: No. 86 1/R1 + 1/R2 + 1/R3}
\frac{1}{R_1} + \frac{1}{R_2} + \frac{1}{R_3} = \frac{b_2}{a_2}.
\end{equation}
As a result, condition~\eqref{eq: No. 86 positive restriction 01} is derived.
Due to the symmetry of this configuration, one can assume
that $R_1 > R_2$ without loss of generality. Therefore, from \eqref{eq: No. 86 ka0}, \eqref{eq: No. 86 k}, and \eqref{eq: No. 86 1/R1 + 1/R2 + 1/R3}, $R_1$ and $R_2$ are solved as \eqref{eq: No. 86 values R1} and \eqref{eq: No. 86 values R2}, which implies
condition~\eqref{eq: No. 86 positive restriction 02}. Substituting the expressions of \eqref{eq: No. 86 values R1}--\eqref{eq: No. 86 values C2} and \eqref{eq: No. 86 k} into $(R_1R_2R_3C_1C_2 - ka_2)$ gives
\begin{equation}
\begin{split}
&R_1R_2R_3C_1C_2 - ka_2 \\
=&  \frac{a_2}{b_0} \cdot
\frac{b_0 \mathds{E}_b R_3^2 - (\mathds{R} + 2a_2b_0\mathds{B}) R_3 + a_2 \mathds{D}_a}{\mathds{B}(b_0 b_2 R_3^2 + (2 a_2 b_0 - \mathds{B}) R_3 + a_0 a_2)}.
\end{split}
\end{equation}
Then, one obtains \eqref{eq: equation for the root No. 86}.
Since $\Lambda$ must be nonnegative and $(b_0 b_2 R_3^2 + (2 a_2 b_0 - \mathds{B}) R_3 + a_0 a_2)$ cannot be zero, it follows that
\begin{equation}  \label{eq: No. 86 negative}
b_0 b_2 R_3^2 + (2 a_2 b_0 - \mathds{B}) R_3 + a_0 a_2 < 0.
\end{equation}
Substituting the expressions of the roots of  \eqref{eq: equation for the root No. 86} into \eqref{eq: No. 86 negative} yields
$((\mathds{R} + 2a_2b_0\mathds{B}) + \sqrt{(\mathds{R} + 2a_2b_0\mathds{B})^2 - 4a_2b_0
\mathds{D}_a\mathds{E}_b})
(\mathds{C} - 2a_2b_1)\mathds{C}  + 2a_2 \mathds{E}_b
(a_0a_2b_1^2 - a_1^2b_0b_2) > 0$ or $((\mathds{R} + 2a_2b_0\mathds{B}) - \sqrt{(\mathds{R} + 2a_2b_0\mathds{B})^2 - 4a_2b_0\mathds{D}_a\mathds{E}_b})
(\mathds{C} - 2a_2b_1)\mathds{C} + 2a_2 \mathds{E}_b
(a_0a_2b_1^2 - a_1^2b_0b_2) > 0$,
which is   equivalent to condition~\eqref{eq: No. 86 positive restriction 03}.

\textit{Sufficiency.}
Let the values of the elements satisfy
\eqref{eq: No. 86 values R1}--\eqref{eq: No. 86 values C2}, and $R_3$ be a positive root of \eqref{eq: equation for the root No. 86} satisfying \eqref{eq: No. 86 positive restriction 01} and \eqref{eq: No. 86 positive restriction 02}. Let $k$ satisfy \eqref{eq: No. 86 k}. Then, it can be verified that \eqref{eq: No. 86 ka2}--\eqref{eq: No. 86 kb0} are satisfied. Now, it suffices to prove that values of elements must be positive and finite.
From the discussion in  the necessity part, it is noted that condition~\eqref{eq: No. 86 positive restriction 03} yields $\Lambda
> 0$, and
conditions~\eqref{eq: No. 86 positive restriction 01} and \eqref{eq: No. 86 positive restriction 02} imply $R_1 > 0$ and $R_2 > 0$. Besides, one has
\begin{equation}
\begin{split}
C_1 C_2
=&  \frac{b_1^2(a_0-b_0R_3)^2\Lambda -
(\mathds{A} - a_1b_0 - b_0b_1R_3)^2}{(a_0 - b_0R_3)^2((a_0+b_0R_3)^2-(a_0-b_0R_3)^2\Lambda)\Lambda}  \\
=& \frac{(b_2 R_3 - a_2)(b_0b_1\mathds{C}R_3^2 -
\mathds{A}\mathds{C}  R_3 + a_1a_2 \mathds{A})}
{(a_0 - b_0R_3)(b_0b_2R_3^2 +
(2a_2b_0 - \mathds{B}) R_3 + a_0a_2)R_3^2\mathds{B}} \\
=& \frac{(b_2 R_3 - a_2)}{(a_0 - b_0R_3)R_3^2} > 0,
\end{split}
\end{equation}
where the third equality is \eqref{eq: equation for the root No. 86}.
Since $R_1 > R_2$, it follows from \eqref{eq: No. 86 values C1} and \eqref{eq: No. 86 values C2} that $C_1 > 0$ and $C_2 > 0$.
\end{proof}

The realizability condition for the configuration in Fig.~\ref{fig: Quartet-02-same-kind}(a) with $R_1 = R_2$ is derived as follows.

\begin{lemma}  \label{lemma: condition of the No. 86' network}
{A biquadratic impedance $Z(s) \in \mathcal{Z}_b$ is realizable as the configuration in Fig.~\ref{fig: Quartet-02-same-kind}(a) with $R_1 = R_2$  if and only if
\begin{subequations}
\begin{align}
a_1b_1^2(2\mathds{A} - a_1b_0) - 4b_2\mathds{A}^2 &\geq 0,
\label{eq: No. 86 network condition 01 R1 = R2}  \\
a_1b_2(\mathds{A} - a_1b_0)
= a_2b_1(2\mathds{A} - a_1b_0) &> 0.
\label{eq: No. 86 network condition 02 R1 = R2}
\end{align}
\end{subequations}
Furthermore, if the condition is satisfied, then the values of the elements are expressed as
\begin{subequations}
\begin{align}
R_1 &= \frac{a_1}{b_1}, \label{eq: No. 86' values R1}  \\
R_2 &= \frac{a_1}{b_1}, \label{eq: No. 86' values R2}  \\
R_3 &= \frac{\mathds{A} - a_1b_0}{b_0 b_1},
\label{eq: No. 86' values R3}  \\
C_1 &= \frac{b_1^2b_2}{a_1(2\mathds{A} - a_1b_0)C_2},
\label{eq: No. 86' values C1}
\end{align}
\end{subequations}
and $C_2$ is a positive root of
\begin{equation}   \label{eq: No. 86' values C2}
a_1\mathds{A}(2\mathds{A} - a_1b_0) C_2^2
- a_1 b_1^2 (2\mathds{A} - a_1b_0) C_2 + b_1^2b_2\mathds{A}
=0.
\end{equation}
}
\end{lemma}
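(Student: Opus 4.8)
The plan is to parallel the proof of Lemma~\ref{lemma: condition of the No. 86 network}, reusing the impedance expression \eqref{eq: general impedance of No. 86 network} and the six matching equations \eqref{eq: No. 86 ka2}--\eqref{eq: No. 86 kb0}, but now imposing $R_1 = R_2 =: R$, which is the boundary case complementary to the $R_1 > R_2$ situation already handled by symmetry. For \textit{necessity}, \eqref{eq: No. 86 kb0} again gives $k = 1/b_0$. Setting $R_1 = R_2$ in \eqref{eq: No. 86 ka1} and \eqref{eq: No. 86 kb1} yields $(R + R_3)R(C_1 + C_2) = ka_1$ and $(R + R_3)(C_1 + C_2) = kb_1$; dividing these gives $R = a_1/b_1$, so \eqref{eq: No. 86' values R1} and \eqref{eq: No. 86' values R2} hold. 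Substituting into \eqref{eq: No. 86 ka0} with $k = 1/b_0$ gives $2R + R_3 = a_0/b_0$, hence $R_3 = (\mathds{A} - a_1 b_0)/(b_0 b_1)$, which is \eqref{eq: No. 86' values R3}. Short computations then give $R + R_3 = \mathds{A}/(b_0 b_1)$ and $R + 2R_3 = (2\mathds{A} - a_1 b_0)/(b_0 b_1)$.

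The two remaining quadratic constraints determine $C_1 C_2$ in two ways: from \eqref{eq: No. 86 ka2}, $C_1 C_2 = a_2/(b_0 R^2 R_3)$, and from \eqref{eq: No. 86 kb2}, $C_1 C_2 = b_2/(b_0 R(R + 2R_3))$. Substituting the values of $R$ and $R_3$ into both and equating them produces exactly the equality $a_1 b_2(\mathds{A} - a_1 b_0) = a_2 b_1(2\mathds{A} - a_1 b_0)$ of \eqref{eq: No. 86 network condition 02 R1 = R2}, while the required positivity of $R_3$ forces $\mathds{A} - a_1 b_0 > 0$, i.e.\ the strict inequality in \eqref{eq: No. 86 network condition 02 R1 = R2}. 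With $C_1 C_2 = b_1^2 b_2/(a_1(2\mathds{A} - a_1 b_0))$ and $C_1 + C_2 = b_1^2/\mathds{A}$ from \eqref{eq: No. 86 kb1}, the values $C_1$ and $C_2$ are the roots of $t^2 - (C_1 + C_2)t + C_1 C_2 = 0$, which after clearing denominators is precisely \eqref{eq: No. 86' values C2}, and solving for $C_1$ in terms of $C_2$ gives \eqref{eq: No. 86' values C1}. Requiring these roots to be real amounts to the discriminant inequality $(C_1 + C_2)^2 - 4 C_1 C_2 \geq 0$, which, after multiplying through by the positive quantity $a_1(2\mathds{A} - a_1 b_0)\mathds{A}^2/b_1^2$, is exactly \eqref{eq: No. 86 network condition 01 R1 = R2}.

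For \textit{sufficiency}, assume \eqref{eq: No. 86 network condition 01 R1 = R2} and \eqref{eq: No. 86 network condition 02 R1 = R2}, and define the element values by \eqref{eq: No. 86' values R1}--\eqref{eq: No. 86' values C2} with $k = 1/b_0$. The strict inequality in \eqref{eq: No. 86 network condition 02 R1 = R2} gives $\mathds{A} > a_1 b_0 > 0$ and hence $2\mathds{A} - a_1 b_0 > 0$, so $R_1 = R_2 = a_1/b_1 > 0$ and $R_3 > 0$. Condition \eqref{eq: No. 86 network condition 01 R1 = R2} is precisely the nonnegativity of the discriminant of \eqref{eq: No. 86' values C2}, so a real root $C_2$ exists; since the product $b_1^2 b_2/(a_1(2\mathds{A} - a_1 b_0))$ and the sum $b_1^2/\mathds{A}$ of its roots are both positive, both roots are positive, whence $C_2 > 0$ and $C_1 > 0$. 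It then remains to verify directly that \eqref{eq: No. 86 ka2}--\eqref{eq: No. 86 kb0} hold for these values; the three linear relations together with $C_1 + C_2$ and $C_1 C_2$ are built in by construction, and the consistency of \eqref{eq: No. 86 ka2} with \eqref{eq: No. 86 kb2} is guaranteed by the equality in \eqref{eq: No. 86 network condition 02 R1 = R2}.

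The main obstacle is the algebraic bookkeeping in the necessity step: isolating the single scalar identity \eqref{eq: No. 86 network condition 02 R1 = R2} as the compatibility condition between the two expressions for $C_1 C_2$, and verifying that the discriminant of \eqref{eq: No. 86' values C2} collapses to the clean form \eqref{eq: No. 86 network condition 01 R1 = R2}. Once $R = a_1/b_1$ and $R_3 = (\mathds{A} - a_1 b_0)/(b_0 b_1)$ are fixed, everything reduces to tracking the two symmetric functions $C_1 + C_2$ and $C_1 C_2$, so no genuinely new idea beyond careful substitution is needed.
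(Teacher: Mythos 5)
Your proposal is correct and follows essentially the same route as the paper: impose $R_1=R_2$ in the matching equations \eqref{eq: No. 86 ka2}--\eqref{eq: No. 86 kb0}, solve sequentially for $R_1=a_1/b_1$, $R_3=(\mathds{A}-a_1b_0)/(b_0b_1)$ and the capacitor values, and read off \eqref{eq: No. 86 network condition 01 R1 = R2} from the discriminant and \eqref{eq: No. 86 network condition 02 R1 = R2} from the remaining compatibility constraint. Your organization via the symmetric functions $C_1+C_2$ and $C_1C_2$ (deriving the equality condition by equating the two expressions for the product rather than substituting everything into \eqref{eq: No. 86 ka2} at the end, as the paper does) is only a cosmetic reordering of the same elimination.
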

\begin{proof}
\textit{Necessity.}
Since it is assumed that $R_1 = R_2$, \eqref{eq: No. 86 ka2}--\eqref{eq: No. 86 kb0} become
\begin{subequations}
\begin{align}
R_1^2 R_3C_1C_2 &= ka_2,    \label{eq: No. 86' ka2}  \\
(R_1 + R_3)R_1 C_1 + (R_1 + R_3)R_1C_2 &= ka_1,
\label{eq: No. 86' ka1}  \\
2R_1 + R_3 &= ka_0,  \label{eq: No. 86' ka0}  \\
(R_1 + 2R_3)R_1C_1C_2 &= kb_2,  \label{eq: No. 86' kb2}  \\
(R_1+R_3)C_1 + (R_1+R_3)C_2 &= kb_1,  \label{eq: No. 86' kb1}  \\
1 &= kb_0.  \label{eq: No. 86' kb0}
\end{align}
\end{subequations}
It is obvious that \eqref{eq: No. 86' kb0} is equivalent to
\begin{equation} \label{eq: No. 86' k}
k = \frac{1}{b_0}.
\end{equation}
From \eqref{eq: No. 86' ka1} and \eqref{eq: No. 86' kb1}, it follows that $R_1$ satisfies \eqref{eq: No. 86' values R1}, implying that $R_2$ satisfies \eqref{eq: No. 86' values R2}. Then, substituting  \eqref{eq: No. 86' values R1} and \eqref{eq: No. 86' k} into \eqref{eq: No. 86' ka0}, one concludes that $R_3$ satisfies
\eqref{eq: No. 86' values R3}, which implies $\mathds{A} - a_1b_0 > 0$.
Thus, it follows  from \eqref{eq: No. 86' kb2} and
\eqref{eq: No. 86' kb1} that $C_1$ satisfies
\eqref{eq: No. 86' values C1} and $C_2$ is a positive root of \eqref{eq: No. 86' values C2}.
Consequently, $2\mathds{A} - a_1b_0 > 0$.
Since the discriminant of  \eqref{eq: No. 86' values C2} should be nonnegative, one obtains condition~\eqref{eq: No. 86 network condition 01 R1 = R2}. Finally, substituting \eqref{eq: No. 86' values R1}--\eqref{eq: No. 86' values C2} and \eqref{eq: No. 86' k} into \eqref{eq: No. 86' ka2} yields $a_1b_2(\mathds{A} - a_1b_0) - a_2b_1 (2\mathds{A} - a_1b_0)  = 0$.
Since $\mathds{A} - a_1b_0 > 0$, condition~\eqref{eq: No. 86 network condition 02 R1 = R2} is obtained.

\textit{Sufficiency.}
Let the values of the elements satisfy \eqref{eq: No. 86' values R1}--\eqref{eq: No. 86' values C2}. Let $k$ satisfy
\eqref{eq: No. 86' k}. $\mathds{A} - a_1b_0 > 0$
and
condition~\eqref{eq: No. 86 network condition 01 R1 = R2} guarantee   all the elements to be positive and finite. Since condition~\eqref{eq: No. 86 network condition 02 R1 = R2} holds, it can be verified that \eqref{eq: No. 86 ka2}--\eqref{eq: No. 86 kb0} hold. Therefore, \eqref{eq: general impedance of No. 86 network} is equivalent to \eqref{eq: biquadratic impedance}.
\end{proof}


Following Lemmas~\ref{lemma: condition of the No. 86 network} and \ref{lemma: condition of the No. 86' network}, one can derive the following theorem, where the realizability condition of the configuration in Fig.~\ref{fig: Quartet-02-same-kind}(b) follows from that of Fig.~\ref{fig: Quartet-02-same-kind}(a) based on the principle of duality \cite{CWLC15}.

\begin{theorem}   \label{theorem: condition of the No. 86 network}
{A biquadratic impedance $Z(s) \in \mathcal{Z}_b$ is realizable as the configuration in Fig.~\ref{fig: Quartet-02-same-kind}(a) (resp. Fig.~\ref{fig: Quartet-02-same-kind}(b))  if and only if
$\mathds{R} > 0$ and $\mathds{R} - 4a_2b_0(a_1b_1-2\mathds{B}) \geq 0$ (resp. $\mathds{R} > 0$ and
$\mathds{R} - 4a_0b_2(a_1b_1 + 2\mathds{B}) \geq 0$).
}
\end{theorem}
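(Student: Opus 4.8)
The plan is to read off realizability as the configuration in Fig.~\ref{fig: Quartet-02-same-kind}(a) from Lemmas~\ref{lemma: condition of the No. 86 network} and \ref{lemma: condition of the No. 86' network}, which together cover the cases $R_1 \neq R_2$ and $R_1 = R_2$, and then to show that the union of their rather different-looking conditions collapses to the single pair $\mathds{R} > 0$ and $\mathds{R} - 4a_2b_0(a_1b_1 - 2\mathds{B}) \geq 0$. The statement for Fig.~\ref{fig: Quartet-02-same-kind}(b) follows immediately by the duality substitution $a_i \leftrightarrow b_i$, under which $\mathds{R}$ is invariant while $4a_2b_0(a_1b_1 - 2\mathds{B})$ is sent to $4a_0b_2(a_1b_1 + 2\mathds{B})$, producing the parenthetical condition.

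The computational heart is the discriminant of the quadratic \eqref{eq: equation for the root No. 86} in $R_3$, namely $\delta = (\mathds{R} + 2a_2b_0\mathds{B})^2 - 4a_2b_0\mathds{D}_a\mathds{E}_b$. First I would expand $\mathds{D}_a\mathds{E}_b = (a_1\mathds{A} - a_0\mathds{B})(b_1\mathds{C} - b_2\mathds{B})$ and eliminate the product $\mathds{A}\mathds{C}$ via $\mathds{A}\mathds{C} = \mathds{R} + \mathds{B}^2$, obtaining $\mathds{D}_a\mathds{E}_b = a_1b_1\mathds{R} + \mathds{B}^2(a_1b_1 + a_0b_2) - \mathds{B}(a_1b_2\mathds{A} + a_0b_1\mathds{C})$. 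Using the two identities $a_0^2b_2^2 + a_2^2b_0^2 - 2a_0a_2b_0b_2 = \mathds{B}^2$ and $a_1b_1\mathds{M} - a_1^2b_0b_2 - a_0a_2b_1^2 = \mathds{A}\mathds{C}$, the terms of $\delta$ carrying no explicit factor of $\mathds{R}$ collapse to $4a_2b_0\mathds{B}\,\mathds{R}$, and the whole discriminant reduces to the clean factorisation
\begin{equation*}
\delta = \mathds{R}\bigl(\mathds{R} - 4a_2b_0(a_1b_1 - 2\mathds{B})\bigr),
\end{equation*}
mirroring the factorisation $\delta = \mathds{M}^2\mathds{R}(\mathds{R} - 4a_0a_2b_0b_2)$ obtained for the No.~85 network in Theorem~\ref{theorem: condition of the No. 85 network}.

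For necessity, Lemma~\ref{lemma: type of elements} gives $\mathds{R} > 0$ because the two reactive elements are of the same type. If the realization has $R_1 \neq R_2$, Lemma~\ref{lemma: condition of the No. 86 network} furnishes a positive, hence real, root of \eqref{eq: equation for the root No. 86}, so $\delta \geq 0$; combined with $\mathds{R} > 0$ and the above factorisation this forces $\mathds{R} - 4a_2b_0(a_1b_1 - 2\mathds{B}) \geq 0$. In the degenerate case $R_1 = R_2$ covered by Lemma~\ref{lemma: condition of the No. 86' network}, conditions \eqref{eq: No. 86 network condition 01 R1 = R2} and \eqref{eq: No. 86 network condition 02 R1 = R2} must hold, and a short computation that uses the equality \eqref{eq: No. 86 network condition 02 R1 = R2} to substitute into the discriminant inequality \eqref{eq: No. 86 network condition 01 R1 = R2} again yields the target; equivalently, one verifies by direct substitution that the value $R_3 = (\mathds{A} - a_1b_0)/(b_0b_1)$ of Lemma~\ref{lemma: condition of the No. 86' network} is itself a root of \eqref{eq: equation for the root No. 86}, so that $\delta \geq 0$ once more.

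For sufficiency, assume $\mathds{R} > 0$ and $\mathds{R} - 4a_2b_0(a_1b_1 - 2\mathds{B}) \geq 0$, so $\delta \geq 0$ and \eqref{eq: equation for the root No. 86} has real roots; the task is to exhibit a positive root meeting the positivity restrictions \eqref{eq: No. 86 positive restriction 01}--\eqref{eq: No. 86 positive restriction 03} (so that Lemma~\ref{lemma: condition of the No. 86 network} applies), or else to land in the $R_1 = R_2$ case of Lemma~\ref{lemma: condition of the No. 86' network}. Here I would pass to the canonical form, under which the two hypotheses become $\mathds{R}_c > 0$, i.e.\ \eqref{eq: lemma U V W condition 01}, and $\eta_c^{\ast} \leq 0$, i.e.\ \eqref{eq: lemma U V W condition 02}; restriction \eqref{eq: No. 86 positive restriction 03} then translates, after the root substitution, into inequality \eqref{eq: lemma U V W condition 03}, which Lemma~\ref{lemma: U V W lemma} supplies whenever $W \neq 3$, while positivity of the selected root and restrictions \eqref{eq: No. 86 positive restriction 01}--\eqref{eq: No. 86 positive restriction 02} are checked from the coefficients using $\mathds{A} > 0$ and $\mathds{C} > 0$ as in Theorem~\ref{theorem: condition of the No. 85 network}. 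I expect the main obstacle to be exactly this sufficiency bookkeeping: selecting the correct root of \eqref{eq: equation for the root No. 86} and verifying all three positivity restrictions simultaneously, together with the exceptional values $W = 3$ and $\Lambda = 0$ (the latter being the $R_1 = R_2$ case, disposed of through the conditions of Lemma~\ref{lemma: condition of the No. 86' network}). The discriminant identity is the single clean step; the canonical form and Lemma~\ref{lemma: U V W lemma} are precisely the devices introduced to tame the case analysis downstream.
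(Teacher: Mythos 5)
Your proposal follows essentially the same route as the paper's proof: the same decomposition into the $R_1 \neq R_2$ and $R_1 = R_2$ cases via Lemmas~\ref{lemma: condition of the No. 86 network} and \ref{lemma: condition of the No. 86' network}, the same discriminant factorisation $\delta = \mathds{R}(\mathds{R} - 4a_2b_0(a_1b_1 - 2\mathds{B}))$, the same appeal to Lemma~\ref{lemma: type of elements} for $\mathds{R} > 0$ and to the canonical form together with Lemma~\ref{lemma: U V W lemma} for sufficiency, and the same duality step for Fig.~\ref{fig: Quartet-02-same-kind}(b). The only divergences are minor: the paper handles the $R_1 = R_2$ case by a coordinate rotation in the $(U,V)$-plane rather than by substitution into \eqref{eq: equation for the root No. 86}, and the ``sufficiency bookkeeping'' you flag (showing $\Delta_a$, $\Delta_b$, $\mathds{E}_b > 0$, locating the roots in $(a_2/b_2,\, a_0/b_0)$, and verifying the remaining canonical inequalities by a case split on $UV$ versus $(W - W^{-1})/2$) is precisely where the paper spends most of its effort.
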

\begin{proof}
First, one can show that the condition of Lemma~\ref{lemma: condition of the No. 86 network} is equivalent to
\begin{align}
&\mathds{R} > 0, \ \mathds{B} > 0 , \ \mathds{R} - 4a_2b_0(a_1b_1-2\mathds{B})  \geq 0,
\label{eq: No. 86 subcondition 02}  \\
&2a_2b_0\mathds{E}_b < b_2 (\mathds{R} + 2a_2b_0\mathds{B})  < 2a_0b_2\mathds{E}_b,
\label{eq: No. 86 subcondition 01}  \\
(\mathds{R} &+ 2a_2b_0\mathds{B})(\mathds{C} - 2a_2b_1)\mathds{C}+ 2(a_0a_2b_1^2-a_1^2b_0b_2)a_2\mathds{E}_b  > 0.
\label{eq: No. 86 subcondition 03}
\end{align}
Suppose that the condition of Lemma~\ref{lemma: condition of the No. 86 network} holds.
The discriminant of  \eqref{eq: equation for the root No. 86} is obtained as
$\delta = \mathds{R} (\mathds{R} - 4a_2b_0(a_1b_1-2\mathds{B}))$.
By Lemma~\ref{lemma: type of elements}, one has  $\mathds{R} > 0$.
Together with $\delta \geq 0$, one concludes that $\mathds{R} - 4a_2b_0(a_1b_1-2\mathds{B})  \geq 0$ must hold.  From
\eqref{eq: No. 86 positive restriction 01} and
\eqref{eq: No. 86 positive restriction 02}, one obtains
$\mathds{B} > 0$. Therefore, $\mathds{R} > 0$ indicates
$\mathds{A} \mathds{C} > \mathds{B}^2 > 0$, which further implies that $\mathds{R} + 2a_2b_0\mathds{B} > 0$ and
\begin{equation}
\mathds{E}_b = \frac{-b_2 \mathds{B}^2 + b_1 \mathds{B} \mathds{C}}{\mathds{B}}
> \frac{- b_2 \mathds{A} \mathds{C} +b_1\mathds{B} \mathds{C}}
{\mathds{B}}
= \frac{b_0 \mathds{C}^2}{\mathds{B}} > 0.
\end{equation}
From \cite[Ch.XV, Theorems 11 and 13]{Gan80},
$\mathds{R} > 0$ yields $\Delta_a > 0$ and $\Delta_b > 0$.
Substituting $R_3 = a_2/b_2$ and $R_3 = a_0/b_0$ into the left-hand side of  \eqref{eq: equation for the root No. 86}  yields, respectively,
\begin{align}
\left.b_0 \mathds{E}_b R_3^2 - (\mathds{R} + 2a_2b_0\mathds{B}) R_3 + a_2 \mathds{D}_a\right|_{R_3 = \frac{a_2}{b_2}} &=
\frac{a_2^2\Delta_b \mathds{B}}{b_2^2} > 0, \\
\left.b_0 \mathds{E}_b R_3^2 - (\mathds{R} + 2a_2b_0\mathds{B}) R_3 + a_2 \mathds{D}_a\right|_{R_3 = \frac{a_0}{b_0}} &= \Delta_a \mathds{B} > 0.
\end{align}
Thus, condition~\eqref{eq: No. 86 subcondition 01} holds. If
$\mathds{C} - 2a_2b_1 = 0$,  then condition~\eqref{eq: No. 86 subcondition 03} must hold because of \eqref{eq: No. 86 positive restriction 03}.
Otherwise, substituting $R_3 = - a_2(a_0a_2b_1^2 - a_1^2b_0b_2)/(b_0 \mathds{C} (\mathds{C} - 2a_2b_1))$ into the left-hand side of \eqref{eq: equation for the root No. 86}, one obtains
\begin{equation}
\frac{a_2^2 \mathds{E}_b (3a_1a_2b_0b_1-2a_0a_2b_1^2+a_0a_1b_1b_2-2a_1^2b_0b_2)^2}{b_0
\mathds{C}^2(\mathds{C} - 2a_2b_1)^2} \geq 0.
\end{equation}
Therefore, condition~\eqref{eq: No. 86 subcondition 03} is also satisfied.
Conversely, following the above discussion, one can also prove that  \eqref{eq: No. 86 subcondition 02}--\eqref{eq: No. 86 subcondition 03}    yield the condition of Lemma~\ref{lemma: condition of the No. 86 network}.

By \eqref{eq: from Z to Zc}, one converts   \eqref{eq: No. 86 subcondition 02}--\eqref{eq: No. 86 subcondition 03} into $W > 1$ as well as
\begin{equation}
\mathds{R}_c:= - 4U^2 - 4V^2 + 4UV(W+W^{-1})-(W-W^{-1})^2 > 0, \label{eq: condition 01}
\end{equation}
\begin{equation}
-4U^2 - 4V^2 + 4UV(W-3W^{-1}) - (W-W^{-1})(W-9W^{-1}) \geq 0, \label{eq: condition 02}
\end{equation}
\begin{equation} \label{eq: condition 03}
\begin{split}
- 4 U^2 - 4 V^2 &+ 4UV(W-W^{-1}) \\
&- (W-W^{-1})(W-5W^{-1}) +  8V^2 W^{-2}  > 0,
\end{split}
\end{equation}
\begin{equation}
- 4 V^2 + 4 U^2 + 4 U V (W-W^{-1}) - (W-W^{-1})(W+3W^{-1}) > 0, \label{eq: condition 04}
\end{equation}
and
\begin{equation}
4W^{-1}\zeta_{c}^{\ast}(UW-V)(UW-3V) + 8\lambda_c^{\ast}(V^2 - U^2) > 0.
\label{eq: condition 05}
\end{equation}
It is noted that condition~\eqref{eq: condition 01} yields $U>1$ and $V > 1$, and  condition~\eqref{eq: condition 02} yields $W \geq 3$. If $W = 3$, then $U = V$ by \eqref{eq: condition 02}, contradicting   condition~\eqref{eq: condition 05}. Hence, $W > 3$. Thus,
Lemma~\ref{lemma: U V W lemma} shows that
conditions~\eqref{eq: condition 01} and \eqref{eq: condition 02} with $W \neq 3$ imply condition~\eqref{eq: condition 05}.
If $UV \leq (W - W^{-1})/2$, then  conditions~\eqref{eq: condition 03} and \eqref{eq: condition 04} hold:
$- 4 U^2 - 4 V^2 + 4UV (W-W^{-1}) - (W- W^{-1})(W - 5W^{-1})+ 8V^2W^{-2} >  (W- W^{-1})^2 -  8UVW^{-1}  - (W- W^{-1})(W-5W^{-1}) +  8V^2W^{-2} = 4W^{-1}(W-W^{-1}) - 8UV W^{-1}  +  8V^2W^{-2}
\geq  8V^2W^{-2}  > 0$
and
$- 4 V^2 + 4 U^2 + 4 U V (W-W^{-1}) - (W-W^{-1})(W+ 3W^{-1}) > (W-W^{-1})^2 + 8U^2  -   8UVW^{-1}   - (W-W^{-1})(W+ 3W^{-1}) =
- 4W^{-1}(W - W^{-1}) - 8UV W^{-1}   + 8U^2 \geq -   8W^{-1} (W-W^{-1}) + 8U^2 >  8(U^2 - 1) > 0$,
because of condition~\eqref{eq: condition 01}.
Similarly, if $UV > (W - W^{-1})/2$, then one can also show that conditions~\eqref{eq: condition 03} and \eqref{eq: condition 04} hold
because of condition~\eqref{eq: condition 02}. Therefore, the condition of $W \neq 3$ and \eqref{eq: condition 01}--\eqref{eq: condition 02} together is equivalent to that of $W > 1$ and \eqref{eq: condition 01}--\eqref{eq: condition 05}. Moreover, through \eqref{eq: from Z to Zc}, the condition of Lemma~\ref{lemma: condition of the No. 86' network} is converted into
\begin{align}
2WV - 3U &> 0,   \label{eq: No. 86 condition 01 U V W} \\
2WU^2 + 2WV^2 - UV(W^2+3) &= 0,
\label{eq: No. 86 condition 02 U V W}  \\
U^2 + W^2V^2 + 3U^2V^2 - 2WUV^3 - 2WUV &\leq 0.
\label{eq: No. 86 condition 03 U V W}
\end{align}
When $W = 3$ and conditions~\eqref{eq: condition 01} and \eqref{eq: condition 02} hold, one obtains $U=V > 2\sqrt{3}/3$, implying that \eqref{eq: No. 86 condition 01 U V W}--\eqref{eq: No. 86 condition 03 U V W} hold.  The proof is completed if one can show that conditions~\eqref{eq: No. 86 condition 01 U V W}--\eqref{eq: No. 86 condition 03 U V W} can imply
conditions~\eqref{eq: condition 01} and \eqref{eq: condition 02}.
Indeed, by the following transformation
\begin{align}
U &= x \cos \frac{\pi}{4} - y \sin \frac{\pi}{4} = \frac{\sqrt{2}}{2} (x - y),  \\
V &= x \sin \frac{\pi}{4} + y \cos \frac{\pi}{4} =
\frac{\sqrt{2}}{2} (x + y),
\end{align}
conditions~\eqref{eq: No. 86 condition 01 U V W}--\eqref{eq: No. 86 condition 03 U V W} are further converted into $(2W-3)x + (2W+3)y > 0$, $(W+1)(W+3)y^2 - (W-1)(W-3)x^2 = 0$, and $(3-2W)x^4-4Wx^3y-6x^2y^2+4Wxy^3+(2W+3)y^4+2((W-1)x+(W+1)y)^2
\leq 0$ with $x > 0$,
which are in term equivalent to
\begin{align}
y = \pm \sqrt{\frac{(W-1)(W-3)}{(W+1)(W+3)}} x, \label{eq: No. 86 equivalent condition 01 U V W} \\
x \geq \frac{W+1}{2W}\sqrt{\frac{(W-1)(W+3)}{2}}.
\label{eq: No. 86 equivalent condition 02 U V W}
\end{align}
Next, conditions~\eqref{eq: condition 01} and \eqref{eq: condition 02} are converted into
\begin{equation}
\label{eq: equivalent condition 01}
\begin{split}
2W^{-1}(W-1)^2 x^2
- 2W^{-1}&(W+1)^2 y^2  \\
&- W^{-2}(W^2 - 1)^2   > 0,
\end{split}
\end{equation}
\begin{equation}
\label{eq: equivalent condition 02}
\begin{split}
2W^{-1}(W+&1)(W-3)x^2  \\
&- 2W^{-1}(W+3)(W-1)y^2 \\
&- W^{-2}(W^2-9)(W^2-1)  \geq 0.
\end{split}
\end{equation}
Substituting  \eqref{eq: No. 86 equivalent condition 01 U V W} into \eqref{eq: equivalent condition 01} and \eqref{eq: equivalent condition 02} gives
\begin{align}
\frac{8(W-1)x^2}{W+3} - \frac{(W+1)^2(W-1)^2}{W^2} &\geq 0,  \\
\frac{8(W-3)x^2}{W+1} - \frac{(W^2-1)(W^2-9)}{W^2} &\geq 0,
\end{align}
respectively.
It is obvious that that conditions~\eqref{eq: No. 86 equivalent condition 01 U V W} and \eqref{eq: No. 86 equivalent condition 02 U V W} can imply conditions~\eqref{eq: equivalent condition 01} and \eqref{eq: equivalent condition 02}.

Therefore, $Z_c(s) \in \mathcal{Z}_{b_c}$ is realizable as the configuration in Fig.~\ref{fig: Quartet-02-same-kind}(a)  if and only if conditions~\eqref{eq: condition 01} and \eqref{eq: condition 02} hold.
Through
\eqref{eq: from Zc to Z}, the condition for $Z(s) \in \mathcal{Z}_b$ is obtained as stated in the theorem.
\end{proof}

\begin{theorem}  \label{theorem: main theorem Res > 0}
{A biquadratic impedance $Z(s) \in \mathcal{Z}_b$ is realizable as a five-element bridge network containing two reactive elements of the same type if and only if $\mathds{R} > 0$ and  at least one of $(\mathds{R} - 4a_0b_2(a_1b_1+2\mathds{B}))$, $(\mathds{R} - 4a_2b_0(a_1b_1-2\mathds{B}))$, and $(\mathds{R} - 4a_0a_2b_0b_2)$ is nonnegative.
}
\end{theorem}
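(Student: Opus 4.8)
The plan is to assemble the three realizability results already established for the individual configurations and then perform a purely logical simplification of their disjunction. By Lemma~\ref{lemma: realization of Five-Element Bridge Networks with Two Reactive Elements of the Same Type}, a biquadratic impedance $Z(s) \in \mathcal{Z}_b$ is realizable as a five-element bridge network with two reactive elements of the same type if and only if $Z(s)$ is the impedance of one of the four configurations in Figs.~\ref{fig: Quartet-01-same-kind} and \ref{fig: Quartet-02-same-kind}. Hence the realizability asserted in the theorem is exactly the ``or'' of the four individual conditions, and the task reduces to combining them.

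First I would record the three closed-form conditions. By Theorem~\ref{theorem: condition of the No. 85 network}, the configuration in Fig.~\ref{fig: Quartet-01-same-kind} (either part, the case $\mathds{B}>0$ giving (a) and $\mathds{B}<0$ giving (b) by duality, with $\mathds{B}=0$ excluded since $Z(s)\in\mathcal{Z}_b$) is realizable if and only if $\mathds{R} - 4a_0a_2b_0b_2 \geq 0$. By Theorem~\ref{theorem: condition of the No. 86 network}, Fig.~\ref{fig: Quartet-02-same-kind}(a) is realizable if and only if $\mathds{R} > 0$ and $\mathds{R} - 4a_2b_0(a_1b_1 - 2\mathds{B}) \geq 0$, and Fig.~\ref{fig: Quartet-02-same-kind}(b) if and only if $\mathds{R} > 0$ and $\mathds{R} - 4a_0b_2(a_1b_1 + 2\mathds{B}) \geq 0$. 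The overall realizability is the disjunction of these three statements, so I must show it is equivalent to ``$\mathds{R} > 0$ together with at least one of the three quantities being nonnegative.''

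The crux is the elementary observation that the Fig.~\ref{fig: Quartet-01-same-kind} condition already forces $\mathds{R} > 0$: since $a_0,a_2,b_0,b_2 > 0$, the inequality $\mathds{R} - 4a_0a_2b_0b_2 \geq 0$ yields $\mathds{R} \geq 4a_0a_2b_0b_2 > 0$. With this in hand, both implications are immediate. For the forward direction, if any one of the three disjuncts holds, then $\mathds{R} > 0$ holds (automatically in the Fig.~\ref{fig: Quartet-01-same-kind} case, and by hypothesis in the other two) and the corresponding quantity is nonnegative, so the stated right-hand side follows. For the converse, assume $\mathds{R} > 0$ and that one of the three quantities is nonnegative: if it is $\mathds{R} - 4a_0a_2b_0b_2$, the Fig.~\ref{fig: Quartet-01-same-kind} disjunct holds outright; if it is either of the other two, then combined with $\mathds{R} > 0$ the matching Fig.~\ref{fig: Quartet-02-same-kind} disjunct holds.

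I do not anticipate a genuine obstacle: once the structural lemma channels the problem into the three closed-form conditions, the argument is elementary Boolean algebra, the only nontrivial ingredient being the positivity remark that $\mathds{R} - 4a_0a_2b_0b_2 \geq 0$ implies $\mathds{R} > 0$. (As a sanity check, Lemma~\ref{lemma: type of elements} independently guarantees $\mathds{R} > 0$ for any same-type realization, which is consistent with and reinforces this reduction.)
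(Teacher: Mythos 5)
Your proposal is correct and follows exactly the paper's route: the paper's proof of this theorem is simply ``Combining Lemma~\ref{lemma: realization of Five-Element Bridge Networks with Two Reactive Elements of the Same Type} and Theorems~\ref{theorem: condition of the No. 85 network} and \ref{theorem: condition of the No. 86 network} yield the result,'' and you supply the same combination together with the (correct) elementary observation that $\mathds{R} - 4a_0a_2b_0b_2 \geq 0$ already forces $\mathds{R} > 0$, which makes the Boolean simplification go through.
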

\begin{proof}
Combining Lemma~\ref{lemma: realization of Five-Element Bridge Networks with Two Reactive Elements of the Same Type} and Theorems~\ref{theorem: condition of the No. 85 network} and \ref{theorem: condition of the No. 86 network} yield the result.
\end{proof}

\subsection{Five-Element Bridge Networks with One Inductor and One Capacitor}
\label{subsec: different type}

\begin{lemma}  \label{lemma: all the possible quartets for one inductor and one capacitor}
{A biquadratic impedance $Z(s) \in \mathcal{Z}_b$ is realizable as a five-element bridge network with two reactive elements of different types if and only if $Z(s)$ is the impedance of one of configurations in
Figs.~\ref{fig: Quartet-01-different-kind}--\ref{fig: Quartet-03-different-kind}.}
\end{lemma}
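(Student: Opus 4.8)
The plan is to prove the two directions separately, the content residing almost entirely in the necessity (``only if'') direction, which I would establish by the \emph{method of enumeration}, exactly as for the same-type case in Lemma~\ref{lemma: realization of Five-Element Bridge Networks with Two Reactive Elements of the Same Type}. The sufficiency (``if'') direction is immediate: every configuration in Figs.~\ref{fig: Quartet-01-different-kind}--\ref{fig: Quartet-03-different-kind} is by construction a five-element bridge network carrying one inductor and one capacitor, so whenever $Z(s)$ coincides with the impedance of one of them it is realizable as such a network. For the converse I would fix the underlying one-terminal-pair graph of a five-element bridge once and for all---terminals $a$ and $a'$, two internal vertices $b$ and $c$, the four ``spoke'' edges $ab$, $ac$, $a'b$, $a'c$, and the single bridge edge $bc$---and place one inductor, one capacitor, and three resistors on these five edges. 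I would then enumerate all twenty ordered placements of the (distinguishable) reactive pair on two of the five edges and reduce them modulo the two natural equivalences: the automorphism group of the bridge graph that preserves the unordered terminal pair $\{a,a'\}$, which is generated by the swaps $a\leftrightarrow a'$ and $b\leftrightarrow c$ and has order four, and network duality (interchanging inductor and capacitor, impedance and admittance, and the planar-dual graphs) as in \cite{CWLC15}.

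The crucial observation---and the reason this case produces strictly more surviving configurations than the same-type case---is that Lemma~\ref{lemma: topological structure} is \emph{vacuous} here. Every path $\mathcal{P}(a,a')$ and every cut-set $\mathcal{C}(a,a')$ in the bridge graph contains at least two edges (the shortest $a$--$a'$ path is $a$--$b$--$a'$, and the smallest separating cut-sets are $\{ab,ac\}$ and $\{a'b,a'c\}$); hence with only one inductor and one capacitor no path or cut-set can consist solely of reactive elements of a single kind, and \emph{no} placement is eliminated on topological grounds. A short orbit computation (Burnside's count gives $(20+0+0+0)/4=5$, the three nonidentity symmetries having no fixed placement) therefore collapses the twenty placements to five symmetry-inequivalent configurations: three in which both reactive elements sit on spokes---meeting at a terminal vertex, meeting at an internal vertex, or diagonal---and two in which one reactive element sits on the bridge edge $bc$ while the other sits on a spoke. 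I would then apply duality to pair these five into the three configurations of Figs.~\ref{fig: Quartet-01-different-kind}--\ref{fig: Quartet-03-different-kind}.

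For each of the five survivors I would confirm that it genuinely realizes a biquadratic impedance by checking the endpoint behaviour: at $s=0$ the inductor is a short and the capacitor an open, at $s=\infty$ the inductor is an open and the capacitor a short, and in every case the resulting resistive $a$--$a'$ network has finite nonzero resistance. This secures $0<Z(0),Z(\infty)<\infty$ and, together with a routine degree count on the two contributing reactive elements, that both numerator and denominator have exact degree two, so the impedance is genuinely biquadratic.

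The main obstacle is the bookkeeping of equivalences rather than any single calculation: verifying that the orbit count is correct, that network duality pairs the five placements \emph{exactly} as drawn in the three figures, and that each survivor actually admits impedances in $\mathcal{Z}_b$ (in particular with $\mathds{R}<0$, as forced by Lemma~\ref{lemma: type of elements}) rather than only impedances expressible with fewer elements. Here I would invoke the standing hypothesis that $Z(s)\in\mathcal{Z}_b$ is not realizable with four or fewer elements (Lemma~\ref{lemma: condition of at most four}): any realization on the bridge graph must then use all five elements essentially, which rules out degenerate reductions to series-parallel or four-element networks. Combined with the vacuity of Lemma~\ref{lemma: topological structure}, this shows the five enumerated placements are precisely the configurations of Figs.~\ref{fig: Quartet-01-different-kind}--\ref{fig: Quartet-03-different-kind}, completing the classification.
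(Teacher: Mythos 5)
Your proposal is correct and takes essentially the same route as the paper, whose proof of this lemma is simply the one-line remark that it follows ``by a simple enumeration''; your Burnside count of $20/4=5$ orbits, the observation that Lemma~\ref{lemma: topological structure} is vacuous for one inductor and one capacitor, and the organization of the five surviving placements into the dual pairs $\mathcal{N}_3$, $\text{Dual}(\mathcal{N}_3)$, $\mathcal{N}_4$, $\text{Dual}(\mathcal{N}_4)$ and the self-dual $\mathcal{N}_5$ supply exactly the details the paper omits. The only nitpick is phrasing: duality does not reduce the five configurations to three---all five appear in Figs.~\ref{fig: Quartet-01-different-kind}--\ref{fig: Quartet-03-different-kind}, merely grouped into three figures by dual pairing.
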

\begin{proof}
This lemma is proved by a simple enumeration.
\end{proof}

\begin{figure}[thpb]
      \centering
      \includegraphics[scale=1.1]{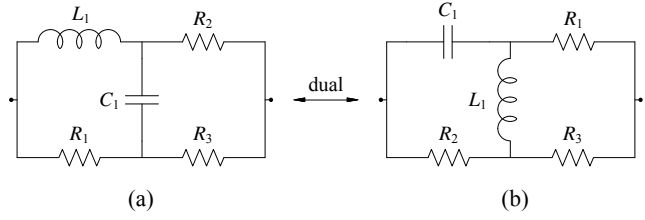}
      \caption{The two-reactive five-element bridge configurations containing different types of reactive elements, which are respectively supported by two one-terminal-pair labeled graphs $\mathcal{N}_3$ and $\text{Dual}(\mathcal{N}_3)$, where (a) is No.~70 configuration  in \cite{Lad48}, and (b) is No.~95 configuration in \cite{Lad48}.}
      \label{fig: Quartet-01-different-kind}
\end{figure}

\begin{figure}[thpb]
      \centering
      \includegraphics[scale=1.1]{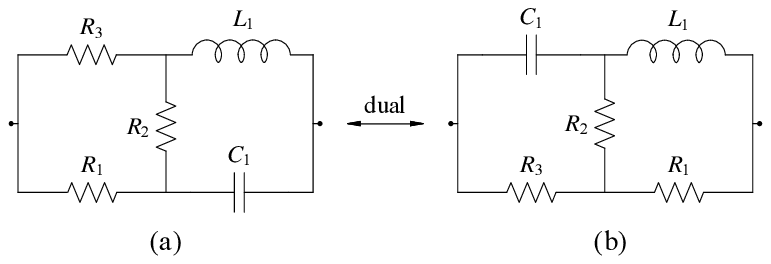}
      \caption{The two-reactive five-element bridge configurations containing different types of reactive elements, which are respectively supported by two one-terminal-pair labeled graphs $\mathcal{N}_4$ and $\text{Dual}(\mathcal{N}_4)$, where (a) is No.~105 configuration in \cite{Lad48}, and (b) is No.~107 configuration in \cite{Lad48}.}
      \label{fig: Quartet-02-different-kind}
\end{figure}

\begin{figure}[thpb]
      \centering
      \includegraphics[scale=1.1]{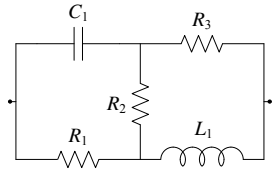}
      \caption{The two-reactive five-element bridge configuration containing different types of reactive elements, which is supported by a one-terminal-pair labeled graph $\mathcal{N}_5$ satisfying $\mathcal{N}_5 = \text{Dual}(\mathcal{N}_5)$, and it is No.~108 configuration in \cite{Lad48}.}
      \label{fig: Quartet-03-different-kind}
\end{figure}

The realizability condition of  Fig.~\ref{fig: Quartet-01-different-kind} has already been established in \cite{JS11}, as follows.

\begin{lemma}  \cite{JS11}
\label{lemma: condition of the quartet 01}
{A biquadratic impedance $Z(s) \in \mathcal{Z}_b$ is realizable as the configuration in Fig.~\ref{fig: Quartet-01-different-kind}(a)
(resp. Fig.~\ref{fig: Quartet-01-different-kind}(b)) if and only if
$\mathds{B} < 0$,
$\mathds{R} - 4a_0b_2(a_1b_1 + 2\mathds{B}) \leq 0$ (resp. $\mathds{B} > 0$, $\mathds{R} - 4a_2b_0(a_1b_1 - 2\mathds{B}) \leq 0$), and signs of $\mathds{D}_b$, $\mathds{E}_a$, and $(\mathds{R} - 2a_0b_2 \mathds{B})$
(resp. $\mathds{D}_a$, $\mathds{E}_b$, and $(\mathds{R} + 2a_2b_0 \mathds{B})$) are not all the same. If $\mathds{R} - 2a_0b_2\mathds{B} = 0$ (resp. $\mathds{R} + 2a_2b_0\mathds{B} = 0$), then $\mathds{D}_b \mathds{E}_a < 0$ (resp. $\mathds{D}_a \mathds{E}_b < 0$). }
\end{lemma}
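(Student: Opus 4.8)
The plan is to follow the uniform coefficient-matching technique already used for the same-type bridges in Theorem~\ref{theorem: condition of the No. 85 network} and Lemmas~\ref{lemma: condition of the No. 86 network}--\ref{lemma: condition of the No. 86' network}, now applied to the mixed inductor--capacitor bridge of Fig.~\ref{fig: Quartet-01-different-kind}(a); the (b) case then follows by the duality substitution $a_i \leftrightarrow b_i$ of \cite{CWLC15}. First I would write the driving-point impedance $Z(s) = a(s)/b(s)$ of the configuration explicitly as a ratio of quadratics in $s$, with coefficients expressed through the three resistances and the two reactive element values (one inductor, one capacitor). Matching against \eqref{eq: biquadratic impedance} with a scaling constant $k$ produces six equations, the analogue of \eqref{eq: No. 85 ka2}--\eqref{eq: No. 85 kb0}. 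As before, $k = 1/b_0$ follows from the constant term, and the remaining equations should reduce, after eliminating the resistances, to a single governing quadratic in one element whose coefficients are polynomials in $a_i$, $b_i$.

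The heart of the necessity argument is to impose positivity and finiteness on each solved element value. Since the two reactive elements are of different types, Lemma~\ref{lemma: type of elements} already forces $\mathds{R} < 0$. The nonnegativity of the discriminant of the governing quadratic, which I expect to factor as a positive multiple of $\mathds{R}\,(\mathds{R} - 4a_0b_2(a_1b_1+2\mathds{B}))$ just as in \eqref{eq: discriminant of No. 85}, then yields $\mathds{R} - 4a_0b_2(a_1b_1+2\mathds{B}) \leq 0$, the inequality direction being reversed precisely because $\mathds{R} < 0$ here. Positivity of the flanking resistances should give $\mathds{B} < 0$. The subtle clause---that $\mathds{D}_b$, $\mathds{E}_a$, and $(\mathds{R} - 2a_0b_2\mathds{B})$ do not all share a sign---should emerge from requiring the inductor and the capacitor to be simultaneously positive: the solved expressions for the two reactive values are ratios whose numerators and denominators are built from these three quantities, and a consistent positive assignment of opposite-type elements is possible exactly when their signs are not uniform.

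For sufficiency I would reverse the construction, select the admissible root of the quadratic, and verify positivity and finiteness of all five elements, discharging the resulting inequalities through the canonical form \eqref{eq: canonical form} via \eqref{eq: from Z to Zc}, using the dictionary that sends $\mathds{D}_b$, $\mathds{E}_a$, and $(\mathds{R} - 2a_0b_2\mathds{B})$ to $\lambda_c/W$, $\lambda_c^{\dag}$, and $\zeta_{c}$ recorded in Section~\ref{sec: A Canonical Biquadratic Form}, exactly as in Theorem~\ref{theorem: condition of the No. 86 network}.

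I expect the main obstacle to be the degenerate boundary case $\mathds{R} - 2a_0b_2\mathds{B} = 0$, handled by the final clause of the statement. When $(\mathds{R} - 2a_0b_2\mathds{B})$ vanishes, one of the reactive-element expressions degenerates and the generic sign bookkeeping breaks down, so one must show by a separate limiting analysis that both reactive values can remain strictly positive only when $\mathds{D}_b \mathds{E}_a < 0$. Since the statement is quoted from \cite{JS11}, the complete computation is available there; the outline above indicates how it conforms to the present paper's uniform method.
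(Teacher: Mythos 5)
You should first note that the paper itself contains no proof of this lemma: it is imported verbatim from \cite{JS11}, and the surrounding text states that the realizability condition of Fig.~\ref{fig: Quartet-01-different-kind} ``has already been established'' there. So there is no in-paper argument to compare your outline against; the honest comparison is with the paper's treatment of the sibling configurations, and on that score your plan is the right one --- it is exactly the coefficient-matching scheme of Theorem~\ref{theorem: condition of the No. 85 network}, Lemmas~\ref{lemma: condition of the No. 86 network} and \ref{lemma: further condition of the No. 104 network}, and Theorem~\ref{theorem: condition of the quartet 03}: match the six coefficients with a scaling $k=1/b_0$, eliminate down to a single governing quadratic, obtain $\mathds{R}<0$ from Lemma~\ref{lemma: type of elements}, flip the discriminant inequality to $\leq 0$ accordingly, and extract the remaining sign conditions from positivity of the element values.

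That said, as a proof the proposal is a skeleton whose load-bearing members are conjectured rather than computed. The claims that the discriminant of the governing quadratic factors as a positive multiple of $\mathds{R}\,(\mathds{R}-4a_0b_2(a_1b_1+2\mathds{B}))$, that positivity of the resistors forces $\mathds{B}<0$, and that the inductor and capacitor values are ratios built precisely from $\mathds{D}_b$, $\mathds{E}_a$, and $(\mathds{R}-2a_0b_2\mathds{B})$ are each nontrivial algebraic identities that must actually be verified for this topology; analogy with Figs.~\ref{fig: Quartet-01-same-kind} and \ref{fig: Quartet-03-different-kind} makes them plausible but does not establish them. The sufficiency direction is also harder than the sketch suggests: as in Theorem~\ref{theorem: condition of the No. 86 network}, one must show that the stated coefficient conditions guarantee that \emph{some} root of the quadratic satisfies all of the positivity constraints simultaneously, which in the paper's sibling proofs requires evaluating the quadratic at test points such as $a_0/b_0$ and $a_2/b_2$ and carrying out a case analysis on the signs involved. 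Finally, the degenerate clause $\mathds{R}-2a_0b_2\mathds{B}=0 \Rightarrow \mathds{D}_b\mathds{E}_a<0$ is correctly flagged as an obstacle but left unresolved. In short, the outline is a faithful reconstruction of the method the paper uses elsewhere, and nothing in it points in a wrong direction, but it is not yet a proof; the complete computation indeed resides in \cite{JS11}.
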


By the star-mesh transformation \cite{Ver70}, it can be verified that the configuration  in Fig.~\ref{fig: Quartet-02-different-kind}(a) is equivalent to that in Fig.~\ref{fig: network of No. 104}. The element values for configurations in Figs.~\ref{fig: Quartet-02-different-kind}--\ref{fig: network of No. 104} have been listed in \cite{Lad48}, without any detail of derivation.

\begin{figure}[thpb]
      \centering
      \includegraphics[scale=1.2]{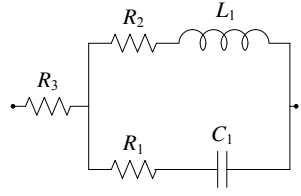}
      \caption{The two-reactive five-element series-parallel configuration that is equivalent to
      Fig.~\ref{fig: Quartet-02-different-kind}(a), which is No.~104 configuration in \cite{Lad48}.}
      \label{fig: network of No. 104}
\end{figure}

\begin{lemma}
\label{lemma: further condition of the No. 104 network}
{A biquadratic impedance $Z(s) \in \mathcal{Z}_b$ is realizable as the configuration in Fig.~\ref{fig: network of No. 104} if and only if
$\mathds{R} < 0$ and one of the following two conditions is satisfied:
\begin{enumerate}
  \item[1.] $\Gamma_a < 0$, either $\mathds{D}_b > 0$ for
  $\mathds{B} < 0$ or $\mathds{E}_b > 0$ for $\mathds{B} > 0$, and the signs of $\Delta_b$, $-\Delta_{ab}$, and $\Gamma_a$ are not all the same (when one of them is zero, the other two are nonzero and have different signs);
  \item[2.] $\Gamma_a > 0$, $\Delta_b > 0$, $\Delta_{ab} > 0$, and either $\mathds{D}_b + b_0 \mathds{B} < 0$ for $\mathds{B} < 0$ or $\mathds{E}_b - b_2 \mathds{B} < 0$ for $\mathds{B} > 0$.
\end{enumerate}
Furthermore, if the above condition is satisfied,
then the values of the elements are expressed as
\begin{subequations}
\begin{align}
R_1 &= \frac{a_2 - b_2R_3}{b_2},     \label{eq: No. 104 values R1}     \\
R_2 &= \frac{a_0 - b_0R_3}{b_0},     \label{eq: No. 104 values R2}     \\
L_1 &= \frac{\mathds{M}-2b_0b_2R_3}{b_0b_1},  \label{eq: No. 104 values L1}     \\
C_1 &= \frac{b_1b_2}{\mathds{M}-2b_0b_2R_3},  \label{eq: No. 104 values C1}
\end{align}
\end{subequations}
and $R_3$ is a positive root of
\begin{equation}  \label{eq: equation for the root No. 104}
b_0b_2 \Delta_b R_3^2 - 2b_0b_2\Delta_{ab} R_3 + \Gamma_a = 0,
\end{equation}
satisfying
\begin{equation}
R_3 < \min\{a_2/b_2, a_0/b_0\}.  \label{eq: No. 104 positive restriction}
\end{equation}}
\end{lemma}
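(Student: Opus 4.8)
The plan is to follow the coefficient-matching scheme already used for Theorem~\ref{theorem: condition of the No. 85 network} and Lemma~\ref{lemma: condition of the No. 86 network}. First I would write the impedance of the configuration in Fig.~\ref{fig: network of No. 104} as a ratio $a(s)/b(s)$ and equate it to $k(a_2s^2+a_1s+a_0)/(b_2s^2+b_1s+b_0)$, producing six equations in $R_1,R_2,R_3,L_1,C_1,k$. The constant term of the denominator fixes $k=1/b_0$; two further equations solve $R_1,R_2$ as in \eqref{eq: No. 104 values R1}--\eqref{eq: No. 104 values R2}, and the linear-coefficient equations give $L_1,C_1$ as in \eqref{eq: No. 104 values L1}--\eqref{eq: No. 104 values C1}. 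Substituting everything into the last remaining relation yields the quadratic \eqref{eq: equation for the root No. 104} in $R_3$. Positivity of the five elements then reduces, after noting that $\mathds{M}/(2b_0b_2)$ is the average of $a_0/b_0$ and $a_2/b_2$ and hence at least their minimum, exactly to the range \eqref{eq: No. 104 positive restriction}, i.e. $0<R_3<\min\{a_2/b_2,a_0/b_0\}$. Since the network carries one inductor and one capacitor, Lemma~\ref{lemma: type of elements} supplies $\mathds{R}<0$ for free.

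The problem thus becomes: decide when the quadratic $f(R_3):=b_0b_2\Delta_b R_3^2-2b_0b_2\Delta_{ab}R_3+\Gamma_a$ possesses a root in $(0,\min\{a_2/b_2,a_0/b_0\})$. The key computations, done once and reused, are the discriminant and the boundary data. I expect the discriminant of \eqref{eq: equation for the root No. 104} to collapse to $-4b_0b_1^2b_2\mathds{R}$, which is positive since $\mathds{R}<0$, so there are always two distinct real roots. Evaluating $f$ at the two candidate endpoints should give the clean identities $b_2 f(a_2/b_2)=\mathds{B}\mathds{E}_b$ and $b_0 f(a_0/b_0)=-\mathds{B}\mathds{D}_b$, together with $f(0)=\Gamma_a$, while the slopes are $f'(a_2/b_2)=-2b_0(\mathds{E}_b-b_2\mathds{B})$ and $f'(a_0/b_0)=-2b_2(\mathds{D}_b+b_0\mathds{B})$. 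Because $\mathds{B}=a_0b_2-a_2b_0$ fixes the order of the endpoints, the binding (smaller) endpoint is $a_2/b_2$ when $\mathds{B}>0$ and $a_0/b_0$ when $\mathds{B}<0$; this is exactly what makes the conditions split on the sign of $\mathds{B}$ and express themselves through $\mathds{E}_b$ (resp. $\mathds{D}_b$).

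With these data the root-location analysis divides according to the sign of $f(0)=\Gamma_a$. When $\Gamma_a<0$ (Case~1) the parabola is negative at the origin, so a sign change on the interval forces the value at the binding endpoint to be positive, which is precisely $\mathds{E}_b>0$ for $\mathds{B}>0$ or $\mathds{D}_b>0$ for $\mathds{B}<0$; the requirement that the relevant root be positive is then exactly the stipulation that the signs of $\Delta_b$, $-\Delta_{ab}$, $\Gamma_a$ not all agree (vacuous when the parabola opens upward, becoming the positive-sum condition when it opens downward, with the parenthetical covering the degenerate vanishings). When $\Gamma_a>0$ (Case~2) both roots must be positive, forcing $\Delta_b>0$ and $\Delta_{ab}>0$, and the smaller root lies below the binding endpoint exactly when the slope there is positive, i.e. $\mathds{E}_b-b_2\mathds{B}<0$ (resp. $\mathds{D}_b+b_0\mathds{B}<0$). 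The hard part is showing these value/slope conditions are necessary and not merely sufficient: the threat is a configuration where the endpoint lies beyond the larger root, so a root still sits in the interval although the stated inequality fails. I would kill this by contradiction: if, say for $\mathds{B}>0$, the endpoint were at or left of the vertex ($f'\le0$, i.e. $\mathds{E}_b\ge b_2\mathds{B}>0$) yet strictly between the roots ($f(\min)<0$, i.e. $\mathds{E}_b<0$), then $\mathds{E}_b$ would be simultaneously positive and negative; the symmetric argument in $\mathds{D}_b$ handles $\mathds{B}<0$ and also excludes the spurious $\Delta_b\le0$ sub-cases under $\Gamma_a>0$.

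Finally, for sufficiency I would reverse the reasoning: given $\mathds{R}<0$ together with the Case~1 or Case~2 hypotheses, positivity of the discriminant guarantees two real roots, the sign data above single out a specific root in $(0,\min\{a_2/b_2,a_0/b_0\})$, and inserting it into \eqref{eq: No. 104 values R1}--\eqref{eq: No. 104 values C1} renders every element positive and finite; a direct substitution then confirms the resulting impedance equals $Z(s)$. I anticipate the sign bookkeeping in the two cases, rather than any single algebraic identity, to be the principal obstacle.
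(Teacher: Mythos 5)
Your proposal is correct and follows the paper's skeleton almost exactly: the same six coefficient-matching equations, $k=1/b_0$, the same explicit element values \eqref{eq: No. 104 values R1}--\eqref{eq: No. 104 values C1}, the reduction of positivity to $0<R_3<\min\{a_2/b_2,a_0/b_0\}$, the same quadratic \eqref{eq: equation for the root No. 104} with discriminant $-4b_0b_1^2b_2\mathds{R}$, and the same endpoint identities $b_0 f(a_0/b_0)=-\mathds{B}\mathds{D}_b$, $b_2 f(a_2/b_2)=\mathds{B}\mathds{E}_b$, $f(0)=\Gamma_a$. Where you genuinely diverge is in the third piece of data used to pin down the root location: the paper evaluates $f$ at the \emph{average} of the two endpoints, $R_3=\mathds{M}/(2b_0b_2)$, obtaining the identity $f(\mathds{M}/(2b_0b_2))=b_1^2\mathds{B}^2/(4b_0b_2)>0$, which says the midpoint of $a_0/b_0$ and $a_2/b_2$ always lies strictly between the two roots; this single fact is then reused to force $\Delta_b>0$, $\Delta_{ab}>0$ in Case~2 and to rule out two roots inside the interval in Case~1. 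You instead compute the slopes $f'(a_2/b_2)=-2b_0(\mathds{E}_b-b_2\mathds{B})$ and $f'(a_0/b_0)=-2b_2(\mathds{D}_b+b_0\mathds{B})$ (both identities check out) and run value-versus-slope contradictions at the binding endpoint. Your route has the advantage that the slope expressions \emph{are} literally the quantities $\mathds{D}_b+b_0\mathds{B}$ and $\mathds{E}_b-b_2\mathds{B}$ appearing in condition~2, so no translation step is needed; the paper's midpoint identity is the more economical single lemma. One caution: in Case~1 your phrase ``a sign change on the interval forces the value at the binding endpoint to be positive'' silently assumes there is only one root in $(0,\min)$; when $\Delta_b<0$ a downward parabola could in principle place \emph{both} roots inside the interval, giving $f(\min)<0$. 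Your own slope mechanism does kill this sub-case (it would require $f'(\min)<0$, i.e.\ $\mathds{E}_b>b_2\mathds{B}>0$, simultaneously with $f(\min)<0$, i.e.\ $\mathds{E}_b<0$), but you only articulate that contradiction for the $\Gamma_a>0$ case; you should state it for $\Gamma_a<0$ as well, where the paper's midpoint evaluation is doing that job.
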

\begin{proof}
\textit{Necessity.}
The impedance of the configuration in Fig.~\ref{fig: network of No. 104} is obtained as
\begin{equation}   \label{eq: general impedance of No. 104 network}
Z(s) = \frac{a(s)}{b(s)},
\end{equation}
where $a(s) = (R_1+R_3)L_1C_1s^2 + ((R_1R_3+R_2R_3+R_1R_2)C_1
+L_1)s + (R_2+R_3)$ and $b(s) = L_1C_1s^2 + (R_1+R_2)C_1s + 1$.
Thus,
\begin{subequations}
\begin{align}
(R_1+R_3)L_1C_1 &= ka_2, \label{eq: No. 104 ka2} \\
(R_1R_3+R_2R_3+R_1R_2)C_1 +L_1 &= ka_1, \label{eq: No. 104 ka1}  \\
R_2+R_3 &= ka_0,   \label{eq: No. 104 ka0} \\
L_1C_1 &= kb_2,   \label{eq: No. 104 kb2} \\
(R_1+R_2)C_1 &= kb_1,   \label{eq: No. 104 kb1}  \\
1 &= kb_0.   \label{eq: No. 104 kb0}
\end{align}
\end{subequations}
From \eqref{eq: No. 104 kb0}, one obtains
\begin{equation}  \label{eq: No. 104 k}
k = \frac{1}{b_0}.
\end{equation}
Based on \eqref{eq: No. 104 ka2} and \eqref{eq: No. 104 kb2}, one concludes  that $R_1$ satisfies \eqref{eq: No. 104 values R1}. From \eqref{eq: No. 104 ka0}, it follows that $R_2$ satisfies
\eqref{eq: No. 104 values R2}. Therefore,  condition~\eqref{eq: No. 104 positive restriction} must hold.
Substituting \eqref{eq: No. 104 values R1}, \eqref{eq: No. 104 values R2}, and \eqref{eq: No. 104 k} into \eqref{eq: No. 104 kb1} yields
the value of $C_1$ as \eqref{eq: No. 104 values C1}. As a result, the value of $L_1$ is obtained from
\eqref{eq: No. 104 kb2} as \eqref{eq: No. 104 values L1}. Finally, substituting \eqref{eq: No. 104 values R1}--\eqref{eq: No. 104 values C1} into \eqref{eq: No. 104 ka1} yields
\eqref{eq: equation for the root No. 104}.
It follows that the discriminant of  \eqref{eq: equation for the root No. 104} in $R_3$ is
\begin{equation}   \label{eq: discriminant of No. 104}
\delta  = (2b_0b_2\Delta_{ab})^2 - 4b_0b_2\Delta_b \Gamma_a
        = -4b_0b_1^2b_2\mathds{R}.
\end{equation}
Since  \eqref{eq: equation for the root No. 104} must have at least one positive root, one concludes that $\mathds{R} < 0$, and at most one of $\Delta_b$, $\Delta_{ab}$, and $\Gamma_a$ is zero.
Substituting $R_3 = a_0/b_0$, $R_3 = a_2/b_2$, and
$R_3 = (a_0/b_0 + a_2/b_2)/2 = \mathds{M}/(2b_0b_2)$  into the left-hand side of \eqref{eq: equation for the root No. 104}, one obtains, respectively,
\begin{align}
\left.b_0b_2 \Delta_b R_3^2 - 2b_0b_2\Delta_{ab} R_3 + \Gamma_a
\right|_{R_3 = \frac{a_0}{b_0}}
&= -\frac{\mathds{B}\mathds{D}_b}{b_0},
\label{eq: subs R3 01 No. 104}
\\
\left.b_0b_2 \Delta_b R_3^2 - 2b_0b_2\Delta_{ab} R_3 + \Gamma_a
\right|_{R_3 = \frac{a_2}{b_2}}
&= \frac{\mathds{B}\mathds{E}_b}{b_2},
\label{eq: subs R3 02 No. 104}
\\
\left.b_0b_2 \Delta_b R_3^2 - 2b_0b_2\Delta_{ab} R_3 + \Gamma_a
\right|_{R_3 = \frac{\mathds{M}}{2b_0b_2}}
&= \frac{b_1^2 \mathds{B}^2}{4b_0b_2} > 0.
\label{eq: subs R3 03 No. 104}
\end{align}

Since the condition of Lemma~\ref{lemma: condition of at most four} does not hold, $\Gamma_a \neq 0$.
When $\Gamma_a < 0$, based on \eqref{eq: subs R3 03 No. 104} it follows that  \eqref{eq: equation for the root No. 104} has only one positive root in $R_3$ such that \eqref{eq: No. 104 positive restriction} holds. Therefore, the signs of $\Delta_b$, $-\Delta_{ab}$, and $\Gamma_a$ are not all the same (when one of them is zero, the other two are nonzero and have different signs). Moreover, if
$\mathds{B} < 0$, then $a_0/b_0 < \mathds{M}/(2b_0b_2) < a_2/b_2$, implying that
$\mathds{D}_b > 0$ to guarantee \eqref{eq: subs R3 01 No. 104} to be positive; if
$\mathds{B} > 0$, then $a_0/b_0 > \mathds{M}/(2b_0b_2) >  a_2/b_2$, implying that
$\mathds{E}_b > 0$ to guarantee \eqref{eq: subs R3 02 No. 104} to be positive.

When $\Gamma_a > 0$, based on \eqref{eq: subs R3 03 No. 104} it follows that $\Delta_b > 0$ and $-\Delta_{ab}<0$. If
$\mathds{B} < 0$, then
$a_0/b_0 < \mathds{M}/(2b_0b_2) < a_2/b_2$. Therefore, in either the case when \eqref{eq: subs R3 01 No. 104} is negative or the case when \eqref{eq: subs R3 01 No. 104} is nonnegative, one has  $\Delta_{ab}/\Delta_b < a_0/b_0$ holds. The above two cases correspond to
$-b_1 \mathds{A} < -b_0 \mathds{B}$ and $-b_0 \mathds{B} \leq - b_1 \mathds{A} < - 2 b_0 \mathds{B}$, respectively.  Hence, combining them yields
$-b_1 \mathds{A} < -2 b_0 \mathds{B}$,  which is equivalent to
$\mathds{D}_b + b_0\mathds{B} < 0$. Similarly, if
$\mathds{B} > 0$, then  $\mathds{E}_b - b_2\mathds{B} < 0$.

\textit{Sufficiency.}
Let the values of the elements in Fig.~\ref{fig: network of No. 104} satisfy \eqref{eq: No. 104 values R1}--\eqref{eq: No. 104 values C1}, and $R_3$ be a positive root of \eqref{eq: equation for the root No. 104} satisfying \eqref{eq: No. 104 positive restriction}. Then, $a_2 - b_2R_3 > 0$, $a_0 - b_0R_3 > 0$, and $\mathds{M} - 2b_0b_2R_3 > 0$. Letting $k$ satisfy \eqref{eq: No. 104 k}, it can be verified that \eqref{eq: No. 104 ka2}--\eqref{eq: No. 104 kb0} hold.
$\mathds{R} < 0$ implies that the discriminant of   \eqref{eq: equation for the root No. 104} as expressed in \eqref{eq: discriminant of No. 104} is positive.

If condition~1  is satisfied, then
as discussed in the necessity part  there exists a unique positive root of  \eqref{eq: equation for the root No. 104} in terms of $R_3$ such that \eqref{eq: No. 104 positive restriction} holds.

If condition~2 holds, and either
$-b_1 \mathds{A} < -b_0 \mathds{B}$ for $\mathds{B} < 0$ or
$-b_2 \mathds{B} < -b_1 \mathds{C}$ for $\mathds{B} > 0$, then it can be proved that there exists a unique positive root for  \eqref{eq: equation for the root No. 104} in terms of $R_3$ such that \eqref{eq: No. 104 positive restriction} holds.

If condition~2 holds, and either
$-b_0 \mathds{B} \leq -b_1 \mathds{A} < -2b_0 \mathds{B}$ for $\mathds{B} < 0$
or $-2b_2 \mathds{B} < -b_1 \mathds{C} \leq -b_2 \mathds{B}$
for $\mathds{B} > 0$, then there are two positive roots for \eqref{eq: equation for the root No. 104} in terms of $R_3$ such that \eqref{eq: No. 104 positive restriction} holds.

As a conclusion, the values of elements must be positive and finite. The given impedance $Z(s)$ is realizable as the specified network.
\end{proof}

The values of elements in Fig.~\ref{fig: Quartet-02-different-kind}(a) can be obtained from those in Fig.~\ref{fig: network of No. 104} via the following transformation: $R_P/R_2 \rightarrow R_1$, $R_P/R_3 \rightarrow R_2$, $R_P/R_1 \rightarrow R_3$, $C_1 \rightarrow C_1$, and $L_1 \rightarrow L_1$, where $R_P = R_1R_2 + R_2R_3 + R_3R_1$.

Since the realizability condition of the configuration in Fig.~\ref{fig: Quartet-02-different-kind}(a) is equivalent to that of Lemma~\ref{lemma: further condition of the No. 104 network}, a necessary and sufficient condition for the realizability of the configurations in Fig.~\ref{fig: Quartet-02-different-kind} is obtained as follows.

\begin{theorem}  \label{theorem: condition of the quartet 02}
{A biquadratic impedance $Z(s) \in \mathcal{Z}_b$ is realizable as one of  the configurations in Fig.~\ref{fig: Quartet-02-different-kind} if and only if
$\mathds{R} < 0$ and one of the following three conditions
is satisfied:
\begin{enumerate}
  \item[1.] $\Gamma_a < 0$, either $\mathds{D}_b > 0$ for $\mathds{B} < 0$ or $\mathds{E}_b > 0$ for $\mathds{B} > 0$, and the signs of $\Delta_b$, $-\Delta_{ab}$, and $\Gamma_a$ are not all the same (when one of them is zero, the other two are nonzero and have different signs);
  \item[2.] $\Gamma_b < 0$, either $\mathds{D}_a > 0$ for $\mathds{B} > 0$ or $\mathds{E}_a > 0$ for $\mathds{B} < 0$, and the signs of $\Delta_a$, $-\Delta_{ab}$, and $\Gamma_b$ are not all the same (when one of them is zero, the other two are nonzero and have different signs);
  \item[3.] $\Gamma_a > 0$, $\Gamma_b > 0$, and $\Delta_{ab} > 0$.
\end{enumerate}}
\end{theorem}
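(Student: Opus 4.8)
The plan is to derive the realizability condition for each configuration in Fig.~\ref{fig: Quartet-02-different-kind} separately and then combine them. For Fig.~\ref{fig: Quartet-02-different-kind}(a), the star-mesh equivalence with Fig.~\ref{fig: network of No. 104} noted above shows it is realizable if and only if the condition of Lemma~\ref{lemma: further condition of the No. 104 network} holds, i.e.\ $\mathds{R}<0$ together with condition~1 or condition~2 of that lemma; condition~1 there is precisely condition~1 of the present theorem. For Fig.~\ref{fig: Quartet-02-different-kind}(b), whose graph is $\text{Dual}(\mathcal{N}_4)$, I would apply the principle of duality ($a_2\leftrightarrow b_2$, $a_1\leftrightarrow b_1$, $a_0\leftrightarrow b_0$) to Lemma~\ref{lemma: further condition of the No. 104 network}. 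First I would record the transformation of the relevant quantities: $\mathds{R}$ and $\Delta_{ab}$ are invariant, $\mathds{A},\mathds{B},\mathds{C}$ change sign, $\mathds{D}_a\leftrightarrow\mathds{D}_b$, $\mathds{E}_a\leftrightarrow\mathds{E}_b$, $\Delta_a\leftrightarrow\Delta_b$, and $\Gamma_a\leftrightarrow\Gamma_b$. Under this substitution, condition~1 of Lemma~\ref{lemma: further condition of the No. 104 network} becomes condition~2 of the present theorem, and its condition~2 becomes a dual form, denoted $(\mathrm{L2}')$: $\Gamma_b>0$, $\Delta_a>0$, $\Delta_{ab}>0$, and either $\mathds{D}_a-a_0\mathds{B}<0$ for $\mathds{B}>0$ or $\mathds{E}_a+a_2\mathds{B}<0$ for $\mathds{B}<0$. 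Writing $(\mathrm{L2})$ for condition~2 of Lemma~\ref{lemma: further condition of the No. 104 network}, I conclude that $Z(s)$ is realizable as one of the configurations in Fig.~\ref{fig: Quartet-02-different-kind} if and only if $\mathds{R}<0$ and at least one of the present theorem's conditions~1, 2, and $(\mathrm{L2})$, $(\mathrm{L2}')$ holds.

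It then remains to show that, under $\mathds{R}<0$, the four-way disjunction of conditions~1, 2, $(\mathrm{L2})$, $(\mathrm{L2}')$ coincides with the three-way disjunction of conditions~1, 2, and 3, i.e.
\[
\text{(cond.~1)}\lor\text{(cond.~2)}\lor(\mathrm{L2})\lor(\mathrm{L2}')\;\Longleftrightarrow\;\text{(cond.~1)}\lor\text{(cond.~2)}\lor\text{(cond.~3)}.
\]
For the forward implication I would show $(\mathrm{L2})\Rightarrow\text{(cond.~2)}\lor\text{(cond.~3)}$ and, dually, $(\mathrm{L2}')\Rightarrow\text{(cond.~1)}\lor\text{(cond.~3)}$. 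Assuming $(\mathrm{L2})$ gives $\Gamma_a>0$, $\Delta_b>0$, $\Delta_{ab}>0$; if also $\Gamma_b>0$ then condition~3 holds, while if $\Gamma_b\le0$ then in fact $\Gamma_b<0$, since $Z(s)\in\mathcal{Z}_b$ rules out $\Gamma_a\Gamma_b=0$ by Lemma~\ref{lemma: condition of at most four}, and $\Gamma_a=\mathds{R}+b_0b_2\Delta_a>0$ with $\mathds{R}<0$ forces $\Delta_a>0$, so the signs of $\Delta_a,-\Delta_{ab},\Gamma_b$ are $+,-,-$ and are not all the same; it then remains to upgrade the extra inequality of $(\mathrm{L2})$ to the $\mathds{D}_a/\mathds{E}_a$ inequality of condition~2 on this region. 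For the reverse implication, conditions~1 and 2 are immediate, and for condition~3 I must show that $\mathds{R}<0$, $\Gamma_a>0$, $\Gamma_b>0$, $\Delta_{ab}>0$ force at least one of the four extra sign inequalities appearing in $(\mathrm{L2})$ and $(\mathrm{L2}')$.

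These remaining sign comparisons are exactly where the canonical form \eqref{eq: canonical form} is decisive, as announced in Section~\ref{sec: A Canonical Biquadratic Form}. I would transform every quantity through \eqref{eq: from Z to Zc}, using $\mathds{B}=W-W^{-1}$ (so $\mathds{B}>0\iff W>1$), $\mathds{R}=\mathds{R}_c$, $\Gamma_a=\Gamma_{a_c}$, $\Gamma_b=\Gamma_{b_c}$, $\Delta_{ab}=\Delta_{ab_c}$, and the representatives $\lambda_c^{\ast\dag}W,\lambda_c/W,\lambda_c^{\dag},\lambda_c^{\ast}$ of $\mathds{D}_a,\mathds{D}_b,\mathds{E}_a,\mathds{E}_b$. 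Since duality corresponds to the involution $(U,V,W)\mapsto(V,U,W^{-1})$, that is the operation ${}^{\ast\dag}$, each assertion for $W>1$ mirrors one for $W<1$ while condition~3 is ${}^{\ast\dag}$-invariant; this lets me treat only $\mathds{B}\ge0$, i.e.\ $W\ge1$, and recover the rest by duality. In this reduced setting the four extra inequalities become explicit polynomial inequalities in $U,V,W$, and both implications above reduce to a sign analysis of a few polynomials on the region cut out by $\mathds{R}_c<0$, $\Gamma_{a_c}>0$, $\Gamma_{b_c}>0$, $\Delta_{ab_c}>0$. I expect this sign analysis to be the main obstacle: concretely, proving that the sets $\{\mathds{E}_b-b_2\mathds{B}\ge0\}$ and $\{\mathds{D}_a-a_0\mathds{B}\ge0\}$ have empty intersection on that region (which yields the reverse implication for condition~3), and that $\Gamma_b<0$ converts the $(\mathrm{L2})$ inequality into the condition~2 inequality (the forward implication). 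I would control these by completing squares and exploiting the linear identities among the bracketed quantities, such as $b_1\mathds{D}_a+a_1\mathds{D}_b=-\mathds{A}\mathds{B}$, to keep the polynomial degrees low, and finally transform the simplified canonical conditions back through \eqref{eq: from Zc to Z} to recover conditions~1--3 as stated.
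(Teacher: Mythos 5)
Your proposal follows essentially the same route as the paper: conditions~1 and 2 are obtained from Lemma~\ref{lemma: further condition of the No. 104 network} and the principle of duality, and the remaining work is the equivalence of condition~3 with the disjunction of that lemma's condition~2 and its dual, settled by a sign analysis in the canonical coordinates $(U,V,W)$ with duality acting as $(U,V,W)\mapsto(V,U,W^{-1})$. The only organizational difference is in the forward implication: the paper shows directly that $(\mathrm{L2})$ forces $\Gamma_b>0$ (so the $\Gamma_b<0$ subcase in which you leave the ``inequality upgrade'' to condition~2 unresolved is in fact empty), rather than routing through condition~2.
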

\begin{proof}
Conditions~1 and 2 can be obtained from
Lemma~\ref{lemma: further condition of the No. 104 network} based on the principle of duality \cite{CWLC15}.
To obtain condition~3, it suffices to show that
\begin{equation}  \label{eq: third condition canonical}
\Gamma_{a_c} > 0, \ \Gamma_{b_c} > 0, \ \Delta_{ab_c} > 0
\end{equation}
is equivalent to the union of the following two conditions:
\begin{enumerate}
  \item[$a$.] $\Gamma_{a_c} > 0$, $V > 1$, $\Delta_{ab_c} > 0$, and either $2UV-2WV^2+(W-W^{-1})<0$ for $W<1$ or
      $2UV-2W^{-1}V^2-(W-W^{-1}) < 0$ for
      $W>1$;
  \item[$b$.] $\Gamma_{b_c} > 0$, $U > 1$, $\Delta_{ab_c} > 0$, and either $2UV-2WU^2+(W-W^{-1}) < 0$ for $W<1$ or $2UV-2W^{-1}U^2-(W-W^{-1}) < 0$ for $W>1$.
\end{enumerate}
First, one can show that condition~$a$ or $b$ implies \eqref{eq: third condition canonical}.
Without loss of generality,  assume that  $W > 1$. Then, one obtains $W+W^{-1} < 2UV < (W-W^{-1})+2V^2W^{-1}$ from condition~$a$.
Hence, it follows from condition~$a$ that
$\Gamma_{b_c} = - 4 U^2 + 4UV(W+W^{-1}) - (W+W^{-1})^2 = - V^{-2}(2UV - V(V-\sqrt{V^2 -1})(W+W^{-1}))(2UV - V(V+\sqrt{V^2 -1})(W+W^{-1})) > 0$,
since
$(W+W^{-1})  - V(V-\sqrt{V^2-1})(W+W^{-1}) = \sqrt{V^2 - 1}(V-\sqrt{V^2 - 1})(W+W^{-1}) > 0$ and
$(W-W^{-1}) + 2V^2 W^{-1} - V(V+\sqrt{V^2-1})(W+W^{-1}) =
- (W-W^{-1})(V^2-1)-V\sqrt{V^2-1}(W+W^{-1}) < 0$.
Therefore, condition~$a$  yields condition~\eqref{eq: third condition canonical}.
Similarly, condition~$b$ implies $\Gamma_{a_c} > 0$, which also yields condition~\eqref{eq: third condition canonical}. In addition, the case of $W < 1$ can be similarly  proved.

Now, it remains to show that condition~\eqref{eq: third condition canonical} implies condition~$a$ or $b$. Assume that $W > 1$. Since $\Gamma_{a_c} > 0$ and $\Gamma_{b_c} > 0$ can  yield respectively $U > 1$ and $V > 1$, if $2UV - 2W^{-1}V^2 - (W-W^{-1}) < 0$ then condition~$a$ holds. Otherwise, one obtains
$U - \sqrt{U^2 - 2W^{-1}(W-W^{-1})} \leq 2V W^{-1} \leq
U + \sqrt{U^2 - 2W^{-1}(W-W^{-1})}$. It can be verified that
$U(W + W^{-1}) + (W - W^{-1})\sqrt{U^2 - 1} - W (U + \sqrt{U^2 - 2W^{-1}(W-W^{-1})}) > 0$. Together with $\mathds{R}_c < 0$, one has
$V < (U(W+W^{-1})-(W-W^{-1})\sqrt{U^2-1})/2$, which implies that
$2 U V - 2 W^{-1} U^2 - (W - W^{-1}) < U^2(W+W^{-1}) - (W - W^{-1}) U \sqrt{U^2 - 1} - 2W^{-1} U^2 - (W - W^{-1}) = (W-W^{-1})\sqrt{U^2- 1}(\sqrt{U^2 - 1}-U) < 0$. Hence, condition~$b$ is obtained. The case of $W < 1$ can be similarly proved.
\end{proof}

\begin{theorem}  \label{theorem: condition of the quartet 03}
{A biquadratic impedance $Z(s) \in \mathcal{Z}_b$ is realizable as the configuration in Fig.~\ref{fig: Quartet-03-different-kind} if and only if $\mathds{R} < 0$ and the signs of $\Gamma_a$, $\Gamma_b$, and $(\mathds{M}\mathds{R} + 2a_0a_2b_0b_2\Delta_{ab})$ are not all the same (when $\mathds{M}\mathds{R} + 2a_0a_2b_0b_2\Delta_{ab} = 0$, $\Gamma_a \Gamma_b < 0$). Furthermore, if the above condition holds, then the values of the elements are expressed as
\begin{subequations}
\begin{align}
R_1 &= \frac{a_0}{b_0},
\label{eq: No. 108 R1}    \\
R_3 &= \frac{a_2}{b_2},
\label{eq: No. 108 R3}    \\
L_1 &=
\frac{(a_1a_2b_0 + a_0 \mathds{C})R_2 + a_0a_1a_2}
{(b_0R_2+a_0)\mathds{M}},
\label{eq: No. 108 L1}    \\
C_1 &= \frac{b_0b_1b_2R_2 + (a_0b_1b_2 - b_0\mathds{C})}
{(b_0R_2+a_0)\mathds{M}},
\label{eq: No. 108 C1}
\end{align}
\end{subequations}
and $R_2$ is a positive root of
\begin{equation}   \label{eq: equation for the root QuadEq}
b_0b_2\Gamma_a R_2^2 + (\mathds{M}\mathds{R} + 2a_0a_2b_0b_2\Delta_{ab}) R_2 + a_0a_2\Gamma_b = 0,
\end{equation}
satisfying
\begin{subequations}
\begin{align}
b_0b_1b_2R_2 + (a_0b_1b_2 - b_0\mathds{C})   &> 0,
\label{eq: No. 108 positive condition 01}  \\
(a_1a_2b_0 + a_0 \mathds{C})R_2 + a_0a_1a_2  &> 0.
\label{eq: No. 108 positive condition 02}
\end{align}
\end{subequations}
}
\end{theorem}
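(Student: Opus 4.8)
The plan is to mirror the coefficient-matching argument of Theorem~\ref{theorem: condition of the No. 85 network}, now applied to the single-inductor single-capacitor bridge of Fig.~\ref{fig: Quartet-03-different-kind}. First I would write its impedance as $Z(s)=a(s)/b(s)$ with $a(s)$ and $b(s)$ quadratic in $s$ and polynomial in $R_1,R_2,R_3,L_1,C_1$, and equate $a(s)$ and $b(s)$ with $k(a_2s^2+a_1s+a_0)$ and $k(b_2s^2+b_1s+b_0)$, producing six scalar equations. Matching the constant term of $b(s)$ gives $k=1/b_0$; the bridge structure then forces $R_1=a_0/b_0$ and $R_3=a_2/b_2$, i.e.\ \eqref{eq: No. 108 R1}--\eqref{eq: No. 108 R3}. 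The two $s^1$-coefficient equations are linear in $L_1$ and $C_1$ and solve as \eqref{eq: No. 108 L1}--\eqref{eq: No. 108 C1}, and substituting these back into the one remaining equation yields the quadratic \eqref{eq: equation for the root QuadEq} in $R_2$.

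For necessity, Lemma~\ref{lemma: type of elements} gives $\mathds{R}<0$ at once, since the two reactive elements are of different types. The discriminant of \eqref{eq: equation for the root QuadEq} coincides with \eqref{eq: discriminant of No. 85}, namely $\mathds{M}^2\mathds{R}(\mathds{R}-4a_0a_2b_0b_2)$, which is strictly positive when $\mathds{R}<0$, so \eqref{eq: equation for the root QuadEq} always has two distinct real roots. Because $a_0,a_2,b_0,b_2>0$, the leading, middle, and constant coefficients of \eqref{eq: equation for the root QuadEq} carry the signs of $\Gamma_a$, $(\mathds{M}\mathds{R}+2a_0a_2b_0b_2\Delta_{ab})$, and $\Gamma_b$. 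A short Vieta analysis---the product of the roots is $\Gamma_b/\Gamma_a$ and their sum is $-(\mathds{M}\mathds{R}+2a_0a_2b_0b_2\Delta_{ab})/(b_0b_2\Gamma_a)$---then shows that a positive root exists exactly when these three signs are not all equal, the degenerate case $\mathds{M}\mathds{R}+2a_0a_2b_0b_2\Delta_{ab}=0$ forcing $\Gamma_a\Gamma_b<0$. This is the stated sign condition. For sufficiency I would take $R_2$ to be a positive root of \eqref{eq: equation for the root QuadEq} and set the element values by \eqref{eq: No. 108 R1}--\eqref{eq: No. 108 C1} with $k=1/b_0$; a direct substitution confirms the six matching equations, hence $Z(s)$ is exactly \eqref{eq: biquadratic impedance}. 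Since $\mathds{M}>0$, $R_2>0$, and $b_0R_2+a_0>0$, the denominators in \eqref{eq: No. 108 L1}--\eqref{eq: No. 108 C1} are positive, so $L_1>0$ and $C_1>0$ are equivalent to the positivity constraints \eqref{eq: No. 108 positive condition 01}--\eqref{eq: No. 108 positive condition 02}, while $R_1$ and $R_3$ are automatically positive.

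The hard part will be confirming that the sign condition yields not merely some positive root of \eqref{eq: equation for the root QuadEq}, but a positive root that additionally satisfies \eqref{eq: No. 108 positive condition 01}--\eqref{eq: No. 108 positive condition 02}. I expect to obtain a product identity of the form (scalar)$\cdot\mathds{R}=-(\text{perfect square})\,L_1C_1$, which together with $\mathds{R}<0$ forces $L_1C_1>0$, so that $L_1$ and $C_1$ share a common sign; it then remains to exclude the case where both are negative. As in the sufficiency part of Theorem~\ref{theorem: condition of the No. 85 network}, I would do this by evaluating the left-hand side of \eqref{eq: equation for the root QuadEq} at the breakpoints cut out by \eqref{eq: No. 108 positive condition 01} and \eqref{eq: No. 108 positive condition 02}, reducing these evaluations (via the definitions of $\mathds{A},\mathds{B},\mathds{C},\Gamma_a,\Gamma_b$) to quantities whose signs are pinned down by $\mathds{R}<0$ and by the relative signs of $\Gamma_a$ and $\Gamma_b$, thereby placing the admissible root inside the required interval and completing the equivalence.
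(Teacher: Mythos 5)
Your route is essentially the paper's: coefficient matching, $R_1=a_0/b_0$ and $R_3=a_2/b_2$ from ratios of the $s^0$ and $s^2$ equations, the discriminant $\mathds{M}^2\mathds{R}(\mathds{R}-4a_0a_2b_0b_2)$ combined with Lemma~\ref{lemma: type of elements} to get $\mathds{R}<0$, and a Vieta sign analysis of \eqref{eq: equation for the root QuadEq} for the existence of a positive root. However, there is one concrete error and one step left genuinely open. The error: for this bridge the constant coefficient of $b(s)$ is $R_2+R_3$, not $1$, so matching it gives $k=(R_2+R_3)/b_0=(b_2R_2+a_2)/(b_0b_2)$ as in \eqref{eq: k No. 108}, \emph{not} $k=1/b_0$. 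This configuration is precisely the one among the five-element bridges treated here where $k$ is not $1/b_0$. Since $k$ enters the two $s^1$ equations from which $L_1$ and $C_1$ are solved, carrying $k=1/b_0$ forward would not reproduce \eqref{eq: No. 108 L1}--\eqref{eq: No. 108 C1} or the quadratic \eqref{eq: equation for the root QuadEq}; only $R_1=a_0/b_0$ and $R_3=a_2/b_2$ survive, because they come from ratios in which $k$ cancels.

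The open step is the one you yourself flag as the hard part. You correctly anticipate the identity forcing $L_1C_1>0$ (the paper uses $-k^4\mathds{R}=(R_1+R_2)^2(R_2+R_3)^2(R_1R_3C_1-L_1)^2L_1C_1$), which reduces matters to excluding the case where the numerators $\chi_1:=b_0b_1b_2R_2+(a_0b_1b_2-b_0\mathds{C})$ and $\chi_2:=(a_1a_2b_0+a_0\mathds{C})R_2+a_0a_1a_2$ are both negative. Your plan to do this by evaluating \eqref{eq: equation for the root QuadEq} at the breakpoints of \eqref{eq: No. 108 positive condition 01}--\eqref{eq: No. 108 positive condition 02} is not carried out and would be awkward: those breakpoints need not be positive and their locations depend on the sign of $\mathds{C}$, so the interval argument that worked for Theorem~\ref{theorem: condition of the No. 85 network} does not transfer cleanly. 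The paper instead closes this in one line with a positive linear combination: if $\chi_1<0$ and $\chi_2<0$, then $a_0a_2\chi_1+b_0b_2\chi_2=(a_1b_0b_2R_2+a_0a_2b_1)\mathds{M}<0$, contradicting the positivity of the coefficients and of $R_2$. With the corrected $k$ and this substitute for your breakpoint argument, your outline becomes the paper's proof.
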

\begin{proof}
\textit{Necessity.}
The impedance of the configuration in Fig.~\ref{fig: Quartet-03-different-kind} is given by
\begin{equation}   \label{eq: general impedance of No. 108 network}
Z(s) = \frac{a(s)}{b(s)},
\end{equation}
where $a(s) = (R_1+R_2)R_3L_1C_1s^2 + (R_1R_2R_3C_1 +
(R_1 + R_2 + R_3)L_1)s + (R_2+R_3)R_1$ and $b(s) = (R_1+R_2)L_1C_1s^2 + ((R_1R_2 + R_2R_3 + R_1R_3)C_1 + L_1)s+R_2+R_3$.
Thus,
\begin{subequations}
\begin{align}
(R_1+R_2)R_3L_1C_1 &= ka_2,  \label{eq: No. 108 ka2}  \\
R_1R_2R_3C_1 + (R_1 + R_2 + R_3)L_1 &= ka_1,  \label{eq: No. 108 ka1}  \\
(R_2+R_3)R_1 &= ka_0,  \label{eq: No. 108 ka0}  \\
(R_1+R_2)L_1C_1 &= kb_2, \label{eq: No. 108 kd2}   \\
(R_1R_2 + R_2R_3 + R_1R_3)C_1 + L_1 &= kb_1,  \label{eq: No. 108 kd1}  \\
R_2+R_3 &= kb_0.  \label{eq: No. 108 kd0}
\end{align}
\end{subequations}
From \eqref{eq: No. 108 ka2} and \eqref{eq: No. 108 kd2}, it follows that $R_3$ satisfies \eqref{eq: No. 108 R3}.
From \eqref{eq: No. 108 ka0} and \eqref{eq: No. 108 kd0}, it follows that $R_1$ satisfies \eqref{eq: No. 108 R1}.
Substituting \eqref{eq: No. 108 R3} into \eqref{eq: No. 108 kd0} yields
\begin{equation}  \label{eq: k No. 108}
k = \frac{b_2R_2 + a_2}{b_0b_2}.
\end{equation}
Therefore, $L_1$ and $C_1$  can be solved from
\eqref{eq: No. 108 ka1} and \eqref{eq: No. 108 kd1} as
\eqref{eq: No. 108 L1} and \eqref{eq: No. 108 C1}. The assumption that all the values of the elements are positive and finite implies conditions~\eqref{eq: No. 108 positive condition 01} and \eqref{eq: No. 108 positive condition 02}.
Substituting \eqref{eq: No. 108 R1}--\eqref{eq: No. 108 C1} into
\eqref{eq: No. 108 kd2} yields  \eqref{eq: equation for the root QuadEq}.
The discriminant of  \eqref{eq: equation for the root QuadEq} in terms of $R_2$ is obtained as
\begin{equation}    \label{eq: delta positive}
\begin{split}
\delta
=  \mathds{M}^2 \mathds{R} (\mathds{R} - 4a_0a_2b_0b_2).
\end{split}
\end{equation}
Since the discriminant must be nonnegative to guarantee the existence of real roots, together with Lemma~\ref{lemma: type of elements} one has $\mathds{R} < 0$, implying that $\delta > 0$ and at most one of $\Gamma_a$, $\Gamma_b$, and $(\mathds{M}\mathds{R} + 2a_0a_2b_0b_2\Delta_{ab})$ is zero.
If one of them is zero, then it is only possible that $\mathds{M}\mathds{R} + 2a_0a_2b_0b_2\Delta_{ab} = 0$ and $\Gamma_a \Gamma_b < 0$. If none of them is zero, then it follows that the signs of them cannot be the same to guarantee the existence of the positive root.

\textit{Sufficiency.}
Let the values of the elements in
Fig.~\ref{fig: Quartet-03-different-kind} be
\eqref{eq: No. 108 R1}--\eqref{eq: No. 108 C1}, and $R_2$ be a positive root of  \eqref{eq: equation for the root QuadEq} satisfying conditions~\eqref{eq: No. 108 positive condition 01} and \eqref{eq: No. 108 positive condition 02}. Let $k$ satisfy \eqref{eq: k No. 108}. It can be verified that \eqref{eq: No. 108 ka2}--\eqref{eq: No. 108 kd0} hold, implying that \eqref{eq: general impedance of No. 108 network} is equivalent to \eqref{eq: biquadratic impedance}.

It suffices to show that \eqref{eq: equation for the root QuadEq} always has a positive root, such that $R_1$, $R_3$, $L_1$, $C_1$ expressed as
\eqref{eq: No. 108 R1}--\eqref{eq: No. 108 C1} are positive. Since $R_1$ and $R_3$ expressed as \eqref{eq: No. 108 R1} and
\eqref{eq: No. 108 R3} are obviously positive, one only needs to discuss $L_1$ and  $C_1$.

It is not difficult to see that if the signs of $\Gamma_a$, $\Gamma_b$, and $(\mathds{M}\mathds{R} + 2a_0a_2b_0b_2\Delta_{ab})$ satisfy the given conditions, then  \eqref{eq: equation for the root QuadEq} must have at least one positive root, since $\mathds{R} < 0$ implies  that the discriminant of  \eqref{eq: equation for the root QuadEq} shown in \eqref{eq: delta positive} is always positive.
Furthermore,
$- k^4 \mathds{R} = (R_1+R_2)^2(R_2+R_3)^2(R_1R_3C_1-L_1)^2L_1C_1$.
Together with $\mathds{R} < 0$, it follows that    $L_1$ and $C_1$ are both positive or negative. Hence, $\chi_1\chi_2 > 0$, where $\chi_1 := b_0b_1b_2R_2 + (a_0b_1b_2 - b_0 \mathds{C})$ and $\chi_2  := (a_1a_2b_0 + a_0 \mathds{C})R_2 + a_0a_1a_2$.
Assume that $\chi_1 < 0$ and $\chi_2 < 0$. Then, by letting $a_0a_2 \chi_1 + b_0b_2 \chi_2$, one obtains
$a_0a_2 \chi_1 + b_0b_2 \chi_2 = (a_1b_0b_2 R_2 + a_0a_2b_1) \mathds{M} < 0$.
This contradicts   the assumption that all the coefficients are positive. Hence, $\chi_1 > 0$ and $\chi_2 > 0$, suggesting that $L_1 > 0$  and $C_1 > 0$.
\end{proof}

Combining Lemma~\ref{lemma: condition of the quartet 01}, Theorems~\ref{theorem: condition of the quartet 02} and \ref{theorem: condition of the quartet 03}, one obtains the following result.

\begin{theorem}  \label{theorem: main theorem Res < 0}
{A biquadratic impedance $Z(s) \in \mathcal{Z}_b$ is realizable as a five-element bridge network containing one inductor and one capacitor if and only if $\mathds{R} < 0$, and $Z(s)$ is regular\footnote{
A necessary and sufficient condition for a biquadratic impedance to be regular is presented in \cite[Lemma~5]{JS11}.} or satisfies the condition of Lemma~\ref{lemma: condition of the quartet 01}.}
\end{theorem}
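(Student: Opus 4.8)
The plan is to combine the three configuration-level characterizations already established and then to reconcile their union with regularity. First I would apply Lemma~\ref{lemma: all the possible quartets for one inductor and one capacitor}, which asserts that $Z(s)\in\mathcal{Z}_b$ is realizable as a five-element bridge network with one inductor and one capacitor if and only if it is realizable as one of the configurations in Figs.~\ref{fig: Quartet-01-different-kind}, \ref{fig: Quartet-02-different-kind}, or \ref{fig: Quartet-03-different-kind}. Because each of these carries reactive elements of different types, Lemma~\ref{lemma: type of elements} makes $\mathds{R}<0$ necessary for realizability, so I may assume $\mathds{R}<0$ from here on. Since realizability as Fig.~\ref{fig: Quartet-01-different-kind} is exactly the condition of Lemma~\ref{lemma: condition of the quartet 01}, the theorem reduces to proving that, under $\mathds{R}<0$, the disjunction of the conditions of Theorems~\ref{theorem: condition of the quartet 02} and \ref{theorem: condition of the quartet 03} together with that of Lemma~\ref{lemma: condition of the quartet 01} is equivalent to the statement that $Z(s)$ is regular or satisfies the condition of Lemma~\ref{lemma: condition of the quartet 01}.

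The decisive input is the explicit regularity test for biquadratics from \cite{JS11}, which I would rewrite in the notation $\Gamma_a$, $\Gamma_b$, $\Delta_{ab}$, $\mathds{B}$, and $(\mathds{M}\mathds{R}+2a_0a_2b_0b_2\Delta_{ab})$ of Section~\ref{sec: Problem formulation}. The goal is to establish two implications. \emph{(A)} If $Z(s)$ is realizable as the configuration in Fig.~\ref{fig: Quartet-02-different-kind} or Fig.~\ref{fig: Quartet-03-different-kind}, then $Z(s)$ is regular or satisfies the condition of Lemma~\ref{lemma: condition of the quartet 01}. \emph{(B)} Conversely, if $\mathds{R}<0$ and $Z(s)$ is regular, then $Z(s)$ is realizable as one of the configurations in Figs.~\ref{fig: Quartet-01-different-kind}--\ref{fig: Quartet-03-different-kind}. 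Together with the reduction above, (A) and (B) give the stated equivalence.

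For (B) I expect the regularity test, specialized to $\mathds{R}<0$, to deliver directly the sign pattern required by Theorem~\ref{theorem: condition of the quartet 03} (signs of $\Gamma_a$, $\Gamma_b$, and $(\mathds{M}\mathds{R}+2a_0a_2b_0b_2\Delta_{ab})$ not all the same) or by the third alternative of Theorem~\ref{theorem: condition of the quartet 02} ($\Gamma_a>0$, $\Gamma_b>0$, $\Delta_{ab}>0$), so that a regular $Z(s)$ with $\mathds{R}<0$ is always realizable as Fig.~\ref{fig: Quartet-02-different-kind} or Fig.~\ref{fig: Quartet-03-different-kind}. For (A) the configuration of Fig.~\ref{fig: Quartet-03-different-kind} and the third alternative of Theorem~\ref{theorem: condition of the quartet 02} should map back onto regularity by the same computation; the genuinely new cases are alternatives~1 and 2 of Theorem~\ref{theorem: condition of the quartet 02}, in which $\Gamma_a<0$ or $\Gamma_b<0$. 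For these I would show that the accompanying sign constraints on $\mathds{D}_a$, $\mathds{D}_b$, $\mathds{E}_a$, $\mathds{E}_b$, $\Delta_a$, $\Delta_b$, and $\Delta_{ab}$ force the hypotheses of Lemma~\ref{lemma: condition of the quartet 01}, i.e.\ that the non-regular realizations via Fig.~\ref{fig: Quartet-02-different-kind} are always simultaneously realizable as Fig.~\ref{fig: Quartet-01-different-kind}. Throughout, passing to the canonical form $Z_c(s)$ of \eqref{eq: canonical form} reduces the six coefficients to $U$, $V$, $W>0$ and turns each sign condition into an inequality in these three parameters, which makes the case analysis tractable.

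The main obstacle is precisely this last step: showing that the two irregular branches of Theorem~\ref{theorem: condition of the quartet 02} are absorbed by the condition of Lemma~\ref{lemma: condition of the quartet 01}, and correctly handling the boundary cases where $\Gamma_a$, $\Gamma_b$, $\Delta_{ab}$, or $(\mathds{M}\mathds{R}+2a_0a_2b_0b_2\Delta_{ab})$ vanishes, where the strict versus nonstrict distinctions in the three source results must be matched with care. This is an algebraic sign-chasing argument rather than a conceptual one, and the canonical reduction to $(U,V,W)$ together with the substitution identities recorded in Section~\ref{sec: A Canonical Biquadratic Form} is the tool I would rely on to keep it finite and verifiable.
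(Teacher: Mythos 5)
Your overall architecture is the paper's: reduce to the three configuration families via Lemma~\ref{lemma: all the possible quartets for one inductor and one capacitor}, obtain $\mathds{R}<0$ from Lemma~\ref{lemma: type of elements}, identify Fig.~\ref{fig: Quartet-01-different-kind} with Lemma~\ref{lemma: condition of the quartet 01}, and reconcile Theorems~\ref{theorem: condition of the quartet 02} and \ref{theorem: condition of the quartet 03} with regularity in the canonical coordinates $(U,V,W)$. The gap is in how you place alternatives~1 and 2 of Theorem~\ref{theorem: condition of the quartet 02}: you treat them as the potentially irregular branches and plan to show, for necessity, that they ``force the hypotheses of Lemma~\ref{lemma: condition of the quartet 01}.'' That sub-claim is false. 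Take $U=V=1$, $W=2$ in the canonical form: then $\mathds{R}_c=-1/4<0$, $\Gamma_{a_c}=\Gamma_{b_c}=-1/4<0$, $\Delta_{ab_c}=-1<0$, and $\lambda_c^{\ast}=1/2>0$ (i.e.\ $\mathds{E}_b>0$ with $\mathds{B}>0$), so alternative~1 of Theorem~\ref{theorem: condition of the quartet 02} holds; yet $\mathds{D}_a$, $\mathds{E}_b$, and $\mathds{R}+2a_2b_0\mathds{B}$ (i.e.\ $\lambda_c^{\ast\dag}W$, $\lambda_c^{\ast}$, and $\zeta_c^{\ast}=5/4$) are all positive, so the sign condition of Lemma~\ref{lemma: condition of the quartet 01} fails. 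What is actually true --- and what the paper uses with a one-line citation to \cite{JS11} --- is that every impedance realizable as Fig.~\ref{fig: Quartet-02-different-kind} or \ref{fig: Quartet-03-different-kind} is \emph{regular} (Fig.~\ref{fig: Quartet-02-different-kind} is equivalent by star-mesh transformation to the series-parallel network of Fig.~\ref{fig: network of No. 104}), so necessity goes through the ``regular'' disjunct, not through Lemma~\ref{lemma: condition of the quartet 01}.

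The same example breaks your sufficiency plan. You expect regularity with $\mathds{R}<0$ to land either in the sign pattern of Theorem~\ref{theorem: condition of the quartet 03} or in alternative~3 of Theorem~\ref{theorem: condition of the quartet 02}. For $U=V=1$, $W=2$ the impedance is regular and lies in $\mathcal{Z}_{b_c}$, but $\Gamma_{a}<0$, $\Gamma_{b}<0$, and $\mathds{M}\mathds{R}+2a_0a_2b_0b_2\Delta_{ab}<0$ all have the same sign, so Theorem~\ref{theorem: condition of the quartet 03} is inapplicable, and alternative~3 fails because $\Gamma_a<0$. The only available route is precisely through alternatives~1 and 2 of Theorem~\ref{theorem: condition of the quartet 02}, and it is regularity (in the form $\lambda_c^{\ast}>0$ or $\lambda_c^{\ast\dag}>0$ for $W>1$, i.e.\ $\mathds{E}_b>0$ or $\mathds{D}_a>0$) that supplies their remaining hypotheses. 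The paper's sufficiency proof is a three-way case split on the signs of $\Gamma_{a_c}$ and $\Gamma_{b_c}$; the case $\Gamma_{a_c}<0$ and $\Gamma_{b_c}<0$, where one first proves $\Delta_{ab_c}<0$ and then invokes alternatives~1/2, is exactly the case your outline omits. Both gaps stem from the single misconception that alternatives~1 and 2 are irregular; once you accept that they are regular and are needed to cover the regular impedances with $\Gamma_a<0$ and $\Gamma_b<0$, your framework can be repaired along the paper's lines.
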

\begin{proof}
\textit{Necessity.}
By Lemma~\ref{lemma: type of elements}, $\mathds{R} < 0$. It is shown in \cite{JS11} that biquadratic impedances that can realize
configurations in Figs.~\ref{fig: Quartet-02-different-kind} and
\ref{fig: Quartet-03-different-kind} must be regular.
Based on Lemma~\ref{lemma: all the possible quartets for one inductor and one capacitor}, the necessity part is proved.

\textit{Sufficiency.}
Based on Lemma~\ref{lemma: condition of the quartet 01}, one only needs to consider the case when $\mathds{R} < 0$ and $Z(s)$ is regular, which means that the corresponding $Z_c(s)$ is regular.
Assuming that the condition of Lemma~\ref{lemma: condition of at most four} does not hold, $Z_c(s)$ is regular if and only if (1) $\lambda_c > 0$ or $\lambda_c^{\dag} > 0$ when $W < 1$;
(2) $\lambda_c^{\ast} > 0$ or $\lambda_c^{\ast\dag} > 0$ when $W > 1$. It suffices to show that if $\mathds{R}_c < 0$ and $Z_c(s) \in \mathcal{Z}_{b_c}$ is regular then $Z_c(s)$ is realizable as one of the configurations in Figs.~\ref{fig: Quartet-02-different-kind} and \ref{fig: Quartet-03-different-kind}.

Case~1: $\Gamma_{a_c} < 0$ and $\Gamma_{b_c} < 0$. If $U < 1$ and $V < 1$, then $\Delta_{ab_c} = 4UV - 2(W+W^{-1}) < 0$. Suppose that $U \geq 1$.
Then, $\Gamma_{a_c} < 0$ implies that $V < (W+W^{-1})(U-\sqrt{U^2 - 1})/2$ or
$V > (W+W^{-1})(U+\sqrt{U^2 - 1})/2$, and $\Gamma_{b_c} < 0$ implies
$V < (4U^2 + (W+W^{-1})^2)/(4U(W+W^{-1}))$. Since
$2U(W+W^{-1})^2(U+\sqrt{U^2-1}) - ((W+W^{-1})^2 + 4U^2) > 4(2U(U+\sqrt{U^2-1})-1) - 4U^2 = 4(U^2 - 1) + 8U\sqrt{U^2 - 1}  > 0$, it is only possible that $V <  (W+W^{-1})(U-\sqrt{U^2 - 1})/2$.
Hence,
$\Delta_{ab_c} = 4 U V - 2 (W + W^{-1}) < 2 (W + W^{-1})(U^2 - U\sqrt{U^2 - 1}) - 2 (W + W^{-1}) =2 (W + W^{-1}) \sqrt{U^2 - 1}(\sqrt{U^2 - 1} - U) < 0$.
Making use of Theorem~\ref{theorem: condition of the quartet 02} and \eqref{eq: from Z to Zc}, $Z_c(s)$ is realizable as one of the configurations in Fig.~\ref{fig: Quartet-02-different-kind}.

Case~2: Only one of $\Gamma_{a_c}$ and $\Gamma_{b_c}$ is negative. By Theorem~\ref{theorem: condition of the quartet 03} and \eqref{eq: from Z to Zc}, $Z_c(s)$ is realizable as the configuration  in Fig.~\ref{fig: Quartet-03-different-kind}.

Case~3: $\Gamma_{a_c} > 0$ and $\Gamma_{b_c} > 0$. One obtains that $U > 1$ and $V > 1$. If $\Delta_{ab_c} > 0$,
then $Z_c(s)$ is realizable as one of the configurations in Fig.~\ref{fig: Quartet-02-different-kind} by Theorem~\ref{theorem: condition of the quartet 02} and \eqref{eq: from Z to Zc}. If $\Delta_{ab_c} \leq 0$, then
$\mathds{R}_c = - 4 U^2 - 4 V^2 + 4 U V (W+W^{-1}) - (W-W^{-1})^2 < 0$
yields
$- (W+W^{-1})^3 + 4UV(W+W^{-1})^2 - 4(U^2+V^2)(W+W^{-1}) + 8UV  < - (W+W^{-1})^3 + 4UV(W+W^{-1})^2 + ((W-W^{-1})^2-4UV(W+W^{-1}))(W+W^{-1}) + 8UV = -4(W+W^{-1}) + 8UV = 2 \Delta_{ab_c}  \leq 0$. Therefore, $Z_c(s)$ is realizable as the configuration in Fig.~\ref{fig: Quartet-03-different-kind}  based on Theorem~\ref{theorem: condition of the quartet 03} and \eqref{eq: from Z to Zc}.
\end{proof}

\begin{corollary}  \label{corollary: 02}
{A biquadratic impedance $Z(s) \in \mathcal{Z}_b$ with $\mathds{R} < 0$ is regular if and only if it is realizable as one of the configurations in Figs.~\ref{fig: Quartet-02-different-kind} and \ref{fig: Quartet-03-different-kind}.}
\end{corollary}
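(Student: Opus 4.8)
The plan is to establish the two implications separately, drawing almost entirely on the analysis already carried out for Theorem~\ref{theorem: main theorem Res < 0}. In effect this corollary isolates the ``regular'' branch of that theorem: realizability by the bridge configurations in Figs.~\ref{fig: Quartet-02-different-kind} and \ref{fig: Quartet-03-different-kind} should correspond exactly to regularity (under the standing hypothesis $\mathds{R}<0$), while the remaining configuration in Fig.~\ref{fig: Quartet-01-different-kind} accounts for the non-regular but still realizable case governed by Lemma~\ref{lemma: condition of the quartet 01}.

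For the ``if'' direction, suppose $Z(s)\in\mathcal{Z}_b$ is realizable as one of the configurations in Figs.~\ref{fig: Quartet-02-different-kind} and \ref{fig: Quartet-03-different-kind}. Since these networks contain one inductor and one capacitor, Lemma~\ref{lemma: type of elements} forces $\mathds{R}<0$, consistent with the hypothesis. Regularity then follows from the result of \cite{JS11}, which shows that any biquadratic impedance realizable by precisely these two configurations must be regular; this is the same fact invoked in the necessity part of Theorem~\ref{theorem: main theorem Res < 0}.

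For the ``only if'' direction, assume $Z(s)$ is regular with $\mathds{R}<0$, and pass to the canonical form $Z_c(s)\in\mathcal{Z}_{b_c}$, which is again regular with $\mathds{R}_c<0$. The argument is the case analysis in the sufficiency part of Theorem~\ref{theorem: main theorem Res < 0}, organized by the signs of $\Gamma_{a_c}$ and $\Gamma_{b_c}$ (whose signs agree with those of $\Gamma_a$, $\Gamma_b$ under the conversion \eqref{eq: from Z to Zc}). When both are negative one deduces $\Delta_{ab_c}<0$ and applies Theorem~\ref{theorem: condition of the quartet 02} to realize $Z_c(s)$ as a configuration in Fig.~\ref{fig: Quartet-02-different-kind}. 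When exactly one is negative, $\Gamma_a$ and $\Gamma_b$ have opposite signs, so the signs of $\Gamma_a$, $\Gamma_b$, and the converted quantity $\mathds{M}\mathds{R}+2a_0a_2b_0b_2\Delta_{ab}$ cannot all coincide, and Theorem~\ref{theorem: condition of the quartet 03} yields realization as Fig.~\ref{fig: Quartet-03-different-kind}. When both are positive one has $U>1$ and $V>1$, and a split on the sign of $\Delta_{ab_c}$ sends the subcase $\Delta_{ab_c}>0$ to condition~3 of Theorem~\ref{theorem: condition of the quartet 02} and the subcase $\Delta_{ab_c}\le0$ to Theorem~\ref{theorem: condition of the quartet 03}. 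Converting back through \eqref{eq: from Zc to Z} gives the claim for $Z(s)$.

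I expect the main obstacle to lie in the sign-control inequalities of the ``only if'' direction in the canonical variables, especially in the $\Delta_{ab_c}\le0$ subcase with $\Gamma_{a_c},\Gamma_{b_c}>0$: there one must show, using only $\mathds{R}_c<0$, that the converted discriminant-type quantity corresponding to $\mathds{M}\mathds{R}+2a_0a_2b_0b_2\Delta_{ab}$ is \emph{strictly} negative, so that its sign genuinely differs from those of $\Gamma_a$ and $\Gamma_b$ and the hypothesis of Theorem~\ref{theorem: condition of the quartet 03} is met (ruling out the degenerate equality case that would instead require $\Gamma_a\Gamma_b<0$). The ``if'' direction, by contrast, is not self-contained---it rests entirely on the external characterization of regularity for these configurations established in \cite{JS11}.
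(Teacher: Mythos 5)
Your proposal is correct and follows essentially the same route as the paper, which proves the corollary by pointing directly back to the proof of Theorem~\ref{theorem: main theorem Res < 0}: the ``if'' direction is the regularity fact from \cite{JS11} used in that theorem's necessity part, and the ``only if'' direction is exactly the three-case analysis on the signs of $\Gamma_{a_c}$ and $\Gamma_{b_c}$ in its sufficiency part. The obstacle you flag in the subcase $\Gamma_{a_c},\Gamma_{b_c}>0$ with $\Delta_{ab_c}\le 0$ is already resolved there, where the quantity corresponding to $\mathds{M}\mathds{R}+2a_0a_2b_0b_2\Delta_{ab}$ is shown to be strictly less than $2\Delta_{ab_c}\le 0$, so the hypothesis of Theorem~\ref{theorem: condition of the quartet 03} is met.
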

\begin{proof}
This corollary is obtained based on the proof of Theorem~\ref{theorem: main theorem Res < 0}.
\end{proof}

\begin{corollary}  \label{corollary: 03}
{A biquadratic impedance, which can be realized as an irreducible\footnote{An irreducible network means that it can never become equivalent to the one containing fewer elements.} five-element series-parallel network containing one inductor and one capacitor,  can always be realized as a five-element bridge network containing
one inductor and one capacitor.}
\end{corollary}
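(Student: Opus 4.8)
The plan is to reduce Corollary~\ref{corollary: 03} to the bridge characterization already proved in Theorem~\ref{theorem: main theorem Res < 0} and, more conveniently, to Corollary~\ref{corollary: 02}, using the regularity of five-element series-parallel realizations as the bridge between the two statements. The whole argument is a short chain of implications once the right intermediate fact is in place.

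First I would pin down that the given impedance lies in $\mathcal{Z}_b$. Since an irreducible five-element network can never be made equivalent to one with fewer elements, the impedance genuinely needs five elements; in particular it is not realizable with four or fewer, so it fails the four-element condition of Lemma~\ref{lemma: condition of at most four}, and moreover all six coefficients must be positive (a vanishing coefficient would already permit a realization with two reactive elements and two resistors). Hence $Z(s)\in\mathcal{Z}_b$. Because the realizing series-parallel network contains exactly one inductor and one capacitor, i.e. two reactive elements of different types, Lemma~\ref{lemma: type of elements} gives $\mathds{R} < 0$ at once.

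The key step is to show that $Z(s)$ is regular. Here I would invoke the result of \cite{JS11} that any biquadratic impedance realizable as a five-element series-parallel network is regular; combined with the previous paragraph, $Z(s)$ is then a regular impedance in $\mathcal{Z}_b$ with $\mathds{R} < 0$. If one does not wish to take this regularity statement verbatim from \cite{JS11}, the fallback is to enumerate the finitely many irreducible five-element series-parallel configurations containing one inductor and one capacitor in Ladenheim's list \cite{Lad48} and to verify regularity of each impedance directly, most cleanly after passing to the canonical form \eqref{eq: canonical form}.

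Finally I would apply Corollary~\ref{corollary: 02}: a biquadratic impedance in $\mathcal{Z}_b$ with $\mathds{R} < 0$ is regular if and only if it is realizable as one of the configurations in Figs.~\ref{fig: Quartet-02-different-kind} and \ref{fig: Quartet-03-different-kind}, each of which is a five-element bridge network containing one inductor and one capacitor; this delivers exactly the desired bridge realization (equivalently one may quote the ``regular'' branch of Theorem~\ref{theorem: main theorem Res < 0}). I expect the genuine content, and hence the main obstacle, to be concentrated in the regularity step: the membership in $\mathcal{Z}_b$, the sign of $\mathds{R}$, and the concluding appeal to Corollary~\ref{corollary: 02} are essentially bookkeeping, whereas establishing that no five-element series-parallel network with one inductor and one capacitor can realize a non-regular impedance is where the dependence on \cite{JS11} resides, and a self-contained proof would require the laborious enumeration over Ladenheim's configurations mentioned above.
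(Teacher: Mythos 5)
Your proposal is correct and follows essentially the same route as the paper: invoke the result of \cite{JS11} that five-element series-parallel realizations with one inductor and one capacitor force regularity, use irreducibility to place $Z(s)$ in $\mathcal{Z}_b$ with $\mathds{R}<0$, and conclude via the regular branch of Theorem~\ref{theorem: main theorem Res < 0} (equivalently Corollary~\ref{corollary: 02}). The extra detail you supply on membership in $\mathcal{Z}_b$ and the sign of $\mathds{R}$ is consistent with, and slightly more explicit than, the paper's one-line justification.
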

\begin{proof}
It has been proved in \cite{JS11} that if the biquadratic impedance $Z(s)$ in the form of \eqref{eq: biquadratic impedance} is realizable as a five-element series-parallel network containing  one inductor and one capacitor, then $Z(s)$ must be regular. Since the network is irreducible, it follows that $Z(s) \in \mathcal{Z}_b$ and $\mathds{R} < 0$. Hence, the conclusion directly follows  from Theorem~\ref{theorem: main theorem Res < 0}.
\end{proof}

\begin{corollary}
{If a biquadratic impedance $Z(s) \in \mathcal{Z}_b$ can be realized as a network containing one inductor, one capacitor, and at least three resistors, then the network will always be equivalent to a five-element bridge network containing one inductor and one capacitor.}
\end{corollary}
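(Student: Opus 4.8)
The plan is to reduce the statement to the realizability criterion of Theorem~\ref{theorem: main theorem Res < 0}. Since the claim asserts an \emph{equivalence}, it suffices to prove that $Z(s)$ is itself realizable as a five-element bridge network containing one inductor and one capacitor: the given network, realizing the same $Z(s)$, is then automatically equivalent to that bridge realization. Thus the whole task is to verify the two conditions of Theorem~\ref{theorem: main theorem Res < 0}, that $\mathds{R} < 0$ and that $Z(s)$ is either regular or satisfies the condition of Lemma~\ref{lemma: condition of the quartet 01}. The first is immediate: the hypothesized network carries one inductor and one capacitor, i.e.\ two reactive elements of different types, so Lemma~\ref{lemma: type of elements} forces $\mathds{R} < 0$. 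Note also that the hypothesis ``at least three resistors'' is not an extra restriction but a consequence of $Z(s) \in \mathcal{Z}_b$: by Lemma~\ref{lemma: condition of at most four} no realization with four or fewer elements exists, so with exactly one inductor and one capacitor the network is irreducible and must use three or more resistors.

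The substance is to establish the regularity alternative while allowing arbitrarily many resistors, so that the five-element results cannot be quoted directly. The main approach I would take is a resistor-reduction step: after extracting the single inductor and single capacitor, the remaining purely resistive part is a reciprocal three-port terminated by the $L$-port, the $C$-port, and the external port. Once the values of $L$ and $C$ are fixed, matching this three-port's contribution to the six coefficients of $Z(s)$ (up to one overall scale) leaves only three independent resistive degrees of freedom, so a three-resistor resistive structure should suffice to reproduce $Z(s)$. Any resulting five-element realization with one inductor and one capacitor is then either series-parallel or a bridge. In the series-parallel case, irreducibility (valid since $Z(s) \in \mathcal{Z}_b$) lets me invoke Corollary~\ref{corollary: 03} to obtain an equivalent bridge realization with one inductor and one capacitor; in the bridge case, Lemma~\ref{lemma: all the possible quartets for one inductor and one capacitor} already identifies it with one of the configurations in Figs.~\ref{fig: Quartet-01-different-kind}--\ref{fig: Quartet-03-different-kind}. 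Either way $Z(s)$ is realizable as a five-element bridge network containing one inductor and one capacitor, which is what is needed.

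The hard part is exactly this resistor-reduction (equivalently, bounding the number of resistors in a minimal realization), since a general reciprocal three-port need not be realizable with only three positive resistors. The argument must use that the target is biquadratic and lies in $\mathcal{Z}_b$ — so that $\mathds{R} < 0$ and the discriminants $\Delta_a$, $\Delta_b$, $\Gamma_a$, $\Gamma_b$ controlling the configurations of Theorems~\ref{theorem: condition of the quartet 02} and \ref{theorem: condition of the quartet 03} are constrained — to guarantee that the reduced three-resistor structure exists with positive, finite element values. A cleaner alternative, and probably the intended route, is to invoke from \cite{JS11} a direct characterization of all biquadratic impedances realizable with two reactive elements of different types, showing that each such impedance is regular unless it meets the condition of Lemma~\ref{lemma: condition of the quartet 01}; Corollary~\ref{corollary: 02} and Theorem~\ref{theorem: main theorem Res < 0} then close the argument with no explicit resistor count. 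In both routes the crux is the same: ruling out that extra resistors could realize a non-regular impedance lying outside the condition of Lemma~\ref{lemma: condition of the quartet 01}.
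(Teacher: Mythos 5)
Your overall skeleton matches the paper's: it suffices to show that $Z(s)$ itself is realizable as a five-element bridge network with one inductor and one capacitor, $\mathds{R}<0$ follows from Lemma~\ref{lemma: type of elements}, and the conclusion is then delivered by Theorem~\ref{theorem: main theorem Res < 0}. The genuine gap is exactly where you locate it: the reduction from ``one inductor, one capacitor, and arbitrarily many resistors'' to a five-element realization. Your dimension count (a reciprocal resistive three-port matched to the six coefficients up to scale leaves ``three independent resistive degrees of freedom, so a three-resistor resistive structure should suffice'') is not a proof --- as you yourself concede, positive, finite realizability of the required three-port does not follow from parameter counting, and nothing in the sketch rules out a $Z(s)\in\mathcal{Z}_b$ whose every realization with one inductor and one capacitor needs four or more resistors. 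Your fallback of quoting from \cite{JS11} a characterization of \emph{all} biquadratics realizable with two reactive elements of different types does not work either: the results of \cite{JS11} used in this paper concern five-element (three-resistor) networks, so they cannot be invoked until the resistor count has already been reduced, which is precisely the point at issue.

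The paper closes this gap by citing a theorem of Reichert \cite{Rei69}, which states that a biquadratic impedance realizable with one inductor, one capacitor, and an arbitrary number of resistors is realizable with one inductor, one capacitor, and at most three resistors. With that in hand, the resulting five-element network is either series-parallel (then Corollary~\ref{corollary: 03} applies) or already a bridge, and in both cases Theorem~\ref{theorem: main theorem Res < 0} yields the desired equivalent bridge realization containing one inductor and one capacitor --- which is the remainder of your argument. So your proposal is structurally correct but incomplete at its acknowledged crux; the missing ingredient is Reichert's theorem, not a new computation.
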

\begin{proof}
It can be proved by Theorem~\ref{theorem: main theorem Res < 0} and a theorem of Reichert \cite{Rei69}.
\end{proof}

\subsection{Summary and Notes}  \label{subsec: summary and notes}

\begin{theorem}  \label{theorem: final condition}
{A biquadratic impedance $Z(s) \in \mathcal{Z}_b$ is realizable as   a two-reactive five-element bridge network if and only if one of the following two conditions holds:
\begin{enumerate}
  \item[1.] $\mathds{R} > 0$,  and at least one of $(\mathds{R} - 4a_0b_2(a_1b_1+2\mathds{B}))$, $(\mathds{R} - 4a_2b_0(a_1b_1 - 2\mathds{B}))$ and $(\mathds{R} - 4a_0a_2b_0b_2)$ is nonnegative;
  \item[2.] $\mathds{R} < 0$, and $Z(s)$ is regular or satisfies the condition of Lemma~\ref{lemma: condition of the quartet 01}.
\end{enumerate}
}
\end{theorem}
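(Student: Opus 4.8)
The plan is to obtain the result as an immediate consequence of the two characterizations already established for the two structural subcases, namely Theorem~\ref{theorem: main theorem Res > 0} and Theorem~\ref{theorem: main theorem Res < 0}, together with one elementary exhaustiveness observation about the reactive elements.

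First I would record the essential dichotomy: any two-reactive five-element bridge network contains exactly two reactive elements, and these are either of the same type (both inductors or both capacitors) or of different types (one inductor and one capacitor). These two alternatives are exhaustive and mutually exclusive. Consequently, a biquadratic impedance $Z(s) \in \mathcal{Z}_b$ is realizable as a two-reactive five-element bridge network if and only if it is realizable as a five-element bridge network containing two reactive elements of the same type, \emph{or} as one containing one inductor and one capacitor.

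Next I would substitute the two equivalences. By Theorem~\ref{theorem: main theorem Res > 0}, realizability in the same-type case is equivalent to $\mathds{R} > 0$ together with at least one of $(\mathds{R} - 4a_0b_2(a_1b_1+2\mathds{B}))$, $(\mathds{R} - 4a_2b_0(a_1b_1 - 2\mathds{B}))$, and $(\mathds{R} - 4a_0a_2b_0b_2)$ being nonnegative, which is precisely condition~1. By Theorem~\ref{theorem: main theorem Res < 0}, realizability in the different-type case is equivalent to $\mathds{R} < 0$ together with $Z(s)$ being regular or satisfying the condition of Lemma~\ref{lemma: condition of the quartet 01}, which is precisely condition~2. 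Taking the disjunction established in the previous step then yields the stated equivalence.

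The only additional remark I would make for cleanliness is that, since $Z(s) \in \mathcal{Z}_b$, the condition of Lemma~\ref{lemma: condition of at most four} fails and in particular $\mathds{R} \neq 0$; hence exactly one of $\mathds{R} > 0$ and $\mathds{R} < 0$ holds. This confirms that conditions~1 and~2 are mutually exclusive (consistent, via Lemma~\ref{lemma: type of elements}, with the disjoint structural cases) and that the two cases together exhaust $\mathcal{Z}_b$. There is no genuine obstacle at this stage: all the substantive realizability analysis was discharged in the earlier lemmas and theorems, so the \emph{only point requiring care} is the exhaustiveness of the same-type/different-type split and the verbatim matching of the two theorems' conditions to conditions~1 and~2.
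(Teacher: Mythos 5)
Your proposal is correct and follows essentially the same route as the paper, which proves the theorem in one line by combining Theorems~\ref{theorem: main theorem Res > 0} and \ref{theorem: main theorem Res < 0}. The extra remarks you add (exhaustiveness of the same-type/different-type split and $\mathds{R} \neq 0$ for $Z(s) \in \mathcal{Z}_b$, hence mutual exclusivity of the two cases) are sound and merely make explicit what the paper leaves implicit.
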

\begin{proof}
Combining Theorems~\ref{theorem: main theorem Res > 0} and \ref{theorem: main theorem Res < 0} leads to the conclusion.
\end{proof}

Now, a corresponding result for general five-element bridge networks directly follows.

\begin{theorem}  \label{theorem: final corollary}
{A biquadratic impedance $Z(s) \in \mathcal{Z}_b$ is realizable as  a five-element bridge network if and only if $Z(s)$ satisfies the condition of Theorem~\ref{theorem: final condition} or is realizable as a configuration in Fig.~\ref{fig: Three-Reactive-Quartet}.\footnote{A necessary and sufficient condition for the realizability of Fig.~\ref{fig: Three-Reactive-Quartet} is presented in \cite[Theorem 7]{JS11}}
}
\end{theorem}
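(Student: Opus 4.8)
The plan is to reduce the unrestricted five-element bridge problem to the two cases already resolved in the paper: bridges with exactly two reactive elements and bridges with exactly three reactive elements. The first step is to pin down the admissible number of reactive elements in any five-element bridge network realizing a $Z(s)\in\mathcal{Z}_b$. The lower bound is immediate: $Z(s)$ is a genuine biquadratic of McMillan degree $2$, so at least two reactive elements are required; a realization with one or zero reactive elements yields an impedance of degree at most one. The upper bound comes from observing that a member of $\mathcal{Z}_b$ has a nonzero resistive part, so at least one resistor must appear, leaving at most four reactive elements.

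The second step rules out the four- and five-reactive cases, so that the reactive count is forced to be two or three. A five-element bridge with five reactive elements is an $LC$ bridge, whose impedance satisfies $\Re(Z(j\omega))\equiv 0$ and therefore cannot coincide with any $Z(s)\in\mathcal{Z}_b$. For a four-reactive, single-resistor bridge, the McMillan degree of the impedance is generically four; collapsing it to two forces pole–zero cancellations, which in turn make the network equivalent to one with at most four elements, contradicting $Z(s)\in\mathcal{Z}_b$ (recall that $\mathcal{Z}_b$ excludes, via Lemma~\ref{lemma: condition of at most four}, everything realizable with at most four elements). I expect this elimination to be the main obstacle, since one must verify that no accidental degeneracy of the bridge topology produces an irreducible four-reactive biquadratic realization; Lemma~\ref{lemma: topological structure} (no path or cut-set of a single reactive type joining $a$ and $a'$) together with the star--mesh reductions already used for Fig.~\ref{fig: network of No. 104} should close the remaining subcases.

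With the reactive count restricted to two or three, the third step assembles the two halves. The two-reactive case is precisely Theorem~\ref{theorem: final condition}. For the three-reactive case, an enumeration constrained by Lemma~\ref{lemma: topological structure} shows that, up to duality, the only admissible three-reactive five-element bridge configuration is the one in Fig.~\ref{fig: Three-Reactive-Quartet}, whose realizability condition is furnished by \cite[Theorem~7]{JS11} and the three-reactive analysis of \cite{Lad64}.

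Finally, taking the disjunction of the two cases yields the stated equivalence. For necessity, a five-element bridge realization of $Z(s)$ has either two reactive elements, in which case $Z(s)$ satisfies the condition of Theorem~\ref{theorem: final condition}, or three reactive elements, in which case $Z(s)$ is realizable as the configuration in Fig.~\ref{fig: Three-Reactive-Quartet}; the other reactive counts were eliminated above. Sufficiency is immediate, since each of the two listed alternatives already supplies an explicit five-element bridge realization. This completes the reduction and establishes the theorem.
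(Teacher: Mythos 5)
There are two genuine gaps in your necessity argument. First, your elimination of the four-reactive (single-resistor) bridge rests on the claim that forcing the McMillan degree down to two ``makes the network equivalent to one with at most four elements''; pole--zero cancellation in the impedance does not by itself yield such a network-level reduction, and you flag this yourself as the main obstacle. The paper avoids the issue entirely with a one-line argument: with only one resistor $R$, both $Z(0)$ and $Z(\infty)$ must equal $R$ (each is $0$, $\infty$, or $R$, and positivity of all six coefficients excludes the first two), so $a_0/b_0 = a_2/b_2$, i.e.\ $\mathds{B}=0$, which is condition~2 of Lemma~\ref{lemma: condition of at most four} and hence contradicts $Z(s)\in\mathcal{Z}_b$.

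Second, and more seriously, your three-reactive case asserts that an enumeration under Lemma~\ref{lemma: topological structure} leaves only the configuration of Fig.~\ref{fig: Three-Reactive-Quartet} up to duality. That is false: there are other admissible three-reactive five-element bridge configurations, and the paper handles them by citing \cite{Lad64} to show that any such network is equivalent either to Fig.~\ref{fig: Three-Reactive-Quartet} \emph{or} to a five-element series-parallel network with one inductor and one capacitor. In the latter subcase the impedance is regular with $\mathds{R}<0$, and Corollary~\ref{corollary: 03} is then needed to conclude that it is realizable as a \emph{two-reactive} bridge, i.e.\ that it satisfies the condition of Theorem~\ref{theorem: final condition}. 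Without this detour through Corollary~\ref{corollary: 03}, a $Z(s)$ realized by a three-reactive bridge not of the form Fig.~\ref{fig: Three-Reactive-Quartet} is not shown to satisfy either alternative in the theorem statement, so necessity does not go through. The skeleton of your reduction (bound the reactive count to two or three, then split) matches the paper, but these two steps need the paper's arguments to close.
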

\begin{figure}[thpb]
      \centering
      \includegraphics[scale=1.1]{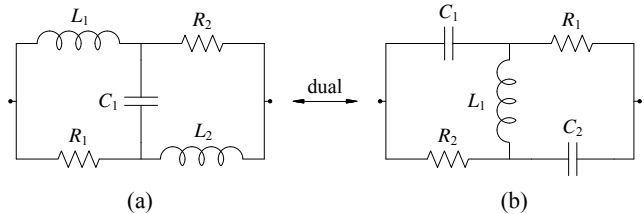}
      \caption{The three-reactive five-element bridge configurations in Theorem~\ref{theorem: final corollary}, which are respectively supported by two one-terminal-pair labeled graphs $\mathcal{N}_6$ and $\text{Dual}(\mathcal{N}_6)$. }
      \label{fig: Three-Reactive-Quartet}
\end{figure}
\begin{proof}
\textit{Sufficiency.} The sufficiency part is obvious.

\textit{Necessity.} Since $\mathds{R} \neq 0$, the \textit{McMillan degree} (see \cite[Chapter~3.6]{AV73}) of $Z(s) \in \mathcal{Z}_b$ satisfies $\delta(Z(s)) = 2$. Since the McMillan degree is equal to the minimal number of reactive elements for realizations of $Z(s)$ \cite[pg.~370]{AV73}, there must exist at least two reactive elements. Since $Z(s) \in \mathcal{Z}_b$ has no pole or zero on $j \mathbb{R} \cup \infty$, the number of reactive elements cannot be five. If the number of reactive elements is four (only one resistor), then $Z(0) = Z(\infty)$, which is equal to the value of the resistor. This means that $\mathds{B} = 0$, contradicting  the assumption that the condition of Lemma~\ref{lemma: condition of at most four} does not hold. Therefore, the number of reactive elements must be two or three.
If the number of reactive elements is two, then the condition of Theorem~\ref{theorem: final condition} holds.
If the number of reactive elements is three, then their types cannot be the same by Lemma~\ref{lemma: topological structure}. Furthermore, by the discussion in \cite{Lad64}, the network is equivalent to either a five-element series-parallel structure containing one inductor and one capacitor  or a configuration in Fig.~\ref{fig: Three-Reactive-Quartet}.
By Corollary~\ref{corollary: 03}, the theorem is proved.
\end{proof}

The condition of Theorem~\ref{theorem: final condition} can be converted into a condition in terms of the canonical form $Z_c(s) \in \mathcal{Z}_{b_c}$ through \eqref{eq: from Z to Zc}, which is  shown on the $U$--$V$ plane in Fig.~\ref{fig: UV} when $W = 2$.
If $(U,V)$ is within the shaded region (excluding  the inside curves $\Gamma_{a_c} \Gamma_{b_c} = 0$ and $\lambda_c^{\ast} \lambda_c^{\ast\dag} = 0$), then $Z_c(s)$ is realizable as   two-reactive five-element bridge networks. The hatched region ($\sigma_c < 0$) represents the non-positive-realness case, where $Z_c(s)$ cannot be realized as a passive network.

\begin{figure}[thpb]
      \centering
      \includegraphics[scale=0.55]{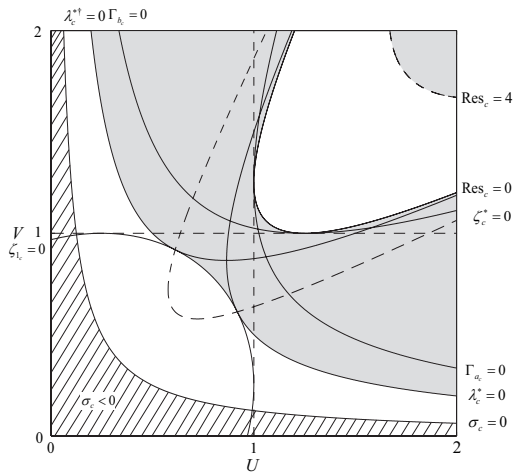}
      \caption{The $U$-$V$ plane showing a necessary and sufficient condition for any $Z_c(s) \in \mathcal{Z}_{b_c}$ to be realizable as the two-reactive five-element bridge network, when $W = 2$.}
      \label{fig: UV}
\end{figure}

Some important notes  are listed as follows.

\begin{remark}
{Ladenheim \cite{Lad48} only listed element values for configurations in Figs.~\ref{fig: Quartet-01-same-kind}--\ref{fig: Quartet-03-different-kind} without showing any detail of derivation. Neither explicit conditions   in terms of the coefficients of $Z(s)$  only (like the condition of Theorem~\ref{theorem: condition of the No. 85 network}) nor a complete set of conditions are  given in \cite{Lad48}. Besides, the special case when $R_1 = R_2$ for Fig.~\ref{fig: Quartet-02-same-kind}(a) (No.~86 configuration in \cite{Lad48}) is not discussed in \cite{Lad48}.}
\end{remark}

\begin{remark}
{In \cite{JS11},  necessary and sufficient  conditions are derived only for the realizability of bridge networks that sometimes cannot   realize regular biquadratic impedances. In the present paper, through discussing other two-reactive five-element bridge networks and by combining their conditions, a complete result is obtained (Theorem~\ref{theorem: final condition}). Together with previous results in \cite{Lad64}, the result has been further  extended to the general five-element bridge case (Theorem~\ref{theorem: final corollary}). }
\end{remark}

\begin{remark}
{Corollary~\ref{corollary: 02} shows that the regular biquadratic impedance $Z(s) \in \mathcal{Z}_b$ with $\mathds{R} < 0$ is always realizable as one of the three five-element bridge configurations in Figs.~\ref{fig: Quartet-02-different-kind} and \ref{fig: Quartet-03-different-kind}. It was shown in \cite{JS11} that a group of four five-element series-parallel networks can be used to realize such a function. Therefore, the number of configurations covering the case of regularity with $\mathds{R} < 0$ is reduced by one in the present paper.}
\end{remark}

\begin{remark}
The logical path of lemmas and theorems in this paper is as follows.
Theorem~\ref{theorem: final corollary} follows from Theorem~\ref{theorem: final condition} together with some results in the existing literature. Theorem~\ref{theorem: final condition} is the combination of Theorems~\ref{theorem: main theorem Res > 0} and \ref{theorem: main theorem Res < 0}, where Theorem~\ref{theorem: main theorem Res > 0} follows from Lemma~\ref{lemma: realization of Five-Element Bridge Networks with Two Reactive Elements of the Same Type} and Theorems~\ref{theorem: condition of the No. 85 network} and \ref{theorem: condition of the No. 86 network}, and Theorem~\ref{theorem: main theorem Res < 0} is derived  from Lemmas~\ref{lemma: type of elements}, \ref{lemma: all the possible quartets for one inductor and one capacitor}, and \ref{lemma: condition of the quartet 01} and Theorems~\ref{theorem: condition of the quartet 02} and \ref{theorem: condition of the quartet 03}.
The proof of Lemma~\ref{lemma: realization of Five-Element Bridge Networks with Two Reactive Elements of the Same Type} makes use of Lemma~\ref{lemma: topological structure}, the proof of Theorem~\ref{theorem: condition of the No. 85 network} makes use of Lemma~\ref{lemma: type of elements}, the proof of Theorem~\ref{theorem: condition of the No. 86 network} makes use of Lemmas~\ref{lemma: type of elements}, \ref{lemma: U V W lemma}, \ref{lemma: condition of the No. 86 network}, and \ref{lemma: condition of the No. 86' network}, and the proof of Theorem~\ref{theorem: condition of the quartet 02} makes use of Lemma~\ref{lemma: further condition of the No. 104 network}.
\end{remark}

\begin{remark}
The configurations of this paper can be connected to $RC$  low-pass filters.
By appropriately setting the values of components in the filters,
the frequency responses of the resulting networks can be adjusted, in order to better reject high-frequency noises and guarantee the low-frequency responses to approximate to those of the original configurations.
Applying passive network synthesis to the circuits with filter implementations needs to be further investigated.
\end{remark}

\section{Numerical Examples}    \label{sec: examples}

\begin{example}
{As shown in \cite{WHC12}, the function
\begin{equation}
Z_{e, 2}^{sy} = \frac{s^2 + 2.171\times 10^{8}s + 4.824\times 10^{9}}{1.632s^2 + 1.575\times 10^{8}s + 2.838\times 10^{8}}
\end{equation}
is the impedance of an external circuit in the machatronic suspension system, which optimizes the settling time at a certain velocity range and is realizable as a five-element series-parallel configuration in \cite[Fig.~18]{WHC12}. Since  $\mathds{R} > 0$, $\mathds{R} - 4a_0a_2b_0b_2 > 0$, and $\mathds{R} -  4a_2b_0(a_1b_1-2\mathds{B}) > 0$,
$Z_{e, 2}^{sy}$ satisfies the condition of Theorem~\ref{theorem: final condition}, so is realizable as a five-element bridge network.
Furthermore, $Z_{e, 2}^{sy}$ is realizable as Fig.~\ref{fig: Quartet-01-same-kind}(a) with $R_1 = 16.232~\Omega$, $R_2 = 0.637~\Omega$, $R_3 = 0.766~\Omega$, $C_1 = 0.0329~\text{F}$, and $C_1 = 1.411 \times 10^{-8}~\text{F}$, and  $Z_{e, 2}^{sy}$ is also realizable as Fig.~\ref{fig: Quartet-02-same-kind}(a) with $R_1 = 14.425~\Omega$, $R_2 = 1.378~\Omega$, $R_3 = 1.194~\Omega$, $C_1 = 4.157 \times 10^{-9}~\text{F}$, and $C_2 = 0.0355~\text{F}$. }
\end{example}

\begin{example}
{As shown in \cite{WCJS09}, the function
\begin{equation}
Z_{e, J_1}^{2nd} = \frac{1.665\times 10^5 s^2 + 5.776 \times 10^5 s + 5.466 \times 10^7}{s^2 + 1.544 \times 10^6 s + 0.342}
\end{equation}
is the impedance of an external circuit in the machatronic suspension system (LMIS3 layout), which optimizes $J_1$ (ride comfort) and is realizable as a five-element series-parallel configuration in \cite[Fig.~2(c)]{WCJS09}. It can be verified that $\mathds{R} < 0$ and
$Z_{e, J_1}^{2nd}$ is regular, implying that $Z_{e, J_1}^{2nd}$ is realizable as a five-element bridge network by Theorem~\ref{theorem: final condition}. Furthermore, $Z_{e, J_1}^{2nd}$ is realizable as Fig.~\ref{fig: Quartet-03-different-kind} with $R_1 = 1.598 \times 10^8~\Omega$, $R_2 = 0.374~\Omega$, $R_3 = 1.665 \times 10^5~\Omega$, $C_1 = 0.0282~\text{F}$, and $L_1 = 0.108~ \text{H}$.}
\end{example}

\section{Conclusion}    \label{sec: conclusion}
This paper has investigated the realization problem of biquadratic impedances as five-element bridge networks, where the biquadratic impedance was assumed to be not realizable with fewer than five elements.
Through investigating the realizability conditions of configurations  covering all the possible cases, a necessary and sufficient condition was derived for a biquadratic impedance to be realizable as a two-reactive five-element bridge network,  in terms of the coefficients only.
Through the discussions, a canonical form for biquadratic impedances was   utilized to simplify and combine the obtained conditions. Finally, a necessary and sufficient condition was obtained for the realizability of the biquadratic impedance as a general five-element bridge network.


\section*{Acknowledgment}

The authors are grateful to Professor Rudolf E Kalman for the enlightening discussion regarding this work.


\newpage

\begin{center}
{\LARGE Supplementary Material to: Realizations of Biquadratic Impedances as Five-Element Bridge Networks}
\end{center}







%
\IEEEpeerreviewmaketitle

\section*{I.~Introduction}
This report presents some supplementary material to  the paper entitled as ``Realizations of Biquadratic Impedances as Five-Element Bridge Networks'' \cite{CWLC17}.  For more background information of this field, refer to  \cite{Che07}--\cite{Smi02} and references therein.

\section*{II.~Definitions of the network duality}

Regardless of  the values of the elements, any one-port passive $RLC$ network $N$ can be regarded as a \textit{one-terminal-pair labeled graph} $\mathcal{N}$ with two distinguished \textit{terminal vertices} (see \cite[pg.~14]{SR61}), in which the labels designate passive circuit elements, namely resistors, capacitors, and inductors, which are labeled as $R_i$, $C_i$, and $L_i$, respectively.

Two natural maps acting on the labeled graph are defined as follows:
\begin{enumerate}
  \item $\text{GDu} :=$ Graph duality, which takes the one-terminal-pair graph into its dual (see \cite[Definition~3-12]{SR61}) while preserving the labeling.
  \item $\text{Inv}  :=$ Inversion, which preserves the graph but interchanges the reactive elements, that is, capacitors to inductors and inductors to capacitors with their labels $C_i$ to $L_i$ and $L_i$ to $C_i$.
\end{enumerate}
Consequently, one defines
\begin{equation*}
\text{Dual} :=  \text{network duality of one terminal-pair labeled graph} := \text{GDu} \circ \text{Inv} = \text{Inv} \circ \text{GDu}.
\end{equation*}

Consider a network $N$ whose one-terminal-pair labeled graph is $\mathcal{N}$. Denote $N^i$ as the network whose one-terminal-pair labeled graph is $\text{Inv}(\mathcal{N})$,  resistors are of the same values as those of $N$, and  inductors (resp. capacitors) are replaced by capacitors (resp. inductors) with reciprocal values. Denote $N^{id}$  as the network whose one-terminal-pair labeled graph is $\text{GDu}(\mathcal{N})$ and elements are of the reciprocal values to those of $N$. Denote $N^d$ as the network whose one-terminal-pair labeled graph is $\text{Dual}(\mathcal{N})$,   resistors  are of reciprocal values to those of $N$, and   inductors (resp. capacitors) are replaced by capacitors (resp. inductors) with same values.
Based on the mesh current and node voltage method, it can be proved that $Z(s)$ (resp. $Y(s)$) is realizable as the impedance (resp. admittance)
of a network $N$ whose one-terminal-pair labeled graph is $\mathcal{N}$,
if and only if $Z(s^{-1})$ (resp. $Y(s^{-1})$) is realizable as the impedance (resp. admittance) of $N^i$ whose one-terminal-pair labeled graph is $\text{Inv}(\mathcal{N})$, if and only if
$Z(s^{-1})$ (resp. $Y(s^{-1})$) is realizable as the admittance (resp. impedance) of $N^{id}$ whose one-terminal-pair labeled graph is $\text{GDu}(\mathcal{N})$, and if and only if
it is realizable as
the admittance (resp. impedance) of $N^d$ whose one-terminal-pair labeled graph is $\text{Dual}(\mathcal{N})$.


If a necessary and sufficient condition  is derived for  $H(s) =
\sum_{k=0}^m a_k s^k/\sum_{k=0}^m b_k s^k$ to be realizable as the impedance (resp. admittance) of the one-port network $N$ whose one-terminal-pair labeled graph is $\mathcal{N}$, then the corresponding condition for $N^i$ whose one-terminal-pair labeled
graph is $\text{Inv}(\mathcal{N})$ can be obtained from that for $N$ through conversion $a_k \leftrightarrow a_{m-k}$ and $b_k \leftrightarrow b_{m-k}$, $k \in \{0,1,...,\lfloor m/2 \rfloor \}$.
Consequently, the corresponding condition for $N^{id}$ whose one-terminal-pair labeled
graph is $\text{GDu}(\mathcal{N})$ can be obtained from that for $N$ through conversion $a_k \leftrightarrow b_{m-k}$, $k \in \{0,1,...,m\}$.
Furthermore, the corresponding condition for $N^d$  whose one-terminal-pair labeled graph is $\text{Dual}(\mathcal{N})$ can be obtained from that for $N$ through conversion $a_k \leftrightarrow b_k$,
$k \in \{0,1,...,m\}$
with the values of the elements for $N^d$ being obtained from those for $N$ through conversion
$R_i \rightarrow R_i^{-1}$, $C_i \rightarrow L_i$, $L_i \rightarrow C_i$, and
$a_k \leftrightarrow b_k$, $k \in \{0,1,...,m\}$.

\section*{III.~Proof of Lemma~4}

It follows from condition~(6) and the assumption of $W \neq 3$ that
$W > 3$. In order to simplify the proof of this lemma, one utilizes the coordinate  transformation
\begin{equation}  \label{eq: the coordinate revolution} \tag{III.1}
\begin{split}
U &= x \cos \frac{\pi}{4} - y \sin \frac{\pi}{4} = \frac{\sqrt{2}}{2} (x - y),  \\
V &= x \sin \frac{\pi}{4} + y \cos \frac{\pi}{4} =
\frac{\sqrt{2}}{2} (x + y),
\end{split}
\end{equation}
which does not affect the nature and proof of the lemma.
Thus, conditions~(5)--(7) are converted into the following:
\begin{align}
\Psi_1(x,y) &:= 2W^{-1}(W-1)^2 x^2
- 2W^{-1}(W+1)^2 y^2  - W^{-2}(W^2 - 1)^2   > 0,
\label{eq: equivalent condition 01} \tag{III.2} \\
\Psi_2(x,y) &:= 2W^{-1}(W+1)(W-3)x^2
- 2W^{-1}(W+3)(W-1)y^2 - W^{-2}(W^2-9)(W^2-1)  \geq 0,
\label{eq: equivalent condition 02} \tag{III.3}
\end{align}
and
\begin{equation} \label{eq: equivalent condition 03} \tag{III.4}
\begin{split}
\Sigma(x,y) :=& -4W^{-2}(W+3)(W+1)^3 y^4 + 8W^{-2}(W+3)(W+1)(W^2-2W-1)xy^3 \\
&+ 16W^{-1}(W^2-5)x^2y^2  -  8W^{-2}(W-1)(W-3)(W^2+2W-1)x^3y   \\
&+ 4W^{-2}(W-3)(W-1)^3x^4  -  2W^{-3}(W-1)(W+3)(W^2-3)(W+1)^2y^2 \\
&+  4W^{-3}(W^2-9)(W^2-1)^2xy  - 2W^{-3}(W-3)(W+1)(W^2-3) (W-1)^2x^2 > 0,
\end{split}
\end{equation}
where $x > 0$ and $W > 3$. Note that \eqref{eq: equivalent condition 01} and \eqref{eq: equivalent condition 02} are equivalent to
\begin{align}
x &> \frac{W+1}{W-1}\sqrt{\frac{2Wy^2 + (W-1)^2}{2W}} =: x_{\Psi_1}(y),
\label{eq: equivalent condition 01 x}  \tag{III.5}  \\
x &\geq \sqrt{\frac{(W+3)(W-1)(2Wy^2 + (W+1)(W-3))}{2W(W+1)(W-3)}} =: x_{\Psi_2}(y),
\label{eq: equivalent condition 02 x}  \tag{III.6}
\end{align}
respectively. Hence, it can be verified that
condition~\eqref{eq: equivalent condition 01 x} yields condition~\eqref{eq: equivalent condition 02 x} when
$y\in [-y_1, y_1]$, and condition~\eqref{eq: equivalent condition 02 x} yields   condition~\eqref{eq: equivalent condition 01 x} when
$y \in (-\infty, -y_1)\cup (y_1, +\infty)$, where
\begin{equation*}
y_1 = \frac{W-1}{2W}\sqrt{\frac{(W+1)(W-3)}{2}}.
\end{equation*}

Consider the case of $y \in [-y_1, y_1]$.
It suffices to show that condition~\eqref{eq: equivalent condition 01 x} implies condition~\eqref{eq: equivalent condition 03}. It can be verified that
\begin{equation}  \label{eq: partial diff Sigma} \tag{III.7}
\begin{split}
\Sigma_x(x,y) :=& \frac{\partial}{\partial x} \Sigma(x,y)  \\
=&  16W^{-2}(W-1)^3(W-3) x^3 -  24W^{-2}(W-1)(W-3)(W^2+2W-1)x^2y    \\
& +  32W^{-1}(W^2-5)x y^2  -  4W^{-3}(W-3)(W+1)(W^2-3)(W-1)^2 x   \\
& +  8W^{-2}(W+3)(W+1)(W^2-2W-1)y^3  + 4W^{-3}(W^2-1)^2(W^2-9) y,
\end{split}
\end{equation}
and
\begin{equation}  \label{eq: double partial diff Sigma} \tag{III.8}
\begin{split}
\Sigma_{xx}(x,y) :=& \frac{\partial^2}{\partial x^2} \Sigma(x,y)  \\
=&  48W^{-2}(W-1)^3(W-3) x^2  -  48W^{-2}(W-1)(W-3)(W^2+2W-1)xy    \\
&+  32W^{-1}(W^2-5)y^2
-  4W^{-3}(W+1)(W-1)^2(W^2-3)(W-3).
\end{split}
\end{equation}
Regarding \eqref{eq: double partial diff Sigma} as a quadratic function of $x$, its symmetric axis $x_s(y) = (W^2+2W-1)y/(2(W-1)^2)$ satisfies $x_s(y) < x_{\Psi_1}(y)$, as $4W(W-1)^4\left((x_s(y))^2 - (x_{\Psi_1}(y))^2\right) =
W(W^2+2W-1)^2y^2 - 2(W+1)^2(2Wy^2+(W-1)^2) = - W(3W^2+2W-3)(W^2-2W-1)y^2 - 2 (W+1)^2 (W-1)^4 < 0$.
Furthermore, one obtains
$\Sigma_{xx}(x_{\Psi_1}(y),y) = \Sigma_{xx1}(x_{\Psi_1}(y),y) -
\Sigma_{xx2}(x_{\Psi_1}(y),y)$,
where
\begin{align*}
\Sigma_{xx1}(x_{\Psi_1}(y),y) &= \frac{16(3W^4 - 4W^3 - 12W^2 - 4W + 9)y^2}{W^2} + \frac{4(W+1)(5W^2-3)(W-1)^2(W-3)}{W^3},  \\
\Sigma_{xx2}(x_{\Psi_1}(y),y) &= \frac{24\sqrt{2}  (W+1)(W-3)(W^2+2W-1)y}{W^2}\sqrt{\frac{2Wy^2+(W-1)^2}{W}}.
\end{align*}
It is obvious that $\Sigma_{xx1}(x_{\Psi_1}(y),y) > 0$. If
$y \in [-y_1, 0]$,  then $\Sigma_{xx2}(x_{\Psi_1}(y),y) \leq 0$, indicating  that $\Sigma_{xx}(x_{\Psi_1}(y),y) > 0$. If
$y \in (0, y_1]$,  then $\Sigma_{xx2}(x_{\Psi_1}(y),y) > 0$. One further obtains
$(\Sigma_{xx1}(x_{\Psi_1}(y),y)))^2 - (\Sigma_{xx2}(x_{\Psi_1}(y),y))^2 = - 2048W^{-3}(W^2-3W-2)(3W^4-2W^3-18W^2-8W+9)y^4 +  256W^{-4}(W+1)(W-3)(3W^5-19W^4+6W^3+86W^2+27W-39)(W-1)^2y^2 + 16W^{-6}(W-3)^2(W+1)^2(5W^2-3)^2(W-1)^4$,
which is positive for
$y \in (0, y_1]$. Hence, one concludes that $\Sigma_{xx}(x,y) > 0$ for $y \in [-y_1, y_1]$  and $x > x_{\Psi_1}(y)$, indicating that $\Sigma_{x}(x,y)$ increases  monotonically in $x$ when $x > x_{\Psi_1}(y)$ and $y$ is fixed for
$y \in [-y_1, y_1]$. For \eqref{eq: partial diff Sigma}, one has
$\Sigma_{x}(x_{\Psi_1}(y),y) = \Sigma_{x2}(x_{\Psi_1}(y),y)
- \Sigma_{x1}(x_{\Psi_1}(y),y)$,
where
\begin{equation*}
\Sigma_{x1}(x_{\Psi_1}(y),y) = \frac{16(W+1)(W^4-8W^2-8W+3)y^3}{W^2(W-1)} + \frac{8(W-1)(W-3)(W+2)(W+1)^2y}{W^2},
\end{equation*}
and
\begin{equation*}
\begin{split}
\Sigma_{x2}(x_{\Psi_1}(y),y) =&
\frac{2\sqrt{2}(W+1)}{W^3(W-1)}\sqrt{\frac{2Wy^2+(W-1)^2}{W}} \\
&\times \left( 4 W (W^4-4 W^2-8 W+3) y^2+(W-3) (W+1) (W^2+1) (W-1)^2 \right).
\end{split}
\end{equation*}
It can be verified that $\Sigma_{x2}(x_{\Psi_1}(y),y) > 0$. If
$y \in [-y_1, 0]$, then $\Sigma_{x1}(x_{\Psi_1}(y),y) \leq 0$, implying that $\Sigma_{x}(x_{\Psi_1}(y),y) > 0$. If
$y \in (0, y_1]$, then $\Sigma_{x1}(x_{\Psi_1}(y),y) > 0$. One obtains
\begin{equation} \label{eq: first sturm function} \tag{III.9}
\begin{split}
f(Y):=&(\Sigma_{x2}(x_{\Psi_1}(y),y))^2
- (\Sigma_{x1}(x_{\Psi_1}(y),y))^2    \\
=& \frac{2048(W+1)^2 S_1 Y^3}{W^2(W-1)^2}  -
\frac{256(W+1)^2S_2Y^2}{W^4}  \\
& +  16W^{-6}(W+1)^3(W-1)^2(W-3) S_3 Y
\\
& +  8W^{-7}(W+1)^4(W^2+1)^2(W-1)^4(W-3)^2,
\end{split}
\end{equation}
where $S_1  = W^4-6W^2-8W+3 > 0$, $S_2  = W^6-8W^5-5W^4+36W^3+63W^2-4W-3$, $S_3  = W^6-10W^5+15W^4+44W^3+39W^2-34W+9$, and $Y = y^2$.
The Sturm chain for  \eqref{eq: first sturm function} can be obtained through $f_0 (Y) = f(Y)$, $f_1 (Y) = f' (Y)$, $f_2 (Y) = - \text{rem} (f_0, f_1)$, and $f_3(Y) = -\text{rem} (f_1, f_2)$, where $\text{rem} (p_i, p_j)$   denotes the remainder of the polynomial long division of $p_i$ by $p_j$.
The sign of this chain at $Y = 0$ and $Y = Y_1 = y_1^2$ is as shown in Table~\ref{Signs of Sturm Chain} (the special case when $f_3(0) = f_3(Y_1) = -\infty$ is excluded, which does not affect the result), where $T = W^{12}+18W^{11}+18W^{10}-242W^9-685W^8+60W^7+3516W^6 +6260W^5+2407W^4-1422W^3+1234W^2-258W-27$ and $X = 3W^{10}+4W^9-47W^8-104W^7+54W^6 + 552W^5+1178W^4 +552W^3-1065W^2-1356W+549$, and
$V(Y)$ denote the number of sign variations   in the Sturm chain for \eqref{eq: first sturm function}. By investigating the roots of $S_3 = 0$, $T = 0$, and $X = 0$ in $W$ for $W > 3$, it is noted that $V(0) = V(Y_1)$. As shown in \cite[Chapter~XV]{Gan80}, the number of distinct roots of $f(Y)$ in $Y \in (0, Y_1)$  is $I_0^{Y_1} (f'(Y)/f(Y)) = V(0) - V(Y_1) = 0$. Together with $f(0) > 0$ and $f(Y_1) > 0$, it follows that $f(Y) > 0$ for $Y \in [0, Y_1]$, which further implies that $\Sigma_x (x_{\Psi_1}(y),y) > 0$ for $y \in [-y_1, y_1]$.
This means that $\Sigma(x,y)$ increases monotonically in $x > x_{\Psi_1}(y)$ for any $y \in [-y_1, y_1]$.  Substituting $x = x_{\Psi_1}(y)$ into $\Sigma(x,y)$ gives
$\Sigma(x_{\Psi_1}(y),y) = \Sigma_1(x_{\Psi_1}(y),y) - \Sigma_2(x_{\Psi_1}(y),y)$,
where

\begin{equation*}
\Sigma_1(x_{\Psi_1}(y),y) = -\frac{64(W+1)^2y^4}{W(W-1)^2}
+ \frac{8(W^2 - 4W - 3)(W+1)^2y^2}{W^3}
+ \frac{2(W+1)^3(W-1)^2(W-3)}{W^4},
\end{equation*}
and
\begin{equation*}
\Sigma_2(x_{\Psi_1}(y),y) =
\frac{4\sqrt{2}(W+1)^2y(-8W^2y^2 + (W+1)(W-3)(W-1)^2)}{W^3(W-1)^2}\sqrt{\frac{2Wy^2+(W-1)^2}{W}}.
\end{equation*}
It is also noted that $\Sigma_1(x_{\Psi_1}(y),y) \geq 0$ for
$y \in [-y_1, y_1]$.
Moreover, it is observed that if
$y \in (-y_1, 0)$, then $\Sigma_2(x_{\Psi_1}(y),y) \leq 0$, implying that $\Sigma(x_{\Psi_1}(y),y) \geq 0$. If
$y \in [0, y_1]$,   then one has
\begin{equation*}
\begin{split}
(\Sigma_1(x_{\Psi_1}(y),y))^2 - (\Sigma_2(x_{\Psi_1}(y),y))^2
=  4W^{-8}(W+1)^4(8W^2y^2 - (W+1)(W-3)(W-1)^2)^2
\geq 0.
\end{split}
\end{equation*}
Hence, $\Sigma(x_{\Psi_1}(y),y) \geq 0$ for $y \in [-y_1, y_1]$. As a result,  $\Sigma(x,y) > 0$ for
$y \in [-y_1, y_1]$ and $x > x_{\Psi_1}(y)$.

\begin{table}[!t]
\renewcommand{\arraystretch}{1.3}
\caption{The sign of the Sturm chain for \eqref{eq: first sturm function}.}
\label{Signs of Sturm Chain}
\centering
\begin{tabular}{|c||c|c|c|c||c|}
\hline $\text{sign}(f_i(Y))$ & $i=0$  &  $i=1$ & $i=2$ &  $i=3$ & Variations \\
\hline
$Y=0$ &  $+$ & $\text{sign}(S_3)$ &  $\text{sign}(-T)$ & $-$ & $V(0)$  \\
\hline
$Y=Y_1$ &$+$ & $+$ & $\text{sign}(-X)$ & $-$ & $V(Y_1)$ \\
\hline
\end{tabular}
\end{table}

Now, it remains to consider the case of $y \in (-\infty, -y_1)\cup(y_1, +\infty)$. In this case, one only needs to consider condition~\eqref{eq: equivalent condition 02 x}, that is, $x \geq x_{\Psi_2}(y)$, as it has been checked that $x_{\Psi_2}(y) > x_{\Psi_1}(y)$. It suffices to show that $\Sigma_{x}(x,y) > 0$ for $y \in (-\infty, -y_1)\cup(y_1, +\infty)$ and $x \geq x_{\Psi_2}(y)$, since it has been checked that $\Sigma(x_{\Psi_2}(y),y) > 0$ for $y \in (-\infty, -y_1)\cup(y_1, +\infty)$. These can be straightforwardly verified but the tedious detail is omitted for similarity of the presentation herein.

\ifCLASSOPTIONcaptionsoff
  \newpage
\fi

\end{document}